\documentclass[10pt,leqno]{amsart}
\usepackage[colorlinks,linkcolor=blue,citecolor=blue,urlcolor=blue]{hyperref}
\usepackage[final]{microtype}
\usepackage{amsfonts,amssymb}
\usepackage{color}
\usepackage{graphicx}
\usepackage{mathtools}
\usepackage{extarrows}
\usepackage{algorithmic}   
\usepackage{caption}
\usepackage{subcaption}
\usepackage{tikz}
\usepackage{mathrsfs}
\usepackage[isbn=false,maxbibnames=99]{biblatex}
\usepackage[normalem]{ulem}

\allowdisplaybreaks

\usepackage[noabbrev,nameinlink,capitalize]{cleveref}

\newtheorem{theorem}{Theorem}[section]
\newtheorem{lemma}[theorem]{Lemma}
\newtheorem{corollary}[theorem]{Corollary}
\newtheorem{assumption}[theorem]{Assumption}

\theoremstyle{definition}
\newtheorem{definition}[theorem]{Definition}
\newtheorem{example}[theorem]{Example}
\newtheorem{remark}[theorem]{Remark}

\def\cleverreffix#1{\AddToHook{env/#1/begin}{\crefalias{theorem}{#1}}}
\cleverreffix{lemma}
\cleverreffix{corollary}
\cleverreffix{assumption}
\cleverreffix{definition}
\cleverreffix{example}
\cleverreffix{remark}

\numberwithin{equation}{section}

\usepackage{my_latex_commands}

\newcommand{\lip}{\mathrm{Lip}}
\newcommand{\ran}{\mathrm{ran}}
\newcommand{\BL}{\mathrm{BL}}
\newcommand{\wdist}{\DD}
\newcommand\weaklystar{\stackrel{\star}{\rightharpoonup}}
\newcommand\polar{^\circ}
\newcommand\bipolar{^{\circ\circ}}
\newcommand\cl{\operatorname{cl}}
\DeclarePairedDelimiterX\eqclass[2]{\llbracket}{\rrbracket}{#1,#2}

\newcommand{\dualelement}{\Phi}

\newcommand\mathdh{\textit{\dh}}

\usepackage{enumerate}
\usepackage{enumitem}
\setenumerate{label=(\roman*)}

\def\itemref#1{\itemrefintern#1MYENDING}
\def\itemrefintern#1:#2:#3MYENDING{\cref{#1:#2}\ref{#1:#2:#3}}

\def\Itemrefintern#1:#2:#3MYENDING{\Cref{#1:#2}\ref{#1:#2:#3}}
\DeclarePairedDelimiterX\seq[1](){#1}
\DeclareMathAlphabet{\mathpzc}{OT1}{pzc}{m}{it}
\newcommand\oo{\mathpzc{o}}

\begin{document}
\title[
	Second-order conditions for problems with transport distances
]{
	No-gap second-order conditions for
	optimization problems involving transport distances
}

\author{Nicolas Borchard} \address{BTU Cottbus-Senftenberg, Fakult\"at 1, 
	Fachgebiet Optimale Steuerung, Platz der Deutschen Einheit 1, 03046 Cottbus, Germany}
\email{nicolas.borchard@b-tu.de}

\author{Christian Meyer} \address{Technische Universit\"at Dortmund, Fakult\"at f\"ur
	Mathematik, Lehrstuhl LSX, Vogelpothsweg 87, 44227 Dortmund, Germany}
\email{christian2.meyer@tu-dortmund.de}

\author{Gerd Wachsmuth} \address{BTU Cottbus-Senftenberg, Fakult\"at 1, 
	Fachgebiet Optimale Steuerung, Platz der Deutschen Einheit 1, 03046 Cottbus, Germany}
\email{gerd.wachsmuth@b-tu.de}

\thanks{This research has been funded by the Deutsche Forschungsgemeinschaft (DFG, German research foundation) via grant number 576024132.}

\subjclass[2020]{49K40}

\keywords{%
	no-gap second-order optimality conditions,
	transport distance,
	weak-$\star$ second subderivative%
}

\begin{abstract}
	We consider optimization problems
	in the space of measures.
	As a regularization term,
	the problem includes the transport distance
	to a given prior measure.
	For the derivation of second-order optimality conditions
	of no-gap type, the theory of
	weak-$\star$ second subderivatives is used which will lead to an equivalence
	with quadratic growth under additional assumptions on the smooth part of the objective and 
	on the Kantorovich potential, i.e., the solution of the dual transport problem. 
	Further, the weak-$\star$
	second subderivative is calculated and weak-$\star$ epidifferentiability is
	proven. Finally, the results are applied to optimal control problems in measure space.
\end{abstract}

\maketitle

\section{Introduction}

In this paper, we establish no-gap second-order optimality conditions for optimization problems
on the space of measures
with transport regularization.
The problem under consideration reads
\begin{equation*}
	\tag{P}
	\label{P}
	\min_{u \in \MM(\omega_1)}
	F(u) + \frac \alpha 2 D(u)
	,
\end{equation*}
where $D$ is the transport cost functional that measures the distance of $u \in \MM(\omega_1)$ 
to some prior measure $u_0 \in \MM(\omega_0), u_0 \ge 0$, 
w.r.t.\ the transportation cost $c : \omega_0 \times \omega_1 \to \R$, i.e., it is the optimal value of 
the following \emph{Kantorovich problem}:
\begin{equation}
	\label{def:D}
	D(u)
	:=
	\inf\set*{
		\int_{\omega_0 \times \omega_1} c(x,\eta) \d \pi(x,\eta)
		\given
		\pi \in \Gamma(u_0, u)
	}
	,
\end{equation}
where the set of feasible transport plans is given by
\begin{equation}
	\Gamma(u_0,u)
	:=
	\set*{
		\pi \in \MM(\omega_0 \times \omega_1)
		\given
		\pi \ge 0,
		P_0 \# \pi = u_0,
		P_1 \# \pi = u
	}
	.
\end{equation}
Herein, $\omega_0, \omega_1 \subset \R^d$ are given compact sets. 
Moreover, $\#$ denotes the measure-pushforward and $P_i \colon \omega_0 \times \omega_1 \to \omega_i$
is the projection on $\omega_i$, $i=0,1$.
Furthermore, $\alpha > 0$ and $F: \MM(\omega_1) \to \R$ is a smooth functional.
The precise assumptions on the data are given in \cref{subsec:problem_setting} below.

Optimization problems in the space of regular Borel measures have various applications, e.g., in 
inverse problems \cite{ScherzerWalch2009,BrediesPikkarainen2013,DuvalPeyre2015},
optimal sensor placement \cite{ClasonKunisch2012,NeitzelPieperVexlerWalter2019},
or sparse optimal control \cite{ClasonKunisch2011,PieperVexler2013}.
A motivation for considering the transport distance as regularizer instead of, for instance,
the usually employed Radon norm
lies in its advantageous continuity properties: In contrast to the Radon norm, the transport distance is 
sequentially weak-$\star$ continuous. Moreover, the consideration of $D$ might be motivated by modeling aspects.
For instance, the relocation of a resource $u_0$ to minimize the functional $F$, while 
at the same time keeping the transportation costs as small as possible,
directly leads to a problem of type \eqref{P}. 
Quite recently, problems of type \eqref{P}
have been considered in \cite{MeyerWachsmuth2025}.
Therein,
existence of solutions and first-order optimality conditions
were studied.
Moreover, it was shown that optimal controls
possess higher regularity
for specific choices of the functional $F$,
e.g., if $F$ is the reduced objective functional
of a linear-quadratic optimal control problem.

As mentioned above, we aim at deriving no-gap second-order optimality conditions meaning that we search 
for second-order conditions that are \emph{equivalent} to the quadratic growth of the 
objective in the neighborhood of a local minimizer. In the recent paper \cite{BorchardWachsmuth2024}, 
such conditions have been characterized in an abstract setting.
They involve two conditions, namely 
\begin{enumerate}[label=(\roman*)]
    \item the so-called \emph{non-degeneracy condition} and
    \item coercivity of the second derivative of the objective.
\end{enumerate}
Since one part of the objective in \cite{BorchardWachsmuth2024} is non-smooth as it is the case for 
the transport distance $D$, the definition of a ``second derivative'' is not immediate and relies 
on the concept of the \emph{weak-$\star$ second subderivative}.
The application of the general theory to our problem \eqref{P} poses two major challenges.
First, one has to verify the non-degeneracy condition to be able to apply the general theory at all.
Secondly, in order to obtain a manageable second-order condition, 
one has to precisely compute the weak-$\star$ second subderivative of the transport distance. 
Following an idea of \cite{WachsmuthWachsmuth2022}, we apply a duality argument 
for both tasks, based on the pre-conjugate of the transport distance $D$.
This approach is also used in \cite{WachsmuthWalter2024}, where a problem 
similar to \eqref{P} is investigated with the Radon norm as regularizer instead of the transport distance. 
In order to put the general theory into practice, the authors of \cite{WachsmuthWalter2024} 
embedded their problem into the dual of the space of continuously differentiable functions.
In this space, the non-degeneracy condition can be verified and the weak-$\star$ second 
subderivative of the Radon norm can be computed. As a results, one obtains quadratic 
growth of the objective in the norm of the dual of $C^1$. 
Here, we will pursue the same strategy and treat \eqref{P} as a problem in $C^1(\omega_1)\dualspace$.

Let us put our work into perspective. The derivation of no-gap second-order optimality conditions 
has a long tradition, especially in the context of optimal control problems. 
The classical smooth theory requires that the regularizer is twice Fr\'echet-differentiable 
and that the second derivative is a Legendre form, see, e.g., \cite{CasasTroeltzsch2012} and the references therein. 
The transport distance $D$ however is non-smooth so that the classical theory is not applicable. 
There are multiple contributions to second-order conditions for non-smooth infinite dimensional 
problems that are not of no-gap type. We only mention \cite{Casas2012,CasasWachsmuthWachsmuth2017}
addressing optimal control problems in absence of a regularization term.
A first contribution to no-gap second-order conditions was made in \cite{ChristofWachsmuth2018}, 
where a bang-bang control problem is analyzed. 
The underlying analysis is based on the concept of the weak-$\star$ second subderivative, which is well-known
in finite-dimensions, see, e.g., \cite[Definition~13.3]{RockafellarWets1998}, and has been adapted
to infinite dimensions in \cite{Do1992,Levy1993}.
Later on, the second-order analysis of \cite{ChristofWachsmuth2018} has been advanced and
adapted to non-smooth integral functionals in \cite{WachsmuthWachsmuth2022}, to 
spatio-temporally sparse problems in \cite{BorchardWachsmuth2024}, and to 
problems involving the Radon norm in \cite{WachsmuthWalter2024}.
At this point, we would like to mention that this abstract second-order theory
does not only provide quadratic growth in a neighborhood of a minimizer,
but it can also be used
to study stability properties \cite[Section~6]{WachsmuthWalter2025:1},
to perform a sensitivity analysis \cite{ChristofWachsmuth2019}
and
to verify fast local convergence of solution methods \cite{WachsmuthWachsmuth2025:1,WachsmuthWalter2025:1}.

To the best of our knowledge, the transport distance has not been considered in this context so far. 
This even concerns a second-order differentiability analysis of the transport distance alone. 
The only contribution in this direction is \cite[Theorem~1.4]{Caja-LopezDelgadinoKitagawa2026}, 
where a formula for the second-order variation of the transport distance is given 
under the assumption that the marginals and their variations
and the domain are sufficiently smooth and the transport costs are quadratic.
This result however is not sufficient for our purpose, since the verification of the non-degeneracy condition 
requires to consider \eqref{P} in $C^1(\omega)\dualspace$ and so, even if we assume that a local 
minimizer $\bar u$ is smooth enough, we need to be able to treat 
very irregular variations of $\bar u$.
Nevertheless, the second-order derivative
from \cite[Theorem~1.4]{Caja-LopezDelgadinoKitagawa2026}
coincides with the expression of the weak-$\star$ second subderivative
obtained in the present paper, see \cref{rem:comparison}.

\subsection*{Plan of the paper}
After this introduction, we present some preliminaries that will be used throughout the paper in \cref{sec:prelim}. 
In \cref{subsec:problem_setting}, we state our standing assumptions and some well-known facts 
about the Kantorovich problem. The abstract second-order theory underlying our analysis is 
presented in \cref{subsec:abstract_second_order}. In this section, we will also give a rigorous definition 
of the non-degeneracy condition and the concept of the weak-$\star$ second subderivative. 
In \cref{sec:NDC}, see in particular \cref{ex:counter_example_inequality},
we will see that 
the non-degeneracy condition cannot hold in $\MM(\omega_1)$ and, for that reason, we 
lift our problem \eqref{P} to the space $C^1(\omega_1)\dualspace$ as indicated above. 
In \cref{subsec:dual_C1} and \cref{subsec:lifted_problem}, we recall some properties of $C^1(\omega)\dualspace$ 
from \cite{WachsmuthWalter2024} and introduce the lifted problem, respectively.
\cref{sec:NDC} is then dedicated to the verification of the non-degeneracy condition. 
As mentioned above, it is based on the pre-conjugate of $D$ and, following an idea of 
\cite{WachsmuthWachsmuth2022}, we will establish a descent property of the pre-conjugate, which 
will ultimately imply the non-degeneracy condition. The derivation of this descent property 
requires an additional regularity assumption on the Kantorovich potential associated with 
the local minimizer, cf.~\cref{assu:regularity_assumptions_on_T} and \cref{assu:weaker_regularity_assumptions_on_T}.
Afterwards, in \cref{sec:second_subderivative}, we compute the weak-$\star$ second subderivative of $D$ 
under slightly more restrictive assumptions.
Again, we pursue the strategy of \cite{WachsmuthWalter2024} and argue by means of the 
second derivative of the pre-conjugate functional. Finally, in \cref{sec:second_order_condition}, 
we collect our findings to formulate no-gap second order conditions. Since we assume the second derivative 
of $F$ to be sequentially weak-$\star$ continuous, the non-degeneracy condition alone is 
enough to establish no-gap second-order conditions involving the weak-$\star$ second subderivative of $D$. 
If one assumes the more restrictive assumptions from \cref{sec:second_subderivative}, then the 
second-order conditions can be formulated in a more manageable form. 
The paper ends with an example from optimal control, where $F$ involves the solution operator 
of an elliptic partial differential equation.

\section{Preliminaries}
\label{sec:prelim}

Our original problem \eqref{P} is posed in the space of measures $\MM(\omega_1)$ which can be identified with $C(\omega_1)\dualspace$,
the dual space of the space of continuous functions on the compact set $\omega_1$.
However,
this space is not suitable for our purposes.
The problem is that we are interested in quadratic growth properties which cannot be obtained in the space of measures.
Thus, we work in the space $C^1(\omega_1)\dualspace$. This has multiple reasons.
First,
$C^1(\omega_1)\dualspace$ is actually bigger than $\MM(\omega_1)$,
see \Cref{subsec:dual_C1},
which means that we can still get all solutions from $\MM(\omega_1)$.
Second,
we need the non-degeneracy condition \eqref{NDC}
that will be introduced in \Cref{subsec:abstract_second_order}
and it turns out that the condition holds true in $C^1(\omega_1)\dualspace$, cf.\ \Cref{thm:NDC_D_inequality} and \Cref{lem:ndc_sufficient}.
On the other hand,
\eqref{NDC} will be proven using the descent lemma \Cref{lem:weak_descent_lemma}
and \Cref{ex:counter_example_inequality} shows
that there is no descent lemma in $\MM(\omega_1)$.
Moreover, one can directly argue that \eqref{NDC} cannot be satisfied in $\MM(\omega_1)$,
see \cref{ex:no_NDC_in_M}.

\subsection{Problem setting}
\label{subsec:problem_setting}

In this subsection, we state our standing assumptions
that will be tacitly assumed in the following.
Moreover, we fix the notation and recall basic results from optimal transport.

\begin{assumption}[Standing assumptions]
	\label{assu:standing_assumptions}
	We assume the following.
	\begin{enumerate}
		\item
			\label{assu:standing_assumptions:1}
			Let $\omega_0, \omega_1 \subset \R^d$ be compact sets, $d \in \N$.
	\item
			\label{assu:standing_assumptions:2}
		We assume that $\omega_1$ is the closure of its interior
		and
		that $\interior \omega_1$ is uniformly locally quasiconvex,
		see \cref{def:ulq}.
		\item
			\label{assu:standing_assumptions:3}
			The cost function $c \colon \omega_0 \times \omega_1 \to \R$ is continuous.
		\item
			\label{assu:standing_assumptions:4}
			The prior measure $u_0 \in \MM(\omega_0)$ satisfies $u_0 \ge 0$
			and $u_0(\omega_0) > 0$.
			Moreover, we assume $\supp(u_0) = \omega_0$.
		\item
			\label{assu:standing_assumptions:5}
			Finally, we fix
			$\bar u \in \MM(\omega_1)$ with $\bar u \ge 0$ and $\bar u(\omega_1) = u_0(\omega_0)$,
			which is a candidate for a minimizer.
	\end{enumerate}
\end{assumption}
Note that this situation is rather simple and many results are known for the theory of optimal transport in case of compact sets and continuous cost functions, e.g. in \cite[Thm.~1.4]{santambrogio}.
Further, we mention that the assumption
$\supp(u_0) = \omega_0$ is not restrictive,
since $\supp(u_0) \subset \omega_0$ is always a compact set
and, thus, we can shrink $\omega_0$
to satisfy this assumption.

In this paper, we use the following notations.
The positive real numbers are denoted by $\R^+ := (0,\infty)$
and we use
$\bar\R := \R \cup \set{\infty}$.
The closed ball in a Banach space $X$ with radius $\varepsilon > 0$ centered at $x \in X$ is denoted by $B_\varepsilon(x)$.

The set $C(\omega; \R^d)$ consists of all continuous functions from a compact set $\omega$ to $\R^d$
and $C(\omega) := C(\omega; \R^1)$.
The spaces of scalar and vector-valued Borel measures on $\omega$ are written down as $\MM(\omega)$
and $\MM(\omega; \R^d)$, respectively.
Recall that these are the dual spaces of $C(\omega)$ and $C(\omega; \R^d)$.
The space of Borel measurable functions whose $p$th power is $\mu$-integrable
is denoted by $L^p(\mu)$ for $\mu \in \MM(\omega)$.
Similarly, $L^p(\mu; \R^d)$ denotes the corresponding space of  vector-valued functions.

Lastly, we want to mention how the pushforward measure can be used for integral transformations.
Let $\mu \in \MM(\omega)$ be given and let $T \colon \omega \to \Omega$ be Borel measurable.
Then, the measure $T \# \mu$, which is defined via
\[
	(T \# \mu)(A) := \mu(T^{-1}(A))
	\qquad
	\text{for all Borel measurable sets } A \in \Omega,
\]
is a measure in $\MM(\Omega)$ and for every measurable function $f \colon \Omega \to \R$
the identity
\[
	\int_\Omega f(y) \d(T \# \mu)(y)
	=
	\int_\omega f(T(x)) \d\mu(x)
\]
holds.

Finally,
we note some properties of the transport distance $D$.
\begin{lemma}
	\label{lem:properties_D}
	The following properties hold.
	\begin{enumerate}
		\item
		\label{lem:properties_D:1}
		For all $u \in \MM(\omega_1)$,
		the values of the Kantorovich dual problem and primal problem coincide,
		i.e.,
		\begin{align*}
			\MoveEqLeft
			\sup_{\psi \in C(\omega_1)}
			\int_{\omega_1} \psi(\eta) \d u(\eta)
			+
			\int_{\omega_0} \psi^{\bar c}(x) \d u_0(x)
			\\
			&=
			\inf\set*{
				\int_{\omega_0 \times \omega_1} c(x,\eta) \d \pi(x,\eta)
				\given
				\pi \in \Gamma(u_0, u)
			}
			=
			D(u)
			,
		\end{align*}
		where the $\bar c$-conjugate is defined via
		\begin{align}
			\label{eq:cconjugate}
			\psi^{\bar c}(x)
			:=
			\inf_{\eta \in \omega_1} c(x,\eta) - \psi(\eta)
			\qquad\forall x \in \omega_0
			,
		\end{align}
		cf.\ \cite[Def.~1.10]{santambrogio}.
		\item
		\label{lem:properties_D:2}
		Let the cost function $c$ be given by $c(x,\eta) := \abs{x - \eta}^p$
		for $p > 1$
		and $u_0$ be absolutely continuous w.r.t.\ the Lebesgue measure on $\omega_0$.
		Then,
		$D$ is strictly convex.
	\end{enumerate}
\end{lemma}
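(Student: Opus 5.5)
Part \ref{lem:properties_D:1} is the classical Kantorovich duality for compact spaces and continuous costs. The case to handle separately is $u$ not admissible. If $u \ge 0$ and $u(\omega_1) = u_0(\omega_0)$, then $\Gamma(u_0,u)$ is nonempty: it contains the normalized product measure $u_0 \otimes u / u_0(\omega_0)$. In this case the duality $\min = \sup$ over pairs $(\varphi,\psi)$ with $\varphi(x)+\psi(\eta)\le c(x,\eta)$ is \cite[Thm.~1.39/1.42]{santambrogio}, including attainment of the primal infimum by weak-$\star$ compactness of $\Gamma(u_0,u)$. Replacing $\varphi$ by $\psi^{\bar c}$ can only increase the dual objective, since $\psi^{\bar c}\ge\varphi$ pointwise and $u_0 \ge 0$. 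Moreover $\psi^{\bar c}$ is continuous, because $c$ is uniformly continuous on the compact set $\omega_0\times\omega_1$. Hence the supremum may be taken over $\psi\in C(\omega_1)$ alone, with $\psi^{\bar c}$ as the partner.

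Next, suppose $u$ is not a nonnegative measure of mass $u_0(\omega_0)$. Then $\Gamma(u_0,u)=\emptyset$ and $D(u)=+\infty$, so we must show the supremum is $+\infty$. I use two test families.
\begin{itemize}
\item \emph{Mass mismatch.} Take $\psi \equiv t$ constant. Then $\psi^{\bar c}=\min_\eta c(\cdot,\eta)-t$, and the dual objective equals $t\,(u(\omega_1)-u_0(\omega_0))+\text{const}$, which tends to $+\infty$ as $t\to\pm\infty$.
\item \emph{Negative part.} Suppose $u$ has a negative part. By regularity there is $\psi\in C(\omega_1)$ with $\psi\le0$ and $\int\psi\,\d u>0$. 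Since $\psi^{\bar c}\ge \min c$, the dual objective for $t\psi$ is at least $t\int\psi\,\d u + u_0(\omega_0)\min c\to\infty$.
\end{itemize}
The only technical points are the existence of such a $\psi$, which follows from the Hahn decomposition together with Lusin/Urysohn approximation, and the continuity of $\psi^{\bar c}$.

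For part \ref{lem:properties_D:2}, convexity of $D$ is immediate. If $\pi_i\in\Gamma(u_0,u_i)$, then $\lambda\pi_1+(1-\lambda)\pi_2\in\Gamma(u_0,\lambda u_1+(1-\lambda)u_2)$, and the cost is linear in $\pi$. Alternatively, $D$ is a supremum of affine functionals by part \ref{lem:properties_D:1}.

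For strictness, take $u_1\ne u_2$, both admissible (otherwise the inequality is trivial or infinite), and $\lambda\in(0,1)$. Suppose equality holds in the convexity inequality. Then for optimal plans $\pi_i$, the mixture $\pi_\lambda=\lambda\pi_1+(1-\lambda)\pi_2$ is optimal for $u_\lambda=\lambda u_1+(1-\lambda)u_2$. Since $u_0\ll\mathcal L^d$ and $c=|x-\eta|^p$ with $p>1$ is twisted, Brenier–Gangbo–McCann (\cite[Thm.~1.17]{santambrogio}) gives that every optimal plan from $u_0$ is induced by a map, unique $u_0$-a.e. Thus $\pi_\lambda=(\mathrm{id},T)\#u_0$, and this plan is supported on a graph.

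But $\pi_1=(\mathrm{id},T_1)\#u_0$ and $\pi_2=(\mathrm{id},T_2)\#u_0$. Their mixture is supported on a graph only if $T_1=T_2$ $u_0$-a.e. That would give $u_1=T_1\#u_0=T_2\#u_0=u_2$, a contradiction. Hence the inequality is strict.

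The main obstacle is the applicability of the map theorem on $\omega_0$: the twist condition for $p>1$ and the requirement that $u_0$ not charge small sets. Both are provided by the hypotheses, using $\nabla_x c(x,\cdot)$ injective.
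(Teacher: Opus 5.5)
Your proposal is correct and follows the same route as the paper, whose proof consists only of the citations \cite[Thm.~1.46]{santambrogio} and \cite[Prop.~7.19]{santambrogio}; your reduction of the pair duality to $\bar c$-transforms, and your strict-convexity argument (mixing optimal plans and using that every optimal plan from $u_0$ is induced by a map, applied if needed on a large ball containing $\omega_0\cup\omega_1$ so that no condition on $\partial\omega_0$ is required), are the standard arguments behind those cited results. Your explicit treatment of measures $u$ with $\Gamma(u_0,u)=\emptyset$ is a worthwhile addition: there both sides equal $+\infty$, shown via constant potentials $\psi\equiv t$ and via $t\psi$ with $\psi\le 0$, $\int\psi\,\d u>0$, and this matters because the statement is claimed for all $u\in\MM(\omega_1)$ while the paper leaves this case to the citation.
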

\begin{proof}
	The claims are proven in
	\cite[Thm.~1.46]{santambrogio}
	and
	\cite[Prop.~7.19]{santambrogio}.
\end{proof}
Given $u \in \MM(\omega_1)$,
the solutions of the Kantorovich dual problem
(as stated in \itemref{lem:properties_D:1})
are called
Kantorovich potentials,
i.e., $\psi \in C(\omega_1)$ is a Kantorovich potential if and only if
\begin{equation}
	\label{eq:Kantorovich_potential_1}
	\int_{\omega_1} \psi(\eta) \d u(\eta)
	+
	\int_{\omega_0} \psi^{\bar c}(x) \d u_0(x)
	=
	D(u)
	.
\end{equation}

\subsection{Abstract second-order conditions}
\label{subsec:abstract_second_order}

In this subsection, we briefly introduce the framework developed in
\cite{ChristofWachsmuth2018,WachsmuthWachsmuth2022,BorchardWachsmuth2024}
that we will use to obtain second-order conditions
for problems of the form
\begin{equation}
	\label{eq:abstract_problem}
	\min_{x \in X} F(x) + G(x),
\end{equation}
where $X$ is a Banach space, and $F \colon X \to \R$, $G \colon X \to \bar\R$ are given.
The space $X$ is assumed to be the (topological) dual space of a separable Banach space $Y$.
Clearly, points outside of the set $\dom(G) = \set{x \in X \given G(x) < \infty}$
cannot be solutions of \eqref{eq:abstract_problem},
unless $G \equiv \infty$.
Consequently, the values of $F$ on $X \setminus \dom(G)$
do not play any role in the theory
and it would be sufficient of $F$ is just defined on $\dom(G)$,
as it is done in, e.g., \cite[Section~2]{WachsmuthWachsmuth2022}.

The functional $G$ is allowed to be nonsmooth
and
we introduce its weak-$\star$ second subderivative.
\begin{definition}[{\cite[Def.~2.2,~2.7]{WachsmuthWachsmuth2022}}]
	\label{def:wsss}
	Let $x \in \dom(G)$ and $w \in Y$ be given.
	Then, the \emph{weak-$\star$ second subderivative} $G''(x,w;\cdot) : X \to [-\infty, \infty]$ of $G$ at $x$ for $w$ is defined by
	\[
		G''(x,w;h)
		:=
		\inf\set*{
			\liminf_{k\to\infty} \frac{G(x + t_k h_k) - G(x) - t_k \dual{w}{h_k}}{t_k^2 / 2}
			\given
			t_k \searrow 0,
			h_k \weaklystar h
		}
		.
	\]
	Here, the infimum is taken w.r.t.\ all sequences
	$\seq{t_k} \subset \R^+$ and $\seq{h_k} \subset X$
	with
	$t_k \to 0$ and $h_k \weaklystar h$ in $X$.
	The functional $G$ is weak-$\star$ twice epi-differentiable
	(respectively strictly twice epi-differentiable, respectively strongly)
	at $x$ for $w$ in a direction $h \in X$, if for all $\seq{t_k} \subset \R^+$ with $t_k \to 0$
	there exists a sequence $\seq{h_k} \subset X$
	(called ``\emph{recovery sequence}'')
	satisfying
	$h_k \weaklystar h$ (respectively $h_k \weaklystar h$ and $\norm{h_k}_X \to \norm{h}_X$, respectively $h_k \to h$)
	and
	\[
		G''(x,w;h)
		=
		\lim_{k \to \infty} \frac{G(x + t_k h_k) - G(x) - t_k \dual{w}{h_k}}{t_k^2 / 2}
		.
	\]
	The functional $G$ is called weak-$\star$/strictly/strongly twice epi-differentiable at $x$ for $w$ if it is
	weak-$\star$/strictly/strongly twice epi-differentiable at $x$ for $w$ in all directions $h \in X$.
\end{definition}

Concerning the functional $F$,
we assume that it is twice differentiable at $\bar x$ in the following sense.
There exist $F'(\bar x) \in Y$ and a bounded bilinear form $F''(\bar x) \colon X \times X \to \R$
such that
\begin{equation}
	\label{eq:abstract_differentiable_F}
	\lim_{k \to \infty}
	\frac
	{F(\bar x + t_k h_k) - F(\bar x) - t_k F'(\bar x) h_k - \frac 1 2 t_k^2 F''(\bar x) h_k^2}
	{t_k^2}
	=
	0
\end{equation}
holds for all sequences $\seq{ t_k } \subset \R^+$ and $\seq{ h_k } \subset X$
with $t_k \to 0$, $h_k \weaklystar h$ and $\bar x + t_k h_k \in \dom(G)$.
Here,
$F''(\bar x) h_k^2 := F''(\bar x) [h_k, h_k]$.

Additionally, we need the following condition.

\begin{definition}[{\cite[Thm.~2.9]{WachsmuthWachsmuth2022}}]
	\label{def:ndc}
	Let $F'(\bar x) \in Y$ and
	a bounded linear functional
	$F''(\bar x) \colon X \times X \to \R$ be given.
	We say that the \emph{non-degeneracy condition} is satisfied
	if
	\begin{equation*}
		\tag{NDC}
		\label{NDC}
		\begin{split}
			\text{for all }
			\seq{t_k} \subset \R^+,
			\seq{h_k} \subset X
			\text{ with }
			t_k \to 0,
			h_k \weaklystar 0,
			\norm{h_k}_X = 1,
			\text{ we have}
			\\
			\liminf_{k\to\infty}
			\parens*{
			\frac{1}{t_k^2} \parens*{ G(\bar x + t_k h_k) - G(\bar x) }
			+
			\frac 1 {t_k} \dual{F'(\bar x)}{h_k}
			+
			\frac 1 2 F''(\bar x) h_k^2
			}
			>
			0
			.
		\end{split}
	\end{equation*}
\end{definition}

Now, we are in position to state the second-order optimality result.
\begin{theorem}[{\cite[Thm~2.20]{BorchardWachsmuth2024}}]
	\label{thm:abstract_second_order_result}
	Let $\bar x \in \dom(G)$ be given such that $F$ is twice differentiable at $\bar x$
	in the sense of \eqref{eq:abstract_differentiable_F}.
	Further, we assume that the map $X \ni h \mapsto F''(\bar x) h^2 \in \R$
	is sequentially weak-$\star$ continuous.
	Then, the following two conditions are equivalent.
	\begin{enumerate}
		\item
			There exist $\kappa,\varepsilon > 0$ such that
			the quadratic growth condition
			\begin{equation}
				\label{eq:abstract_QG}
				F(x) + G(x)
				\ge
				F(\bar x) + G(\bar x) + \frac \kappa 2 \norm{x - \bar x}_X^2
				\quad
				\forall x \in B_\varepsilon(\bar x)
			\end{equation}
			holds.
		\item
			The condition \eqref{NDC} and the following second-order sufficient condition
			\begin{align}
				F''(\bar x) h^2 + G''(\bar x, -F'(\bar x); h) > 0
				\quad
				\forall h \in X \setminus \set{0}
			\end{align}
			are satisfied.
	\end{enumerate}
\end{theorem}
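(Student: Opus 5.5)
We prove both implications by contradiction, working with normalized directions and splitting according to whether the weak-$\star$ limit of the normalized directions vanishes. Throughout, write $J := F + G$, and for $t_k \searrow 0$ and $\seq{h_k} \subset X$ set
\[
	q_k := \frac{G(\bar x + t_k h_k) - G(\bar x) + t_k \dual{F'(\bar x)}{h_k}}{t_k^2} + \frac12 F''(\bar x) h_k^2 .
\]
By \eqref{eq:abstract_differentiable_F}, $\bigl(J(\bar x + t_k h_k) - J(\bar x)\bigr)/t_k^2 = q_k + \oo(1)$ whenever $\bar x + t_k h_k \in \dom(G)$ and $h_k$ is weak-$\star$ convergent. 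Since $Y$ is separable, bounded sequences in $X$ have weak-$\star$ convergent subsequences. We use this repeatedly.

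\emph{(i)$\Rightarrow$(ii).} For \eqref{NDC}, take $t_k\to0$, $h_k \weaklystar 0$ with $\norm{h_k}_X = 1$. For large $k$ we have $\bar x + t_kh_k \in B_\varepsilon(\bar x)$, so \eqref{eq:abstract_QG} gives $J(\bar x + t_kh_k) - J(\bar x) \ge \frac\kappa2 t_k^2$. If $\bar x + t_kh_k\notin\dom(G)$, then $q_k=\infty$. Otherwise $q_k \ge \kappa/2 - \oo(1)$. Hence $\liminf q_k \ge \kappa/2>0$.

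For the second-order condition, fix $h\ne0$ and sequences $t_k\searrow0$, $h_k\weaklystar h$. We may pass to a subsequence realizing the $\liminf$ with finite value, so $\bar x+t_kh_k\in\dom(G)$. The sequence $h_k$ is bounded, so the growth condition applies, and weak-$\star$ lower semicontinuity of the norm yields
\[
	\liminf_{k} q_k \ge \frac\kappa2\liminf_k\norm{h_k}^2 \ge \frac\kappa2\norm{h}^2 .
\]
Since $q_k = \frac{G(\bar x+t_kh_k)-G(\bar x)+t_k\dual{F'(\bar x)}{h_k}}{t_k^2} + \frac12F''(\bar x)h_k^2$ and $F''(\bar x)h_k^2\to F''(\bar x)h^2$ by weak-$\star$ continuity, taking the infimum over all such sequences gives
\[
	\tfrac12\bigl(G''(\bar x,-F'(\bar x);h)+F''(\bar x)h^2\bigr)\ge\tfrac\kappa2\norm h^2>0 .
\]

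\emph{(ii)$\Rightarrow$(i).} Suppose quadratic growth fails. Then there are $x_k\to\bar x$, $x_k \ne \bar x$, with $J(x_k) < J(\bar x) + \frac1{2k}\norm{x_k-\bar x}^2$; in particular $x_k\in\dom(G)$. Set $t_k := \norm{x_k-\bar x}\to0$ and $h_k := (x_k-\bar x)/t_k$, so $\norm{h_k}=1$. After passing to a subsequence, $h_k\weaklystar h$. The contradiction assumption and \eqref{eq:abstract_differentiable_F} give $\limsup q_k\le 0$.

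\emph{Case $h=0$.} The sequence $(t_k, h_k)$ is admissible in \eqref{NDC}, which forces $\liminf q_k>0$, a contradiction.

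\emph{Case $h\neq0$.} Here $F''(\bar x)h_k^2\to F''(\bar x)h^2$, so by the definition of the weak-$\star$ second subderivative,
\[
	G''(\bar x,-F'(\bar x);h)\le \liminf_k 2\Bigl(q_k-\tfrac12F''(\bar x)h_k^2\Bigr) \le -F''(\bar x)h^2 .
\]
This contradicts the second-order sufficient condition.

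The main subtlety is the role of \eqref{NDC}. In the necessity direction, growth with respect to the norm, combined with lower semicontinuity of the norm, only controls $\norm h$ and not $\norm{h_k}$. The normalization $\norm{h_k}=1$ in \eqref{NDC} is exactly what compensates for this loss in the sufficiency direction, handling the case where the normalized directions converge weak-$\star$ to zero. Sequential weak-$\star$ continuity of $h \mapsto F''(\bar x)h^2$ is needed in both directions to pass the quadratic term to the limit. A minor technical point is that \eqref{eq:abstract_differentiable_F} is only assumed along sequences in $\dom(G)$, so points with $G=\infty$ must be discarded, as done above.
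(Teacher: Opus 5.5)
Your proof is correct. It is the standard argument behind \cite[Thm~2.20]{BorchardWachsmuth2024}, going back to \cite{ChristofWachsmuth2018,WachsmuthWachsmuth2022}; the paper only cites that result and gives no proof of its own. Concretely, you use a contradiction argument with normalized directions $h_k = (x_k-\bar x)/\norm{x_k-\bar x}_X$, weak-$\star$ sequential compactness from separability of $Y$, and a case split: $h=0$ is excluded by \eqref{NDC} and $h\neq 0$ by the second-order condition, while the necessity part uses the growth bound, weak-$\star$ lower semicontinuity of the norm, and weak-$\star$ continuity of $h\mapsto F''(\bar x)h^2$.
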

At this point we mention that
it is sufficient to assume that
$h \mapsto F''(\bar x) h^2$ is sequentially weak-$\star$ lower semicontinuous
if the functional $G$ is strongly twice epi-differentiable at $\bar x$
for $-F'(\bar x)$.
However, in our situation, this is not applicable since $D$ is not
expected to be strongly twice epi-differentiable.

Finally, we recall a result which can be used to verify \eqref{NDC}.
\begin{lemma}[{\cite[Lem.~2.12]{WachsmuthWachsmuth2022}}]
	\label{lem:ndc_sufficient}
	Suppose that $h \mapsto F''(x) h^2$ is
	sequentially weak-$\star$ lower semicontinuous
	and that there exist $c, \varepsilon > 0$
	such that
	\[
		G(x) - G(\bar x) + F'(\bar x)(x - \bar x)
		\ge
		\frac c 2 \norm{x - \bar x}_X^2
		\qquad
		\forall x \in B_\varepsilon(\bar x)
		.
	\]
	Then, \eqref{NDC} is satisfied.
\end{lemma}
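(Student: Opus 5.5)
We verify \eqref{NDC} directly from its definition. Fix sequences $\seq{t_k} \subset \R^+$ and $\seq{h_k} \subset X$ with $t_k \to 0$, $h_k \weaklystar 0$ and $\norm{h_k}_X = 1$, and set $x_k := \bar x + t_k h_k$. Since $\norm{x_k - \bar x}_X = t_k \to 0$, we have $x_k \in B_\varepsilon(\bar x)$ for all sufficiently large $k$. The assumed growth inequality at $x = x_k$ then reads
\[
	G(\bar x + t_k h_k) - G(\bar x) + t_k \dual{F'(\bar x)}{h_k}
	\ge
	\frac c 2 t_k^2 \norm{h_k}_X^2
	=
	\frac c 2 t_k^2 .
\]
Dividing by $t_k^2 > 0$, the first two terms of the expression in \eqref{NDC} are bounded from below by $c/2$ for all large $k$.

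It remains to control the curvature term $\frac 1 2 F''(\bar x) h_k^2$. Here I use the sequential weak-$\star$ lower semicontinuity of $h \mapsto F''(\bar x) h^2$ along $h_k \weaklystar 0$. Together with bilinearity, which gives $F''(\bar x) 0^2 = 0$, this yields
\[
	\liminf_{k \to \infty} F''(\bar x) h_k^2 \ge 0 .
\]
Combining the two estimates via superadditivity of $\liminf$ gives
\[
	\liminf_{k\to\infty}
	\parens*{
		\frac{1}{t_k^2} \parens*{ G(\bar x + t_k h_k) - G(\bar x) }
		+
		\frac 1 {t_k} \dual{F'(\bar x)}{h_k}
		+
		\frac 1 2 F''(\bar x) h_k^2
	}
	\ge
	\frac c 2 + 0 > 0,
\]
which is exactly \eqref{NDC}.

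There is no real obstacle here. The only points requiring care are that the growth inequality is applied only for $k$ large enough that $x_k$ lies in the ball, which is harmless for a $\liminf$, and that the lower semicontinuity is used at the limit point $0$, where the value of the quadratic form is $0$. If some $F(\bar x + t_k h_k)$ or $G$ values are $+\infty$, the first term is $+\infty$ and the inequality holds trivially.
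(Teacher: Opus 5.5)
Your proof is correct and is the standard direct verification behind this lemma; the paper gives no proof of its own and only cites \cite[Lem.~2.12]{WachsmuthWachsmuth2022}. In that argument, the growth inequality at $\bar x + t_k h_k$ bounds the first two terms of \eqref{NDC} below by $c/2$ for large $k$, and sequential weak-$\star$ lower semicontinuity at $0$ gives $\liminf_{k\to\infty} F''(\bar x) h_k^2 \ge 0$. Your closing remark about $F$ taking the value $+\infty$ is moot, since $F$ is real-valued and does not appear in \eqref{NDC}, but it does no harm.
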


At the moment, this theory is formulated with abstract spaces $X$ and $Y$.
In order to apply the theory to our problem \eqref{P},
we need to find suitable spaces.
As the transport distance $D$
(which takes the role of $G$, up to scaling with $\frac\alpha2$)
is defined on $\MM(\omega_1)$,
$Y = C(\omega_1)$ (resulting in $X = Y\dualspace = \MM(\omega_1)$) seems to be a natural choice.
However, we briefly argue that the quadratic growth \eqref{eq:abstract_QG}
and, consequently, \eqref{NDC},
cannot be satisfied in this setting under a mild assumption on $F$.

\begin{example}
	\label{ex:no_NDC_in_M}
	Assume that the functional $F \colon \MM(\omega_1) \to \R$
	satisfies the assumptions of \cref{thm:abstract_second_order_result}
	and, additionally,
	suppose that $F$
	is sequentially weak-$\star$ continuous on $\MM(\omega_1)$.

	For an arbitrary $k \in \N$,
	we partition $\R^d$ into cubes of side length $k^{-1}$, i.e.,
	$\R^d = \bigcup_{j\in \N} A_{k,j}$,
	where $A_{k,j} = k^{-1}( [0,1)^d + z_j )$
	and $\set{z_j}_{j \in \N}$ is an enumeration of  $\Z^d$.

	Consider $k,j \in \N$ such that $\omega_1 \cap A_{k,j} \ne \emptyset$.
	Using that $\omega_1$ is the closure of its interior,
	we can find $x_{k,j} \in \omega_1$
	such that
	$\dist( x_{k,j}, A_{k,j} ) \le k^{-1}$
	and
	$\bar u(\set{x_{k,j}}) = 0$.

	We define
	\begin{equation*}
		u_k
		:=
		\sum_{j \in \N, \omega_1 \cap A_{k,j} \ne \emptyset} \bar u( \omega_1 \cap A_{k,j} ) \delta_{x_{k,j}}
		,
	\end{equation*}
	where $\delta_{x_{k,j}}$ is the Dirac measure at $x_{k,j}$.
	Note that $u_k$ is a nonnegative measure on $\omega_1$
	with $u_k(\omega_1) = \bar u(\omega_1) = u_0(\omega_0)$.
	Using the uniform continuity of continuous functions on the compact set $\omega_1$, one readily verifies that 
	$u_k \weaklystar \bar u$ as $k \to \infty$.
	Since the nonnegative measures $\bar u$ and $u_k$
	have disjoint support and equal mass,
	we directly get
	$ \norm{ \bar u - u_k }_{\MM(\omega_1)} = 2 \bar u(\omega_1)$.
	Now, we choose $\varepsilon \in (0, 2 \bar u(\omega_1))$
	and consider
	$u_{\varepsilon,k} := \bar u + \frac{\varepsilon}{2 \bar u(\omega_1)} (u_k - \bar u)$.
	This directly yields
	that $u_{\varepsilon,k}$ is a nonnegative measure on $\omega_1$,
	$u_{\varepsilon, k}(\omega_1) = u_0(\omega_0)$,
	$u_{\varepsilon,k} \weaklystar \bar u$ as $k \to \infty$
	and
	$\norm{ \bar u - u_{\varepsilon, k} }_{\MM(\omega_1)} = \varepsilon$.

	The sequential weak-$\star$ continuity of $F$ implies $F(u_{\varepsilon, k}) \to F(\bar u)$.
	From \cite[Thm.~1.51]{santambrogio}
	we get $D(u_{\varepsilon, k}) \to D(\bar u)$.
	In total,
	$F(u_{\varepsilon, k}) + \frac \alpha 2 D(u_{\varepsilon, k}) \to F(\bar u) + \frac \alpha 2 D(\bar u)$
	and, by construction, we have
	$\norm{u_{\varepsilon, k} - \bar u}_{\MM(\omega_1)} = \varepsilon$.
	From these two observations, it easily follows that the quadratic growth condition
	\eqref{eq:abstract_QG}
	(with $G = \frac\alpha2 D$)
	cannot be satisfied for any constants $\kappa,\varepsilon > 0$
	in the space $X = \MM(\omega_1)$.
	Consequently, \cref{thm:abstract_second_order_result}
	implies that
	the condition \eqref{NDC} is violated as well.
\end{example}
In turn,
we need to weaken the space $X$ and we will see that the choices
$Y = C^1(\omega_1)$ and $X = C^1(\omega_1)\dualspace$ are suitable.

\subsection{The dual space of \texorpdfstring{$C^1(\omega_1)$}{C1(omega\_1)}}
\label{subsec:dual_C1}
In this subsection, we recall some facts from \cite[§5.2]{WachsmuthWalter2024}.
The assumption on $\omega_1$ to be uniformly locally quasiconvex is important
for some of the nice properties of the dual space $C^1(\omega_1)\dualspace$,
e.g., \Cref{lem:BLnorm_C1*norm_equivalent} could fail to be true.
We start with its definition.

\begin{definition}[{\cite[Def.~5.1]{WachsmuthWalter2024}}]
	\label{def:ulq}
	A set $A \subset \R^d$ is called
	\emph{uniformly locally quasiconvex},
	if
	there exist $r > 0$ and $C \ge 1$
	such that for all $x,y \in A$ with $\abs{x-y} \le r$,
	there exists a curve $\gamma \in C([0,1];\omega_1)$ with
	$\gamma(0) = x, \gamma(1) = y$ and
	Lipschitz constant at most $C \abs{x - y}$,
	i.e., $\abs{\gamma(s) - \gamma(t)} \le C \abs{x - y} \abs{s - t}$
	for all $s,t \in [0,1]$.
\end{definition}

We define the space $C^1(\omega_1)$ as the space of
all functions $\varphi \in C(\omega_1)$
that are continuously differentiable on $\interior(\omega_1)$
such that $\nabla\varphi$ can be continuously extended from $\interior(\omega_1)$ to $\omega_1$,
cf. \cite[p.~5]{WachsmuthWalter2024}.
Equipped with the norm
\begin{equation*}
	\norm{ \varphi }_{C^1(\omega_1)}
	:=
	\max\set{
		\norm{ \varphi }_{C(\omega_1)}
		,
		\norm{ \nabla\varphi }_{C(\omega_1; \R^d)}
	}
	,
\end{equation*}
$C^1(\omega_1)$ becomes a Banach space.
On order to describe the dual space of $C^1(\omega_1)$,
we use the identification
\begin{equation*}
	C^1(\omega_1)
	\cong
	\CC
	:=
	\set{(\varphi, \nabla \varphi) \given \varphi \in C^1(\omega_1)}
	\subset
	C(\omega_1) \times C(\omega_1; \R^d)
	.
\end{equation*}
Note that this implies the separability of $C^1(\omega_1)$.
If we equip this latter product space with the norm
\begin{equation*}
	\norm{(\varphi,\phi)}_{C \times C^d}
	:=
	\max\set{\norm{\varphi}_{C(\omega_1)}, \norm{\phi}_{C(\omega_1;\R^d)}}
	\qquad
	\forall (\varphi, \phi) \in C(\omega_1) \times C(\omega_1;\R^d)
	,
\end{equation*}
the map
$C^1(\omega_1) \ni \varphi
\mapsto
(\varphi, \nabla \varphi) \in C(\omega_1) \times C(\omega_1; \R^d)$
becomes an isometry.
It is easy to see that the dual space of $C(\omega_1) \times C(\omega_1; \R^d)$ is
$\MM(\omega_1) \times \MM(\omega_1; \R^d)$ with the duality pairing
\begin{equation*}
	\dual{(\mu,\nu)}{(\varphi,\phi)}_{C \times C^d}
	:=
	\dual{\mu}{\varphi}_{C}
	+
	\dual{\nu}{\phi}_{C^d}
	:=
	\dual{\mu}{\varphi}_{C(\omega_1)}
	+
	\dual{\nu}{\phi}_{C(\omega_1;\R^d)}
	.
\end{equation*}
Due to a standard result from functional analysis,
the dual space of $C^1(\omega_1)$ can be identified with
a quotient space of $\MM(\omega_1) \times \MM(\omega_1; \R^d)$.

\begin{lemma}[{\cite[Prop.~11.10]{Brezis2011}, \cite[Lemma~5.7]{WachsmuthWalter2024}}]
	The dual space of $C^1(\omega_1)$ satisfies
	\begin{equation}
		(\MM(\omega_1) \times \MM(\omega_1; \R^d)) / \CC^\bot
		\cong
		C^1(\omega_1)\dualspace
		,
	\end{equation}
	where the isometric isomorphism is given by
	\begin{equation*}
		(\MM(\omega_1) \times \MM(\omega_1; \R^d)) / \CC^\bot
		\ni
		(\mu,\nu) + \CC^\bot
		\mapsto
		\parens*{
			C^1(\omega_1) \ni \varphi \mapsto \dual{\mu}{\varphi}_C + \dual{\nu}{\nabla \varphi}_{C^d}
		}
	\end{equation*}
	and the annihilator $\CC^\bot$ is defined via
	\[
		\CC^\bot
		:=
		\set{(\mu,\nu) \in \MM(\omega_1) \times \MM(\omega_1; \R^d)
		\given
		\dual{\mu}{\varphi}_C + \dual{\nu}{\nabla \varphi}_{C^d} = 0
		\quad
		\forall \varphi \in C^1(\omega_1)
		}
		.
	\]
\end{lemma}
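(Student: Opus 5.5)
The identification follows from the standard duality between subspaces and quotients, applied to the isometric embedding of $C^1(\omega_1)$ into the product space.

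\emph{Step 1: the isometry.} Set $Z := C(\omega_1) \times C(\omega_1;\R^d)$ with the max-norm. The map $J \colon C^1(\omega_1) \to Z$, $J\varphi := (\varphi,\nabla\varphi)$, is a linear isometry onto $\CC$. Since $C^1(\omega_1)$ is complete, $\CC$ is a closed subspace of $Z$. Dualizing, $J^\star$ identifies $\CC\dualspace$ isometrically with $C^1(\omega_1)\dualspace$. It therefore suffices to identify $\CC\dualspace$ with $Z\dualspace/\CC^\bot$.

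\emph{Step 2: the dual of $Z$.} By the Riesz representation theorem applied componentwise, $Z\dualspace \cong \MM(\omega_1)\times\MM(\omega_1;\R^d)$ with the stated pairing. The dual norm is the sum norm $\norm{\mu}_{\MM} + \norm{\nu}_{\MM(\omega_1;\R^d)}$, because the dual of a max-norm is a sum-norm. The exact constants are irrelevant here, since the claim only concerns the induced quotient norm, whatever norm $Z\dualspace$ carries.

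\emph{Step 3: the abstract result.} For a closed subspace $M$ of a normed space $Z$, the restriction map $R \colon Z\dualspace \to M\dualspace$, $\ell \mapsto \ell|_M$, has the following properties.
\begin{itemize}
\item $R$ is surjective. Given $f \in M\dualspace$, Hahn--Banach yields an extension $\ell$ with $\norm{\ell} = \norm{f}$.
\item $\ker R = M^\bot$.
\item The induced map $Z\dualspace/M^\bot \to M\dualspace$ is an isometry. Every extension $\ell$ of $f$ satisfies $\norm{\ell} \ge \norm{f}$, and a norm-preserving extension attains equality, so $\inf_{m\in M^\bot}\norm{\ell+m} = \norm{f}$.
\end{itemize}
This is exactly \cite[Prop.~11.10]{Brezis2011}.

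Composing with $J^\star$, the class $(\mu,\nu)+\CC^\bot$ is sent to $\varphi \mapsto \dual{\mu}{\varphi}_C + \dual{\nu}{\nabla\varphi}_{C^d}$. This gives precisely the stated isomorphism, and it is isometric.

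There is no real obstacle. The only points requiring care are that $\CC$ is closed, which follows from the completeness of $C^1(\omega_1)$, and that the norm on $\MM(\omega_1) \times \MM(\omega_1;\R^d)$ is taken to be the dual norm of the max-norm on $Z$, so that the quotient map is isometric.
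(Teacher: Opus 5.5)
Your proposal is correct and follows the same route as the paper. The paper embeds $C^1(\omega_1)$ isometrically onto $\CC \subset C(\omega_1)\times C(\omega_1;\R^d)$ via $\varphi \mapsto (\varphi,\nabla\varphi)$, and then invokes the Hahn--Banach identification $\CC\dualspace \cong (\MM(\omega_1)\times\MM(\omega_1;\R^d))/\CC^\bot$ (Brezis, Prop.~11.10); as a harmless aside, closedness of $\CC$ is not actually needed for this direction of the duality, since norm-preserving extensions exist from any subspace.
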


This shows that the dual space $C^1(\omega_1)\dualspace$ is (isomorphic to) a quotient space
and its elements can be identified with equivalence classes.
We denote these by
\begin{equation*}
	\eqclass{\mu}{\nu}
	:=
	(\mu,\nu) + \CC^\bot
\end{equation*}
and the dual pairing becomes
\begin{equation*}
	\dual{\eqclass{\mu}{\nu}}{\varphi}_{C^1(\omega_1)}
	=
	\dual{\mu}{\varphi}_C + \dual{\nu}{\nabla \varphi}_{C^d}
	.
\end{equation*}
Note that by definition of the set $\CC^\bot$,
the duality pairing
$\dual{\eqclass{\mu}{\nu}}{\varphi}_{C^1(\omega_1)}$
is well defined,
i.e.,
it is independent of the representative of the equivalence class $\eqclass{\mu}{\nu}$.

For a function $\varphi \colon \omega_1 \to \R$,
we denote by
\[
	\lip(\varphi)
	:=
	\sup\set*{
		\frac{\varphi(\eta_1) - \varphi(\eta_2)}{\abs{\eta_1 - \eta_2}}
		\given
		\eta_1, \eta_2 \in \omega_1, \eta_1 \ne \eta_2
	}
	\in
	[0,\infty]
\]
its smallest possible Lipschitz constant.
Due to the uniform local quasiconvexity of $\interior \omega_1$,
$C^1$ functions are automatically Lipschitz by the following lemma.

\begin{lemma}[{\cite[Lem.~5.4]{WachsmuthWalter2024}}]
	\label{lem:Lipschitz_constant_bounded_by_C1_norm}
	There exists a constant $C_L \ge 1$ such that
	\[
		\lip(\varphi)
		\le
		C_L \norm{\varphi}_{C^1(\omega_1)}
		\qquad
		\forall
		\varphi \in C^1(\omega_1)
		.
	\]
\end{lemma}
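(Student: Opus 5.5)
To bound $\lip(\varphi)$ by $C_L\norm{\varphi}_{C^1(\omega_1)}$, I will treat nearby pairs of points via the quasiconvexity curves and distant pairs via the sup-norm. Let $r>0$ and $C\ge 1$ be the constants from \cref{def:ulq} for $A=\interior\omega_1$. I will take $C_L := \max\{C, 2/r\}$ and show that for all $\eta_1,\eta_2\in\omega_1$ with $\eta_1\ne\eta_2$,
\[
	\abs{\varphi(\eta_1)-\varphi(\eta_2)} \le C_L \norm{\varphi}_{C^1(\omega_1)}\abs{\eta_1-\eta_2}.
\]

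\textbf{Far points.} If $\abs{\eta_1-\eta_2}> r$, then
\[
	\abs{\varphi(\eta_1)-\varphi(\eta_2)}\le 2\norm{\varphi}_{C(\omega_1)} \le \tfrac{2}{r}\norm{\varphi}_{C^1(\omega_1)}\abs{\eta_1-\eta_2}.
\]

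\textbf{Close interior points.} Let $x,y\in\interior\omega_1$ with $\abs{x-y}\le r$, and take the curve $\gamma$ from \cref{def:ulq}. I will read the definition as giving $\gamma$ with values in $\interior\omega_1$, since the set being assumed uniformly locally quasiconvex is $\interior\omega_1$; if $\gamma$ only lies in $\omega_1$, this step needs extra care. The curve $\gamma$ is Lipschitz, hence differentiable a.e. with $\abs{\gamma'}\le C\abs{x-y}$. Since $\varphi$ is $C^1$ on the open set $\interior\omega_1$, the composition $\varphi\circ\gamma$ is Lipschitz and hence absolutely continuous, with $(\varphi\circ\gamma)'=\nabla\varphi(\gamma)\cdot\gamma'$ a.e. Integrating over $[0,1]$ gives
\[
	\abs{\varphi(x)-\varphi(y)}\le \int_0^1 \abs{\nabla\varphi(\gamma(s))}\,\abs{\gamma'(s)}\,\d s \le C\norm{\nabla\varphi}_{C(\omega_1;\R^d)}\abs{x-y}.
\]

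\textbf{Close boundary points.} For $\eta_1,\eta_2\in\omega_1$ with $\abs{\eta_1-\eta_2}\le r$, I use that $\omega_1$ is the closure of its interior. Pick interior sequences $x_n\to\eta_1$ and $y_n\to\eta_2$. If $\abs{\eta_1-\eta_2}<r$, then $\abs{x_n-y_n}\le r$ for large $n$, and the previous step applies. Passing to the limit using the continuity of $\varphi$ yields the bound with constant $C$.

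\textbf{The borderline case.} This is the main (minor) obstacle: $\abs{\eta_1-\eta_2}=r$ exactly, where the approximating pairs may just exceed distance $r$. I will avoid it by working with the threshold $r/2$ instead of $r$ throughout. Then close points satisfy $\abs{\eta_1-\eta_2}\le r/2<r$, so approximating pairs stay within distance $r$, and far points satisfy $\abs{\eta_1-\eta_2}>r/2$, giving the constant $4/r$. Hence $C_L=\max\{C,4/r,1\}$ works for both cases.
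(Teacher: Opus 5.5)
Your proof is correct and is essentially the standard argument behind the cited \cite[Lem.~5.4]{WachsmuthWalter2024}; the paper itself gives no proof. The argument has three parts: the chain rule along the quasiconvexity curves handles nearby interior points, density of $\interior\omega_1$ in $\omega_1$ plus continuity of $\varphi$ handles nearby boundary points, and the sup-norm bound handles distant pairs. Your $r/2$ threshold cleanly disposes of the borderline case $\abs{\eta_1-\eta_2}=r$.

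You read \cref{def:ulq} with $\gamma$ taking values in $A=\interior\omega_1$; this is the natural and presumably intended reading. It is also what makes $\varphi\circ\gamma$ Lipschitz, because $\gamma([0,1])$ is then a compact subset of the open set on which $\varphi$ is $C^1$ with bounded gradient.
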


Using the Lipschitz constant, we can define the Lipschitz norm
\[
	\norm{\varphi}_\lip
	:=
	\max\set{
		\norm{\varphi}_C,
		\lip(\varphi)}
\]
and finally define the bounded Lipschitz norm on measures, i.e.,
\begin{equation}
	\norm{u}_\BL
	:=
	\sup
	\set{\dual{u}{\varphi}_C
		\given
		\varphi \in C(\omega_1),
		\norm{\varphi}_\lip \le 1
	}
	.
\end{equation}
Interestingly,
the bounded Lipschitz norm is equivalent with the norm of $C^1(\omega_1)\dualspace$ on measures.
\begin{lemma}[{\cite[Lem.~5.8]{WachsmuthWalter2024}}]
	\label{lem:BLnorm_C1*norm_equivalent}
	On $\MM(\omega_1)$,
	$\norm{\eqclass{\cdot}{0}}_{C^1(\omega_1)\dualspace}$ and
	$\norm{\cdot}_\BL$ are equivalent.
\end{lemma}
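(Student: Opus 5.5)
We have to show two inequalities on $\MM(\omega_1)$: $\norm{\eqclass{u}{0}}_{C^1(\omega_1)\dualspace} \le C_1 \norm{u}_\BL$ and $\norm{u}_\BL \le C_2 \norm{\eqclass{u}{0}}_{C^1(\omega_1)\dualspace}$.

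\textbf{First inequality.} For $u \in \MM(\omega_1)$ we have
\[
	\norm{\eqclass{u}{0}}_{C^1(\omega_1)\dualspace} = \sup\set{\dual{u}{\varphi}_C \given \norm{\varphi}_{C^1(\omega_1)} \le 1}.
\]
By \cref{lem:Lipschitz_constant_bounded_by_C1_norm}, every $\varphi$ with $\norm{\varphi}_{C^1} \le 1$ satisfies $\norm{\varphi}_\lip \le C_L$. Hence $\varphi / C_L$ is admissible in the definition of $\norm{u}_\BL$. This gives the first inequality with $C_1 = C_L$.

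\textbf{Second inequality.} We need the reverse estimate: any $\varphi$ with $\norm{\varphi}_\lip \le 1$ must be approximated, in its action on $u$, by $C^1$ functions with uniformly bounded $C^1$ norm. We would proceed as follows.

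\begin{enumerate}
	\item \emph{Extension.} Extend $\varphi$ from $\omega_1$ to a Lipschitz function $\tilde\varphi$ on $\R^d$ with the same sup bound and a Lipschitz constant bounded by a dimensional constant. McShane's extension applied componentwise, followed by truncation at $\pm \norm{\varphi}_C$, does this.
	\item \emph{Mollification.} Set $\varphi_\varepsilon := \rho_\varepsilon * \tilde\varphi$, restricted to $\omega_1$. Then $\varphi_\varepsilon \in C^1(\omega_1)$ with $\norm{\varphi_\varepsilon}_C \le 1$ and $\norm{\nabla\varphi_\varepsilon}_C \le \lip(\tilde\varphi) \le C$. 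Moreover, $\varphi_\varepsilon \to \varphi$ uniformly on $\omega_1$.
	\item \emph{Passage to the limit.} Uniform convergence gives $\dual{u}{\varphi_\varepsilon} \to \dual{u}{\varphi}$ for the finite measure $u$. Therefore
	\[
		\dual{u}{\varphi} = \lim_{\varepsilon\to0} \dual{u}{\varphi_\varepsilon} \le C \norm{\eqclass{u}{0}}_{C^1(\omega_1)\dualspace}.
	\]
	Taking the supremum over $\varphi$ yields the second inequality.
\end{enumerate}

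The main obstacle is the first step: we need an extension whose gradient bound is controlled by $\lip(\varphi)$ measured on $\omega_1$ itself. McShane's construction guarantees this directly with respect to the Euclidean distance. Note that $\lip(\varphi)$ is defined with Euclidean distance on $\omega_1$, so no quasiconvexity is needed in this direction; it is only needed in the first inequality, through \cref{lem:Lipschitz_constant_bounded_by_C1_norm}.
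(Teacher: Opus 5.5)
Your proof is correct. The paper gives no argument of its own and only cites \cite[Lem.~5.8]{WachsmuthWalter2024}; your route is the standard argument for that result, namely \cref{lem:Lipschitz_constant_bounded_by_C1_norm} for one direction and Lipschitz extension followed by mollification for the other. As a minor sharpening, McShane's extension of a scalar function preserves the Lipschitz constant exactly, so no componentwise construction or dimensional constant is needed, and with the Euclidean norm on gradients you get $\norm{u}_\BL \le \norm{\eqclass{u}{0}}_{C^1(\omega_1)\dualspace}$ with constant one.
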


We lastly introduce a short notation.
\begin{definition}
	For $\rho \in L^2(\bar u; \R^d)$,
	we define
	$\eqclass{0}{\rho \bar u} \in C^1(\omega_1)\dualspace$
	via
	\begin{equation}
		\label{eq:short_notation_element_C1*}
		\dual{\eqclass{0}{\rho \bar u}}{\varphi}
		:=
		\int_{\omega_1}
			\rho(\eta)^\top \nabla\varphi(\eta)
		\d\bar u(\eta)
		\qquad\forall \varphi \in C^1(\omega_1)
		.
	\end{equation}
\end{definition}
Note that (up to boundary terms),
the functional
$\eqclass{0}{\rho \bar u}$
coincides with the distribution
$-\div(\rho \bar u)$.

\subsection{The lifted problem}
\label{subsec:lifted_problem}

After having introduced a characterization of $C^1(\omega_1)\dualspace$,
we can lift our problem \eqref{P} to this dual space.
First, we consider the canonical injection $\Pi : \MM(\omega_1) \to C^1(\omega_1)\dualspace$ which is defined via
\begin{equation}
	\Pi \mu := \eqclass{\mu}{0}
	.
\end{equation}
Note that by the Stone-Weierstraß theorem
$C^1(\omega_1)$ is dense in $C(\omega_1)$
and this
implies that $\Pi$ is injective.
The range of $\Pi$ can be characterized in the following way.

\begin{lemma}[{\cite[Lem.~5.13]{WachsmuthWalter2024}}]
	\label{lem:characterization_range_Pi}
	Let $\dualelement \in C^1(\omega_1)\dualspace$.
	Then,
	\[
		\dualelement \in \ran(\Pi)
		\qquad \Longleftrightarrow \qquad
		\exists C > 0:
		\forall \psi \in C^1(\omega_1):
		\abs{\dual{\dualelement}{\psi}} \le C \norm{\psi}_{C(\omega_1)}
		.
	\]
\end{lemma}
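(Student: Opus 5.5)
The forward implication is immediate. If $\dualelement = \Pi\mu = \eqclass{\mu}{0}$ for some $\mu \in \MM(\omega_1)$, then for every $\psi \in C^1(\omega_1)$ we have
\[
	\abs{\dual{\dualelement}{\psi}} = \abs{\dual{\mu}{\psi}_C} \le \norm{\mu}_{\MM(\omega_1)} \norm{\psi}_{C(\omega_1)} .
\]
So the claimed bound holds with any $C \ge \norm{\mu}_{\MM(\omega_1)}$, taking $C>0$.

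For the converse, I extend $\dualelement$ by density. The estimate $\abs{\dual{\dualelement}{\psi}} \le C\norm{\psi}_{C(\omega_1)}$ says that the linear functional $\psi \mapsto \dual{\dualelement}{\psi}$ on $C^1(\omega_1)$ is bounded with respect to the sup-norm. As recalled just before the statement, the Stone--Weierstraß theorem gives that $C^1(\omega_1)$ (which contains the polynomials restricted to $\omega_1$) is dense in $C(\omega_1)$. Hence this functional has a unique continuous extension $\ell \colon C(\omega_1) \to \R$ with $\norm{\ell} \le C$; one could equally use Hahn--Banach. By the Riesz representation theorem, $\MM(\omega_1) \cong C(\omega_1)\dualspace$, so there is $\mu \in \MM(\omega_1)$ with $\ell(\varphi) = \dual{\mu}{\varphi}_C$ for all $\varphi \in C(\omega_1)$.

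It remains to identify $\dualelement$ with $\Pi\mu$. For $\psi \in C^1(\omega_1)$ we get
\[
	\dual{\Pi\mu}{\psi} = \dual{\mu}{\psi}_C = \ell(\psi) = \dual{\dualelement}{\psi},
\]
so $\dualelement = \eqclass{\mu}{0} = \Pi\mu \in \ran(\Pi)$. Equality as elements of $C^1(\omega_1)\dualspace$ is exactly equality of their actions on $C^1(\omega_1)$, so no representative issues arise in the quotient description.

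There is no real obstacle here. The only point to check is that $C^1(\omega_1)$ is a subalgebra of $C(\omega_1)$ that contains the constants and separates points, which the polynomials already ensure; this makes the density step legitimate.
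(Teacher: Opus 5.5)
Your proof is correct. The forward bound follows from $\dual{\Pi\mu}{\psi} = \dual{\mu}{\psi}_C$, and for the converse you extend the sup-norm-bounded functional from the dense subspace $C^1(\omega_1)$ to $C(\omega_1)$ and represent it by a measure via Riesz; the paper gives no proof of its own and simply cites \cite[Lem.~5.13]{WachsmuthWalter2024}, for which your argument is the standard one.
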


For the lifting, we have to extend the functionals $F$ and $D$ appearing in \eqref{P}.
We simply extend $D$ by $\infty$ outside of $\ran(\Pi)$.
To be precise, we define
the generalized transport distance $\wdist : C^1(\omega_1)\dualspace \to \bar\R$ by
\begin{equation}
	\label{eq:Dlifted}
	\wdist(\dualelement)
	:=
	\begin{cases}
		D(u)
		&\text{if } \dualelement \in \ran(\Pi) \text{ with } \dualelement = \Pi u,
		\\
		\infty
		&\text{otherwise.}
	\end{cases}
\end{equation}
Note that $\wdist$ is well defined since $\Pi$ is injective.
Similarly, $F$ is extended to obtain $\FF \colon C^1(\omega_1)\dualspace \to \R$,
the precise value of the extension is not important, since
$\FF + \frac\alpha2 \wdist \equiv \infty$ outside of $\ran(\Pi)$.
Consequently, the problem \eqref{P} is lifted to $C^1(\omega_1)\dualspace$
and we obtain
\begin{equation*}
	\tag{P'}
	\label{P'}
	\min_{\dualelement \in C^1(\omega_1)\dualspace}
	\FF(\dualelement) + \frac \alpha 2 \wdist(\dualelement)
	.
\end{equation*}
The following result states the equivalence to problem \eqref{P}.
\begin{theorem}
	The following characterization of minimizers of \eqref{P} and \eqref{P'} holds true.
	\begin{enumerate}
		\item
		Let $\bar u \in \MM(\omega_1)$ minimize \eqref{P}.
		Then, $\eqclass{\bar u}{0}$ minimizes \eqref{P'}.
		\item
		Let $\bar\dualelement \in C^1(\omega_1)\dualspace$ minimize \eqref{P'}.
		Then, there exists $\bar u \in \MM(\omega_1)$ such that $\bar\dualelement = \eqclass{\bar u}{0}$ and $\bar u$ minimizes \eqref{P}.
	\end{enumerate}
\end{theorem}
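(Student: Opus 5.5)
The proof is essentially bookkeeping on the extension. I will use three facts: $\Pi$ is injective, $\wdist = \infty$ outside $\ran(\Pi)$, and $\FF$ agrees with $F$ on $\ran(\Pi)$. The last holds because $\FF$ is an extension of $F$, i.e., $\FF(\Pi u) = F(u)$ for all $u \in \MM(\omega_1)$. The key identity is therefore
\[
	\FF(\Pi u) + \tfrac\alpha2 \wdist(\Pi u) = F(u) + \tfrac\alpha2 D(u)
	\qquad \forall u \in \MM(\omega_1),
\]
together with $\FF(\dualelement) + \frac\alpha2\wdist(\dualelement) = \infty$ for $\dualelement \notin \ran(\Pi)$. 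This uses that $\FF$ is real-valued, so no $\infty - \infty$ issue arises.

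For (i), let $\bar u$ minimize \eqref{P} and take an arbitrary $\dualelement \in C^1(\omega_1)\dualspace$. If $\dualelement \notin \ran(\Pi)$, the objective of \eqref{P'} at $\dualelement$ is $+\infty$, which is at least the value at $\eqclass{\bar u}{0} = \Pi\bar u$; that value is finite since $D(\bar u)<\infty$, and in any case the inequality is trivial. If $\dualelement = \Pi u$, the identity above reduces the comparison to $F(u) + \frac\alpha2 D(u) \ge F(\bar u) + \frac\alpha2 D(\bar u)$, which is minimality in \eqref{P}.

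For (ii), let $\bar\dualelement$ minimize \eqref{P'}. We first show that the optimal value of \eqref{P'} is finite. Take any $u \in \MM(\omega_1)$ with $u \ge 0$ and $u(\omega_1) = u_0(\omega_0)$, e.g.\ $\bar u$ from \cref{assu:standing_assumptions} or a scaled Dirac. Then $\Gamma(u_0,u)$ contains the product plan $u_0 \otimes u / u_0(\omega_0)$, and since $c$ is continuous on a compact set, $D(u) < \infty$. Hence the value of \eqref{P'} at $\Pi u$ is finite, so the minimal value is finite and $\wdist(\bar\dualelement) < \infty$. This forces $\bar\dualelement \in \ran(\Pi)$, i.e., $\bar\dualelement = \Pi\bar u = \eqclass{\bar u}{0}$ for a unique $\bar u$ by injectivity. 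Finally, for any $u \in \MM(\omega_1)$, comparing the objective of \eqref{P'} at $\Pi u$ with its value at $\Pi\bar u$ and using the identity shows that $\bar u$ minimizes \eqref{P}.

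I do not expect any real obstacle. The only point requiring care is the finiteness of the infimum used in (ii), which the product-plan argument provides.
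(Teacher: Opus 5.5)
Your proof is correct and is the same definitional bookkeeping the paper intends; the paper only remarks that the claim ``follows directly from the definitions'' of the lifted functionals. Your product-plan argument showing that the optimal value of \eqref{P'} is finite is a detail the paper leaves implicit. It is genuinely needed in (ii): without it, a minimizer lying outside $\ran(\Pi)$ is not excluded.
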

The proof follows directly from the definitions of $\FF$ and $\DD$.
For brevity of the notation,
we identify the fixed measure $\bar u \in \MM(\omega_1)$,
see \itemref{assu:standing_assumptions:5},
with its image under $\Pi$, i.e.,
\begin{equation}
	\label{eq:identification_bar_u}
	\bar u
	=
	\Pi \bar u
	=
	\eqclass{\bar u}{0}
	\qquad\text{in } C^1(\omega_1)\dualspace
	.
\end{equation}

\section{The non-degeneracy condition}
\label{sec:NDC}
In order to apply \cref{thm:abstract_second_order_result}
to our lifted problem \eqref{P'},
we have to verify \eqref{NDC}.
To this end, we apply \cref{lem:ndc_sufficient}
and the corresponding inequality is checked
via a duality approach
similar to \cite{WachsmuthWachsmuth2022}.
Therefore, we introduce the functional
$J \colon C^1(\omega_1) \to \R$
via
\begin{equation}\label{eq:predualobj}
	J(\psi)
	:=
	-\int_{\omega_0} \psi^{\bar c}(x) \d u_0(x)
	,
\end{equation}
where the $\bar c$-conjugate was defined in \eqref{eq:cconjugate}.
As shown in \cite[Lemma~A.2]{MeyerWachsmuth2025}, $J$ is convex and continuous.
We prove that the convex conjugate of $J$, i.e.,
$J\conjugate \colon C^1(\omega_1)\dualspace \to \bar\R$,
actually is
$\wdist$,
using ideas from \cite{WachsmuthWalter2024}.

\begin{theorem}
	\label{thm:J*_is_distance}
	The convex conjugate of $J$ is $\wdist$.
\end{theorem}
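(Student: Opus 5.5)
We compute, for $\dualelement \in C^1(\omega_1)\dualspace$,
\[
	J\conjugate(\dualelement) = \sup_{\psi \in C^1(\omega_1)} \dual{\dualelement}{\psi} + \int_{\omega_0} \psi^{\bar c}(x) \d u_0(x),
\]
and split into two cases according to whether $\dualelement \in \ran(\Pi)$. Throughout we use that $\psi \mapsto \psi^{\bar c}$ is $1$-Lipschitz from $C(\omega_1)$ to $C(\omega_0)$ (with respect to the sup norms), that $(\psi + s)^{\bar c} = \psi^{\bar c} - s$ for constants $s$, and that $\psi \le \tilde\psi$ implies $\psi^{\bar c} \ge \tilde\psi^{\bar c}$.

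\textbf{Case $\dualelement = \Pi u$.} The objective becomes $\int \psi \d u + \int \psi^{\bar c} \d u_0$, restricted to $\psi \in C^1(\omega_1)$. By the density of $C^1(\omega_1)$ in $C(\omega_1)$ (Stone--Weierstraß) and the $1$-Lipschitz continuity of $\psi \mapsto \psi^{\bar c}$, this functional is continuous on $C(\omega_1)$. Hence the supremum over $C^1(\omega_1)$ equals the supremum over $C(\omega_1)$, which is $D(u)$ by \itemref{lem:properties_D:1}. This covers both the case $u \ge 0$ with $u(\omega_1) = u_0(\omega_0)$ and the degenerate cases, where \itemref{lem:properties_D:1} yields $D(u) = \infty = \sup$ with the convention $\inf \emptyset = \infty$. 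If one prefers a direct argument for the degenerate cases: when $u(\omega_1) \ne u_0(\omega_0)$, test with constants $\psi = s$, since the objective equals $s(u(\omega_1) - u_0(\omega_0)) + \text{const}$. When $u \not\ge 0$, test with $\psi = -s\varphi$, where $\varphi \ge 0$ is a $C^1$ function with $\int \varphi \d u < 0$. Monotonicity gives $\psi^{\bar c} \ge (0)^{\bar c}$, which is bounded, so the objective tends to $\infty$ as $s \to \infty$.

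\textbf{Case $\dualelement \notin \ran(\Pi)$.} We must show $J\conjugate(\dualelement) = \infty$. By \cref{lem:characterization_range_Pi}, there exist $\psi_n \in C^1(\omega_1)$ with $\norm{\psi_n}_{C(\omega_1)} \le 1$ and $\dual{\dualelement}{\psi_n} \ge n$. Because $\psi_n$ is bounded in sup norm, the bound $\abs{(s\psi_n)^{\bar c} - 0^{\bar c}} \le s$ gives
\[
	\int (s\psi_n)^{\bar c} \d u_0 \ge \int 0^{\bar c} \d u_0 - s\, u_0(\omega_0).
\]
Therefore
\[
	J\conjugate(\dualelement) \ge s\bigl(n - u_0(\omega_0)\bigr) - C.
\]
Taking $s = 1$ and letting $n \to \infty$ shows $J\conjugate(\dualelement) = \infty$. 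This is the step that uses the lifting: the key input is that $\ran(\Pi)$ is characterized by sup-norm boundedness, which is exactly \cref{lem:characterization_range_Pi}.

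\textbf{Main obstacle.} The only potential subtlety is the density and continuity step in the first case, namely that the supremum over $C^1(\omega_1)$ equals the supremum over $C(\omega_1)$. This follows immediately from the $1$-Lipschitz property of the $\bar c$-transform, which itself is immediate from the definition \eqref{eq:cconjugate} as an infimum of functions that are shifted by at most $\norm{\psi - \tilde\psi}_C$. Combining both cases with the definition \eqref{eq:Dlifted} yields $J\conjugate = \wdist$.
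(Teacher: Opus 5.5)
Your proposal is correct and follows essentially the same route as the paper. It uses the same case split, with normalized test functions $\psi_n$ from \cref{lem:characterization_range_Pi} giving $J\conjugate(\dualelement)=\infty$ off $\ran(\Pi)$, and the same density-plus-continuity reduction to the Kantorovich dual in \itemref{lem:properties_D:1} on $\ran(\Pi)$. The only differences are minor: you prove the continuity of $\psi \mapsto \int \psi^{\bar c}\,\mathrm{d}u_0$ directly via the $1$-Lipschitz property of the $\bar c$-transform instead of citing it, and you add an optional direct check of the cases with $\Gamma(u_0,u)=\emptyset$.
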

\begin{proof}
	Let $\dualelement \in C^1(\omega_1)\dualspace$ be given.
	We first consider the case $\dualelement \in C^1(\omega_1)\dualspace \setminus \ran(\Pi)$ which is the otherwise-case in \eqref{eq:Dlifted}.
	From \cref{lem:characterization_range_Pi}, we obtain functions $\psi_n \in C^1(\omega_1)$ such that
	$\abs{\dual{\dualelement}{\psi_n}} > n \norm{\psi_n}_{C(\omega_1)}$ for every $n \in \N$.
	Without loss of generality, $\abs{\dual{\dualelement}{\psi_n}} = \dual{\dualelement}{\psi_n}$ and $\norm{\psi_n}_{C(\omega_1)} = 1$.
	This implies
	\begin{align*}
		J\dualspace(\dualelement)
		&\ge
		\sup_{n \in \N}
		\parens*{
			\dual{\dualelement}{\psi_n}_{C^1(\omega_1)}
			+ \int_{\omega_0} \inf_{\eta \in \omega_1} c(x,\eta) - \psi_n(\eta) \d u_0(x)
		}
		\\
		&\ge
		\sup_{n \in \N}
		\parens*{
			n - u_0(\omega_0) \parens*{ \norm{c}_{C(\omega_0 \times \omega_1)} + 1 }
		}
		=
		\infty
		=
		\DD(\dualelement)
		.
	\end{align*}
	If otherwise $\dualelement \in \ran(\Pi)$, we have
	$\dualelement = \eqclass{u}{0}$ for some $u \in \MM(\omega_1)$.
	Consequently, we have
	$\dual{\eqclass{u}{0}}{\psi}_{C^1(\omega_1)} = \int_{\omega_1} \psi(\eta) \d u(\eta)$
	and thus
	\begin{align*}
		J\dualspace(\dualelement)
		&=
		\sup_{\psi \in C^1(\omega_1)}
		\parens*{
			\int_{\omega_1} \psi(\eta) \d u(\eta)
			+
			\int_{\omega_0} \psi^{\bar c}(x) \d u_0(x)
		}
		,
	\end{align*}
	which is almost the Kantorovich dual problem,
	in which the supremum is taken over $C(\omega_1)$
	instead of $C^1(\omega_1)$.
	However, the suprema coincide as $C^1(\omega_1)$ is dense in $C(\omega_1)$ by compactness of $\omega_1$
	and
	$C(\omega_1) \ni \psi \mapsto \int_{\omega_0} \psi^{\bar c}(x) \d u_0(x)$
	is continuous, see \cite[Lemma~A.2]{MeyerWachsmuth2025}.
	Finally, \itemref{lem:properties_D:1} yields
	\begin{equation*}
		J\dualspace(\dualelement)
		=
		D(u)
		=
		\DD(\eqclass{u}{0}).
	\end{equation*}
	This finishes the proof.
\end{proof}
This duality has an interesting consequence concerning the Kantorovich potentials.
Given $u \in \MM(\omega_1)$, a function $\psi \in C^1(\omega_1)$
is a Kantorovich potential if and only if
\begin{equation}
	\label{eq:Kantorovich_potential}
	\int_{\omega_1} \psi(\eta) \d u(\eta)
	=
	\DD(\eqclass{u}{0})
	+
	J(\psi)
	,
\end{equation}
see \eqref{eq:Kantorovich_potential_1}.
By the Fenchel--Young identity, this is
equivalent to
$\psi \in \partial\DD(\eqclass{u}{0})$
and to
$\eqclass{u}{0} \in \partial J(\psi)$.

The first part of this section is dedicated to the derivation of 
the (descent) inequality
\begin{equation*}
	J(\psi)
	\le
	J(\bar\varphi) + \dual{\bar u}{\psi - \bar\varphi}
	+ \frac{L}{2} \norm{\psi-\bar\varphi}_{C^1(\omega_1)}^2
	\qquad\forall \psi \in C^1(\omega_1)
\end{equation*}
for some $L \ge 0$,
where $\bar\varphi \in C^1(\omega_1)$ is a Kantorovich potential of $\bar u$.
Afterwards, arguing as in \cite[Lemma~4.9]{WachsmuthWachsmuth2022}
yields
\begin{equation*}
	J\conjugate(v)
	\ge
	J\conjugate(\bar u)
	+
	\dual{\bar\varphi}{v - \bar u}
	+
	\frac{1}{2 L} \norm{v - \bar u}_{C^1(\omega_1)\dualspace}^2
	\qquad\forall v \in C^1(\omega_1)\dualspace
	,
\end{equation*}
where we used the identification \eqref{eq:identification_bar_u}.
Then, \Cref{lem:ndc_sufficient} yields \eqref{NDC}.

The Kantorovich potential $\bar\varphi$ and perturbations thereof will play a prominent role 
in the upcoming analysis. 
It will be convenient to define a transport map $T(\psi)$
associated to an arbitrary $\psi \in C^1(\omega_1)$
that maps each $x \in \omega_0$ to a minimizer of
$c(x, \cdot) - \psi$ on $\omega_1$.
Note that a minimizer exists by the Weierstraß theorem but it does not have to be unique.
Of course, it is essential that the map $T(\psi)$
is measurable.
In our case, the existence of such a measurable selection
of the set-valued map
$\omega_0 \ni x \mapsto \argmin_{\eta \in \omega_1} c(x, \eta) - \psi(\eta)$
follows from \cite[Thm.~8.1.3,~Thm.~8.2.11]{AubinFrankowska2008}.

\begin{definition}
	\label{def:T_operator}
	Let $T : C^1(\omega_1) \to \set{ f : \omega_0 \to \omega_1 \given f \text{ is Borel-measurable} }$ be an operator that maps a function $\psi \in C^1(\omega_1)$
	to a measurable transport map,
	i.e.,
	$T(\psi)$ is Borel-measurable
	and $T(\psi)(x) \in \argmin_{\eta \in \omega_1} c(x,\eta) - \psi(\eta)$
	for all $x \in \omega_0$.
\end{definition}

Let us argue that
$T(\psi)$ is an optimal transport map from $u_0$
to $u(\psi) := T(\psi) \# u_0$.
The definition of $T(\psi)$ implies
\begin{equation*}
	\psi^{\bar c}(x) = c(x, T(\psi)(x)) - \psi(T(\psi)(x))
	\qquad\forall x \in \omega_0
	.
\end{equation*}
By moving the last term to the left-hand side and by integrating against $u_0$,
we get
\begin{align*}
	\int_{\omega_0} \psi^{\bar c} \d u_0
	+
	\int_{\omega_1} \psi(\eta) \d(u(\psi))(\eta)
	&=
	\int_{\omega_0} \psi^{\bar c} \d u_0
	+
	\int_{\omega_0} \psi(T(\psi)(x)) \d u_0(x)
	\\&
	=
	\int_{\omega_0} c(x, T(\psi)(x)) \d u_0(x)
	=
	\int_{\omega_0 \times \omega_1} c \d \pi(\psi),
\end{align*}
where $\pi(\psi) := (\id, T(\psi)) \# u_0$.
Now, \itemref{lem:properties_D:1}
implies that $\pi(\psi)$ and $\psi$
are primal and dual solutions of the Kantorovich problem
for $u(\psi)$.
Consequently, $T(\psi)$
is an optimal transport map from $u_0$ to $u(\psi)$
and $\eqclass{u(\psi)}{0} \in \partial J(\psi)$,
cf.\ \eqref{eq:Kantorovich_potential}.

For one Kantorovich potential associated to $\bar u$
(which is the fixed measure from \itemref{assu:standing_assumptions:5}),
we need some regularity assumption.

\begin{assumption}[Regularity assumption]
	\label{assu:regularity_assumptions_on_T}
	We assume that (associated to $\bar u$)
	there exists a Kantorovich potential
	$\bar\varphi \in C^1(\omega_1)$,
	a map $\bar T \colon \omega_0 \to \omega_1$
	and $\gamma > 0$
	such that
	\begin{equation}
		\label{eq:regularity}
		c(x,\eta) - \bar\varphi(\eta)
		\ge
		c(x,\bar T(x)) - \bar\varphi(\bar T(x)) + \frac \gamma 2 \abs*{\eta - \bar T(x)}^2
		\qquad
		\forall
		x \in \omega_0,
		\eta \in \omega_1
		.
	\end{equation}
\end{assumption}

Note that this assumption implies that $\bar T(x)$
is the unique minimizer of
$c(x,\cdot) - \bar\varphi$ on $\omega_1$ for all $x \in \omega_0$.
Consequently,
$\bar T = T(\bar \varphi)$
and, in particular, $\bar T$ is Borel measurable.
Similar to the argument above, we verify $\bar u = \bar T \# u_0$.
Since $\bar\varphi$ is a Kantorovich potential associated with $\bar u$,
\eqref{eq:Kantorovich_potential} implies
\begin{equation*}
	\int_{\omega_1} \bar\varphi \d \bar u
	=
	D(\bar u)
	-
	\int_{\omega_0} \bar\varphi^{\bar c} \d u_0.
\end{equation*}
If we denote by $\bar\pi \in \Gamma(u_0, \bar u)$
an optimal transport plan, the previous identity implies
\begin{equation*}
	\int_{\omega_0 \times \omega_1}
	c(x, \eta) - ( \bar\varphi^{\bar c}(x) + \bar\varphi(\eta) ) \d \bar\pi(x,\eta)
	=
	0.
\end{equation*}
The definition of the $\bar c$-conjugate implies that the integrand is
nonnegative.
Since the integrand is also continuous,
this implies
\begin{equation*}
	\supp(\bar\pi)
	\subset
	\set{
		(x,\eta) \in \omega_0 \times \omega_1
		\given
		c(x, \eta) - ( \bar\varphi^{\bar c}(x) + \bar\varphi(\eta) ) = 0
	}
	=
	\set{ (x, \bar T(x)) \given x \in \omega_0 },
\end{equation*}
where the latter identity follows from
the fact that $\bar T(x)$ is the unique minimizer of $c(x,\cdot) - \bar\varphi$.
Consequently, $\bar\pi = (\id, \bar T) \# u_0$
and $\bar T$ is the unique optimal transport map from $u_0$ to $\bar u$.

We mention that the continuity of $\bar T$
follows from \eqref{eq:regularity} by a simple argument.
\begin{lemma}
	\label{lem:continuity_bar_T}
	Under \cref{assu:regularity_assumptions_on_T}, $\bar T$ is continuous.
\end{lemma}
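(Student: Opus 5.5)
We argue by sequential continuity, using compactness of $\omega_1$ together with the uniform continuity of $c$ and $\bar\varphi$. Fix $x \in \omega_0$ and a sequence $x_n \to x$ in $\omega_0$. The goal is to show $\bar T(x_n) \to \bar T(x)$.

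\textbf{Step 1: test the inequality at the point $x_n$.} Apply \eqref{eq:regularity} with $x_n$ in place of $x$ and $\eta := \bar T(x)$. This gives
\begin{equation*}
	\frac \gamma 2 \abs{\bar T(x) - \bar T(x_n)}^2
	\le
	c(x_n, \bar T(x)) - \bar\varphi(\bar T(x))
	- c(x_n, \bar T(x_n)) + \bar\varphi(\bar T(x_n))
	.
\end{equation*}

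\textbf{Step 2: replace $x_n$ by $x$.} Write $c(x_n,\cdot) = c(x,\cdot) + \bigl(c(x_n,\cdot) - c(x,\cdot)\bigr)$ in both cost terms. The right-hand side then becomes
\begin{equation*}
	\bigl[c(x,\bar T(x)) - \bar\varphi(\bar T(x))\bigr]
	- \bigl[c(x,\bar T(x_n)) - \bar\varphi(\bar T(x_n))\bigr]
	+ 2\sup_{\eta\in\omega_1}\abs{c(x_n,\eta) - c(x,\eta)}
	.
\end{equation*}
Since $\bar T(x)$ minimizes $c(x,\cdot) - \bar\varphi$, the difference of the two brackets is $\le 0$. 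In fact, by \eqref{eq:regularity} at $x$ it is bounded above by $-\frac\gamma2\abs{\bar T(x_n)-\bar T(x)}^2$.

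\textbf{Step 3: conclude.} Combining Steps 1 and 2 yields
\begin{equation*}
	\gamma\abs{\bar T(x_n) - \bar T(x)}^2
	\le
	2\sup_{\eta\in\omega_1}\abs{c(x_n,\eta) - c(x,\eta)}
	.
\end{equation*}
The cost $c$ is continuous on the compact set $\omega_0\times\omega_1$, hence uniformly continuous. Therefore the supremum tends to $0$ as $x_n \to x$, and so $\bar T(x_n)\to\bar T(x)$. This argument even gives a modulus of continuity for $\bar T$ in terms of the modulus of continuity of $c$ in its first variable.

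The only step needing care is the uniform-in-$\eta$ estimate in Step 3. It is exactly the uniform continuity of $c$ on the compact product, so there is no real obstacle.
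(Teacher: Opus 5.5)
Your proof is correct and is essentially the paper's argument. The paper applies \eqref{eq:regularity} at $x_i$ with $\eta = \bar T(x_{3-i})$ for two arbitrary points and adds the two inequalities, so that the $\bar\varphi$-terms cancel and $\gamma\abs{\bar T(x_1)-\bar T(x_2)}^2$ is bounded by a difference of cost values; it then concludes by uniform continuity of $c$ on $\omega_0\times\omega_1$, and your sequential version with the explicit bound $\gamma\abs{\bar T(x_n)-\bar T(x)}^2 \le 2\sup_{\eta\in\omega_1}\abs{c(x_n,\eta)-c(x,\eta)}$ is just a more explicit rendering of that same step.
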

\begin{proof}
	For $x_1, x_2 \in \omega_0$,
	we apply \eqref{eq:regularity}
	with $x = x_i$ and $\eta = \bar T(x_{3-i})$.
	Adding the resulting inequalities yields
	\begin{equation*}
		c(x_1, \bar T(x_2))
		+
		c(x_2, \bar T(x_1))
		\ge
		c(x_1, \bar T(x_1))
		+
		c(x_2, \bar T(x_2))
		+
		\gamma \abs{\bar T(x_1) - \bar T(x_2)}^2
		.
	\end{equation*}
	Since the continuous function $c$
	is uniformly continuous on the compact set
	$\omega_0 \times \omega_1$,
	this implies continuity of $\bar T$.
\end{proof}

In the next result,
we give a possibility to verify \cref{assu:regularity_assumptions_on_T}.
\begin{lemma}
	\label{lem:Hessian_spd_uniformly_implies_regularity}
	Let $\omega_1$ be convex and
	let $c \in C^2(\omega_0 \times \omega_1)$ and $\bar\varphi \in C^2(\omega_1)$.
	Further, for all $x \in \omega_0$,
	the minimizer of $\omega_1 \ni \eta \mapsto c(x,\eta) - \bar\varphi(\eta)$ shall be unique
	and it is denoted by $\bar T(x)$.
	Additionally,
	we assume that for all $x \in \omega_0$
	the Hessian
	$\nabla_{\eta\eta} c(x,\bar T(x)) - \nabla^2 \bar\varphi(\bar T(x))$
	is positive definite.
	Then, \Cref{assu:regularity_assumptions_on_T} holds.
\end{lemma}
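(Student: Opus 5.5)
We need to show that $g_x(\eta) := c(x,\eta) - \bar\varphi(\eta) - c(x,\bar T(x)) + \bar\varphi(\bar T(x))$ satisfies $g_x(\eta) \ge \frac\gamma2\abs{\eta-\bar T(x)}^2$ uniformly in $x \in \omega_0$, $\eta\in\omega_1$. I would argue by contradiction combined with a local/global splitting. First, I would establish that $\bar T$ is continuous. This follows from uniqueness of the minimizer and compactness: if $x_n \to x$ and $\bar T(x_n) \to \eta^*$ along a subsequence, then passing to the limit in $g_{x_n}(\eta)\ge 0$ shows that $\eta^*$ minimizes $c(x,\cdot)-\bar\varphi$, so $\eta^* = \bar T(x)$. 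Consequently the map $x\mapsto H(x) := \nabla_{\eta\eta}c(x,\bar T(x)) - \nabla^2\bar\varphi(\bar T(x))$ is continuous on the compact set $\omega_0$. Hence its smallest eigenvalue is bounded below by some $\lambda>0$.

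\emph{Local estimate.} Since $\omega_1$ is convex, the segment between $\bar T(x)$ and $\eta$ lies in $\omega_1$. I would use a Taylor expansion with integral remainder:
\[
g_x(\eta) = \nabla_\eta g_x(\bar T(x))^\top(\eta-\bar T(x)) + \int_0^1 (1-s)(\eta-\bar T(x))^\top \nabla^2 g_x(\bar T(x)+s(\eta-\bar T(x)))(\eta-\bar T(x))\d s .
\]
The first-order term is nonnegative: since $\bar T(x)$ minimizes $g_x$ over the convex set $\omega_1$, the variational inequality $\nabla_\eta g_x(\bar T(x))^\top(\eta-\bar T(x))\ge0$ holds for all $\eta \in \omega_1$. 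This step needs $C^2$ up to the boundary, which is what $c\in C^2(\omega_0\times\omega_1)$ and $\bar\varphi \in C^2(\omega_1)$ mean here. For the second-order term, uniform continuity of $\nabla_{\eta\eta}c$ and $\nabla^2\bar\varphi$ on compacts gives some $\delta>0$ such that $\nabla^2 g_x(\zeta) \succeq \frac\lambda2 I$ whenever $\abs{\zeta-\bar T(x)}\le\delta$, uniformly in $x$. Hence $g_x(\eta)\ge\frac\lambda4\abs{\eta-\bar T(x)}^2$ whenever $\abs{\eta-\bar T(x)}\le\delta$.

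\emph{Global estimate.} On the set $K:=\set{(x,\eta)\in\omega_0\times\omega_1 \given \abs{\eta-\bar T(x)}\ge\delta}$, which is compact because $\bar T$ is continuous, the function $(x,\eta)\mapsto g_x(\eta)$ is continuous. It is also strictly positive there, by uniqueness of the minimizer. Hence it attains a minimum $m>0$ on $K$ (if $K$ is empty, the local estimate already covers everything). Using $\abs{\eta-\bar T(x)}\le \operatorname{diam}(\omega_1)$, we obtain $g_x(\eta)\ge \frac{m}{\operatorname{diam}(\omega_1)^2}\abs{\eta-\bar T(x)}^2$ on $K$. Taking $\gamma := 2\min\set{\lambda/4,\, m/\operatorname{diam}(\omega_1)^2}$ then yields \eqref{eq:regularity}.

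The main obstacle is the uniformity in $x$. It is resolved by the continuity of $\bar T$, which makes both the Hessian bound and the compactness argument uniform. A secondary point is the boundary of $\omega_1$, where the gradient need not vanish; this is handled by the first-order optimality condition on the convex set $\omega_1$ rather than by requiring $\nabla_\eta g_x(\bar T(x))=0$.
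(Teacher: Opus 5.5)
Your proposal is correct and follows essentially the same route as the paper. Both arguments first get continuity of $\bar T$ from compactness and uniqueness of the minimizer. Both then obtain a uniform eigenvalue bound $\lambda/2$ on a uniform neighborhood, which together with the first-order variational inequality on the convex set $\omega_1$ and a Taylor expansion gives local growth with constant $\lambda/4$. Finally, both apply a compactness argument on $\{(x,\eta) : \abs{\eta-\bar T(x)}\ge\delta\}$ and arrive at the same $\gamma=\min\{\lambda/2,\,2m/\operatorname{diam}(\omega_1)^2\}$. The only differences are cosmetic: you use the integral form of the Taylor remainder where the paper uses the mean-value form, and despite your opening sentence, your argument is direct rather than by contradiction.
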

\begin{proof}
	First, we prove that $\bar T$ is continuous.
	Let $\seq{x_n} \subset \omega_0$ with
	$x_n \to x$ be given.
	Then, by compactness of $\omega_0$,
	we can extract a subsequence (without renaming) such that $\bar T(x_n) \to \bar\eta$.
	By optimality of the $\bar T(x_n)$ we get
	\begin{equation*}
		c(x_n, \bar T(x_n)) - \bar\varphi(\bar T(x_n))
		\le
		c(x_n, \eta) - \bar\varphi(\eta)
		\qquad\forall \eta \in \omega_1.
	\end{equation*}
	Passing to the limit $n \to \infty$
	shows that $\bar\eta$ is a minimizer of $c(x, \cdot) - \bar\varphi$
	and, consequently,
	$\bar\eta = \bar T(x)$.
	Since the limit is independent of the chosen subsequence,
	a usual subsequence-subsequence argument yields continuity of $\bar T$.

	This continuity of $\bar T$ implies the existence of $\mu > 0$
	such that the minimal eigenvalue of the Hessian satisfies
	$\lambda_{\textup{min}}(\nabla_{\eta\eta} c(x,\bar T(x)) - \nabla^2 \bar\varphi(\bar T(x))) \ge \mu$
	for all $x \in \omega_0$.

	Next, we prove
	that \eqref{eq:regularity} holds for all $x \in \omega_0$ and all $\eta \in \omega_1 \cap B_\varepsilon(\bar T(x))$ for some $\varepsilon > 0$.
	Therefore, we define the functions
	$f : \omega_0 \times \omega_1 \to \R$ and
	$g : \omega_0 \times \omega_1 \to \R^{d \times d}$ via
	$f(x,\eta) := c(x,\eta) - \bar\varphi(\eta)$ and
	$g(x,\eta) := \nabla_{\eta\eta}^2 c(x,\eta) - \nabla^2 \bar\varphi(\eta)$.
	The function $g$ is uniformly continuous as $\omega_0 \times \omega_1$ is compact.
	Consequently,
	there exists $\varepsilon > 0$ such that
	$\lambda_{\textup{min}}( g(x, \eta ) ) \ge \frac{\mu}{2}$
	for all $x \in \omega_0$ and $\eta \in \omega_1 \cap B_\varepsilon(\bar T(x))$.

	Since $\bar T(x)$ is the unique minimizer of $f(x, \cdot)$
	on the convex set $\omega_1$,
	the first-order optimality condition implies
	\begin{equation*}
		\nabla_\eta f(x, \bar T(x))^\top (\eta - \bar T(x))
		\ge
		0 
		.
	\end{equation*}
	Together with
	a Taylor expansion,
	we obtain
	for some $\eta_x$ between $\eta$ and $\bar T(x)$
	\begin{align*}
		\MoveEqLeft
		f(x,\eta)
		-
		f(x,\bar T(x))
		\\
		&=
		\nabla_\eta f(x,\bar T(x))^\top (\eta - \bar T(x))
		+
		\frac 1 2 (\eta - \bar T(x))^\top g(x,\eta_x) (\eta - \bar T(x))
		\\
		&\ge
		0
		+
		\frac \mu 4 \abs{\eta - \bar T(x)}^2
		.
	\end{align*}
	This shows that \eqref{eq:regularity}
	holds for every $x \in \omega_0$ and all $\eta \in \omega_1 \cap B_\varepsilon(\bar T(x))$.

	It remains to extend this inequality to all $\eta \in \omega_1$.
	For that, we first show that there exists some $\delta > 0$ such that $f(x,\eta) \ge f(x,\bar T(x)) + \delta$ for all $x \in \omega_0$ and all $\eta \in \omega_1 \setminus B_\varepsilon(\bar T(x))$.

	We define the set $U := \set{(x,\eta) \in \omega_0 \times \omega_1) \given \abs{\bar T(x) - \eta} \ge \varepsilon}$.
	The set $U$ is closed (using continuity of $\bar T$) and bounded, thus compact.
	The function 
	$\mathdh : U \to \R, \mathdh(x,\eta) := f(x,\eta) - f(x,\bar T(x))$ is continuous
	and positive, since $\bar T(x)$ is the unique minimizer of $f(x,\cdot)$.
	Consequently it is uniformly positive,
	i.e., there exists $\delta > 0$ such that
	$\mathdh(x, \eta) \ge \delta$
	for all $(x,\eta) \in U$.
	
	Finally, we define $\gamma := \min(\mu/2, 2\delta/\diam(\omega_1)^2)$.
	As seen above, the wanted inequality in \eqref{eq:regularity} holds 
	with this value of $\gamma$
	for $x \in \omega_0$ and
	$\eta \in \omega_1 \cap B_\varepsilon(\bar T(x))$.
	On the other hand,
	if $\eta \in \omega_1 \setminus B_\varepsilon(\bar T(x))$,
	we have
	\[
		f(x,\eta) - f(x,\bar T(x))
		=
		\mathdh(x,\eta)
		\ge
		\delta
		\ge
		\frac{\delta}{\diam(\omega_1)^2} \abs{\eta - \bar T(x)}^2
		\ge
		\frac{\gamma}{2} \abs{\eta - \bar T(x)}^2
		.
		\qedhere
	\]
\end{proof}

As an auxiliary result, we show that
\cref{assu:regularity_assumptions_on_T} implies some stability
of the objective values of minimizers.
\begin{lemma}
	\label{lem:regularity_implies_nice}
	Let the Kantorovich potential $\bar\varphi \in C^1(\omega_1)$ satisfy \Cref{assu:regularity_assumptions_on_T}.
	For all
	$\psi \in C^1(\omega_1)$
	and
	all $x \in \omega_0$,
	every minimizer $\eta_\psi^x \in \omega_1$
	of $c(x,\cdot) - \psi$
	satisfies
	\begin{equation}
		\label{eq:perturbed_minimizer_inequalities}
		\abs*{
			\bar\varphi(\bar T(x)) - \bar\varphi(\eta_\psi^x)
			- \psi(\bar T(x)) + \psi(\eta_\psi^x)
		}
		\le
		\frac {2 C_L^2} \gamma \norm{\psi - \bar\varphi}_{C^1(\omega_1)}^2
		,
	\end{equation}
	where $C_L$ is the constant from \cref{lem:Lipschitz_constant_bounded_by_C1_norm}.
\end{lemma}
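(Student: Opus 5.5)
Set $\delta := \psi - \bar\varphi$ and fix $x \in \omega_0$. Abbreviate $\bar\eta := \bar T(x)$ and $\eta := \eta_\psi^x$. The quantity to bound is
\[
	Q := \bar\varphi(\bar\eta) - \bar\varphi(\eta) - \psi(\bar\eta) + \psi(\eta) = \delta(\eta) - \delta(\bar\eta).
\]
The plan is to bound $\abs{Q}$ twice: once linearly in $\abs{\eta - \bar\eta}$ using the Lipschitz bound for $\delta$, and once from below by $\frac\gamma2 \abs{\eta-\bar\eta}^2$ using optimality of $\eta$ and the growth condition \eqref{eq:regularity}. Combining the two bounds gives the quadratic estimate.

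\textbf{Step 1 (Lipschitz bound).} By \cref{lem:Lipschitz_constant_bounded_by_C1_norm} applied to $\pm\delta \in C^1(\omega_1)$,
\[
	\abs{Q} \le C_L \norm{\delta}_{C^1(\omega_1)} \abs{\eta - \bar\eta}.
\]

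\textbf{Step 2 (sign and size of $Q$).} Since $\eta$ minimizes $c(x,\cdot)-\psi$, we have $c(x,\eta) - \psi(\eta) \le c(x,\bar\eta) - \psi(\bar\eta)$. Apply \eqref{eq:regularity} with the point $\eta$:
\[
	c(x,\eta) - \bar\varphi(\eta) \ge c(x,\bar\eta) - \bar\varphi(\bar\eta) + \tfrac\gamma2\abs{\eta-\bar\eta}^2 .
\]
Subtracting the first inequality from the second eliminates $c$ and yields
\[
	\psi(\eta) - \bar\varphi(\eta) - \psi(\bar\eta) + \bar\varphi(\bar\eta) \ge \tfrac\gamma2 \abs{\eta-\bar\eta}^2,
\]
that is, $Q \ge \frac\gamma2\abs{\eta-\bar\eta}^2 \ge 0$. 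In particular $\abs{Q} = Q$.

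\textbf{Step 3 (combine).} Steps 1 and 2 give $\frac\gamma2 \abs{\eta-\bar\eta}^2 \le C_L\norm{\delta}_{C^1(\omega_1)}\abs{\eta-\bar\eta}$. Hence
\[
	\abs{\eta-\bar\eta} \le \frac{2C_L}{\gamma}\norm{\delta}_{C^1(\omega_1)}.
\]
If $\eta = \bar\eta$, this holds trivially and $Q = 0$. Inserting this back into Step 1 gives
\[
	\abs{Q} \le \frac{2C_L^2}{\gamma}\norm{\delta}_{C^1(\omega_1)}^2,
\]
which is exactly \eqref{eq:perturbed_minimizer_inequalities}.

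The argument is elementary. The step that needs most care is the sign bookkeeping in Step 2, where the terms with $c$ must cancel and the combination left over must be exactly $Q$. No convexity of $\omega_1$ is needed, because \cref{lem:Lipschitz_constant_bounded_by_C1_norm} already provides the global Lipschitz bound.
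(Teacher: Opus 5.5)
Your proof is correct and takes the same approach as the paper. The paper also combines the optimality of $\eta_\psi^x$ with \eqref{eq:regularity} to bound the quantity from below by $\frac\gamma2\abs{\eta_\psi^x-\bar T(x)}^2$, bounds it from above via \cref{lem:Lipschitz_constant_bounded_by_C1_norm}, and reinserts the resulting estimate on $\abs{\eta_\psi^x-\bar T(x)}$; your Step 3 just writes out what the paper leaves as ``the claim follows''.
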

\begin{proof}
	The definition of $\eta_\psi^x$ and \eqref{eq:regularity} yield the inequalities
	\begin{align*}
		c(x,\eta_\psi^x) - \bar\varphi(\eta_\psi^x)
		&\ge
		c(x,\bar T(x)) - \bar\varphi(\bar T(x)) + \frac \gamma 2 \abs{\eta_\psi^x - \bar T(x)}^2
		,
		\\
		c(x,\bar T(x)) - \psi(\bar T(x))
		&\ge
		c(x,\eta_\psi^x) - \psi(\eta_\psi^x)
		.
	\end{align*}
	We add up these inequalities and arrive at
	\begin{align*}
		\frac \gamma 2 \abs{\eta_\psi^x - \bar T(x)}^2
		&\le
		\bar\varphi(\bar T(x)) - \bar\varphi(\eta_\psi^x)
		- \psi(\bar T(x)) + \psi(\eta_\psi^x)
		\\&
		=
		(\bar\varphi - \psi)(\bar T(x)) - (\bar\varphi - \psi)(\eta_\psi^x)
		\\&
		\le
		\lip( \bar\varphi - \psi) \abs{ \eta_\psi^x - \bar T(x) }
		\\&
		\le
		C_L \norm{ \bar\varphi - \psi }_{C^1(\omega_1)} \abs{ \eta_\psi^x - \bar T(x) }
		,
	\end{align*}
	where we used \Cref{lem:Lipschitz_constant_bounded_by_C1_norm} at the end.
	This implies
	$ \frac \gamma 2 \abs{\eta_\psi^x - \bar T(x)} \le C_L \norm{ \bar\varphi - \psi }_{C^1(\omega_1)}$
	and the claim follows.
\end{proof}

With this result at hand,
we can show the desired inequality.
\begin{lemma}
	\label{lem:descent_lemma}
	Let the Kantorovich potential $\bar\varphi \in C^1(\omega_1)$ satisfy \Cref{assu:regularity_assumptions_on_T}.
	Then, the descent condition
	\begin{equation}
		\label{eq:descent_inequality}
		J(\psi)
		\le
		J(\bar\varphi) + \dual{\bar u}{\psi - \bar\varphi}
		+ \frac {2 C_L^2 u_0(\omega_0)} \gamma \norm{\psi-\bar\varphi}_{C^1(\omega_1)}^2
	\end{equation}
	holds for all $\psi \in C^1(\omega_1)$.
\end{lemma}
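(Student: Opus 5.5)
The plan is a pointwise estimate in $x\in\omega_0$, followed by integration against $u_0$. Fix $\psi \in C^1(\omega_1)$ and $x \in \omega_0$. Choose a minimizer $\eta_\psi^x$ of $c(x,\cdot)-\psi$ on $\omega_1$; for measurability one can take $\eta_\psi^x = T(\psi)(x)$ from \cref{def:T_operator}, although only the values matter here. Then $\psi^{\bar c}(x) = c(x,\eta_\psi^x) - \psi(\eta_\psi^x)$. By \cref{assu:regularity_assumptions_on_T}, $\bar T(x)$ minimizes $c(x,\cdot)-\bar\varphi$, so $\bar\varphi^{\bar c}(x) = c(x,\bar T(x)) - \bar\varphi(\bar T(x))$.

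The key identity is
\begin{equation*}
	\bar\varphi^{\bar c}(x) - \psi^{\bar c}(x)
	=
	\bigl[c(x,\bar T(x)) - \psi(\bar T(x))\bigr] - \bigl[c(x,\eta_\psi^x) - \psi(\eta_\psi^x)\bigr]
	+ (\psi - \bar\varphi)(\bar T(x)).
\end{equation*}
Rewriting the bracketed difference and using the cost terms $c(x,\bar T(x))$ and $c(x,\eta_\psi^x)$, it equals
\[
	\bar\varphi^{\bar c}(x) + \bar\varphi(\bar T(x)) - \psi(\bar T(x)) - \psi^{\bar c}(x) \ldots
\]
More cleanly, I would write
\[
	\bar\varphi^{\bar c}(x) - \psi^{\bar c}(x) - (\psi-\bar\varphi)(\bar T(x))
	=
	\bigl[c(x,\bar T(x)) - c(x,\eta_\psi^x)\bigr] - \bar\varphi(\bar T(x)) + \psi(\eta_\psi^x) - \psi(\bar T(x)) + \bar\varphi(\bar T(x)).
\]
Then I would bound $c(x,\bar T(x)) - c(x,\eta^x_\psi) \le \bar\varphi(\bar T(x)) - \bar\varphi(\eta^x_\psi)$, which follows from the minimality of $\bar T(x)$ for $c(x,\cdot)-\bar\varphi$. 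This gives
\[
	\bar\varphi^{\bar c}(x) - \psi^{\bar c}(x) - (\psi-\bar\varphi)(\bar T(x))
	\le
	\bar\varphi(\bar T(x)) - \bar\varphi(\eta_\psi^x) - \psi(\bar T(x)) + \psi(\eta_\psi^x).
\]
The right-hand side is exactly the quantity controlled in \cref{lem:regularity_implies_nice}, so it is at most $\frac{2C_L^2}{\gamma}\norm{\psi-\bar\varphi}_{C^1(\omega_1)}^2$.

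Next I integrate over $\omega_0$ against $u_0 \ge 0$. Recall $J(\psi) - J(\bar\varphi) = \int (\bar\varphi^{\bar c} - \psi^{\bar c})\,\d u_0$. Since $\bar u = \bar T\#u_0$, as shown after \cref{assu:regularity_assumptions_on_T},
\[
	\int_{\omega_0} (\psi-\bar\varphi)(\bar T(x))\,\d u_0(x) = \int_{\omega_1}(\psi-\bar\varphi)\,\d\bar u = \dual{\bar u}{\psi-\bar\varphi}.
\]
The constant bound then contributes the factor $u_0(\omega_0)$, and \eqref{eq:descent_inequality} follows.

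The only delicate point is the integrability and measurability of the integrand. However, $\psi^{\bar c}$ and $\bar\varphi^{\bar c}$ are continuous, since $c$ is uniformly continuous, and $\bar T$ is continuous by \cref{lem:continuity_bar_T}. So everything is Borel measurable, and the pointwise bound integrates directly. The pointwise inequality itself is essentially a one-line rearrangement once \cref{lem:regularity_implies_nice} is available.
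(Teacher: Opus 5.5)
Your proof is correct and is essentially the paper's argument. You bound $\bar\varphi^{\bar c}(x)-\psi^{\bar c}(x)-(\psi-\bar\varphi)(\bar T(x))$ pointwise by the quantity controlled in \cref{lem:regularity_implies_nice}, then integrate against $u_0$ using $\bar u=\bar T\#u_0$. The paper reaches the pointwise bound in one step via $\bar\varphi^{\bar c}(x)\le c(x,\eta_\psi^x)-\bar\varphi(\eta_\psi^x)$, which is the same inequality as your minimality step; you should delete the unfinished display with the dots.
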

\begin{proof}
	We recall that \cref{assu:regularity_assumptions_on_T} implies
	$\bar u = \bar T \# u_0$, see above.
	Note that
	\begin{equation*}
		\dual{\bar u}{\psi - \bar\varphi}
		=
		\int_{\omega_1} (\psi - \bar\varphi)(\eta) \d \bar u(\eta)
		=
		\int_{\omega_0} (\psi - \bar\varphi)(\bar T(x)) \d u_0(x).
	\end{equation*}
	For every $x \in \omega_0$,
	let $\eta_\psi^x$ denote a minimizer of $c(x,\cdot) - \psi$.
	Then,
	by
	$\psi^{\bar c}(x) = c(x,\eta_\psi^x) - \psi(\eta_\psi^x)$ and
	$\bar\varphi^{\bar c}(x) \le c(x,\eta_\psi^x) - \bar\varphi(\eta_\psi^x)$,
	we obtain
	\begin{align*}
		-\psi^{\bar c}(x) + \bar\varphi^{\bar c}(x) - (\psi - \bar \varphi)(\bar T(x))
		&\le
		\psi(\eta_\psi^x) - \bar\varphi(\eta_\psi^x) - (\psi - \bar \varphi)(\bar T(x))
		\\&
		\le
		\frac {2 C_L^2} \gamma \norm{\psi - \bar\varphi}_{C^1(\omega_1)}^2
		,
	\end{align*}
	where we used \cref{lem:regularity_implies_nice} in the last inequality.
	Finally, we arrive at
	\begin{align*}
		J(\psi) - J(\bar\varphi) - \dual{\bar u}{\psi - \bar\varphi}
		&=
		\int_{\omega_0}
		-\psi^{\bar c}(x) + \bar\varphi^{\bar c}(x) - (\psi - \bar \varphi)(\bar T(x))
		\d u_0(x)
		\\
		&\le
		\frac {2 C_L^2} \gamma \norm{\psi - \bar\varphi}_{C^1(\omega_1)}^2 u_0(\omega_0)
		.
		\qedhere
	\end{align*}
\end{proof}

The \Cref{assu:regularity_assumptions_on_T} seems a little bit too strong,
as it is often enough to have a unique minimizer for $u_0$-a.a.\ $x \in \omega_0$.
Thus,
we weaken \Cref{assu:regularity_assumptions_on_T}
by only demanding quadratic growth of $c(x,\cdot) - \bar \varphi$
on a uniform ball around the minimizers $\bar T(x)$;
and demanding the function value to be above the minimal value plus some threshold everywhere else
while controlling that threshold in some way.
Note that this is similar to a frequently used regularity assumption
for bang-bang optimal control,
see
\cite[(2.10)]{DeckelnickHinze2012},
\cite[(3.3)]{WachsmuthWachsmuth2009}.

\begin{assumption}[Weaker regularity assumption]
	\label{assu:weaker_regularity_assumptions_on_T}
	We assume that (associated to $\bar u$)
	there exists a Kantorovich potential
	$\bar\varphi \in C^1(\omega_1)$,
	a Borel-measurable map $\bar T \colon \omega_0 \to \omega_1$,
	constants $\varepsilon, \gamma, C > 0$
	and a function
	$\delta \colon \omega_0 \to [0,\infty)$
	such that
	for all $x \in \omega_0$
	with $\bar\eta := \bar T(x)$
	we have
	\begin{subequations}
		\label{eq:weak_regularity}
		\begin{align}
			\label{eq:weak_regularity:1}
			c(x,\eta) - \bar\varphi(\eta)
			&\ge
			c(x,\bar\eta) - \bar\varphi(\bar\eta)
			+
			\frac \gamma 2 \abs*{\eta - \bar\eta}^2
			&& \forall \eta \in \omega_1 \cap B_\varepsilon(\bar\eta),
			\\
			\label{eq:weak_regularity:2}
			c(x,\eta) - \bar\varphi(\eta)
			&\ge
			c(x,\bar\eta) - \bar\varphi(\bar\eta) + \delta(x)
			&& \forall \eta \in \omega_1 \setminus B_\varepsilon(\bar\eta),
			\\
			\label{eq:weak_regularity:3}
			u_0(\set{x \in \omega_0 \given \delta(x) \le \kappa})
			&\le C \kappa
			&&\forall \kappa \ge 0.
		\end{align}
	\end{subequations}
\end{assumption}
We remark that
the minimizer of $c(x,\cdot) - \bar \varphi$ is not unique
if $\delta(x) = 0$,
however, this can only happen on a $u_0$-null set.
In case that the function $\delta$ is uniformly positive,
we recover \cref{assu:regularity_assumptions_on_T},
see the last part of the proof of \cref{lem:Hessian_spd_uniformly_implies_regularity}.

The next result shows that the assertion of \cref{lem:descent_lemma}
holds under the weaker regularity assumption
(with a different constant).
\begin{lemma}
	\label{lem:weak_descent_lemma}
	Let $\bar\varphi \in C^1(\omega_1)$ satisfy
	\Cref{assu:weaker_regularity_assumptions_on_T}.
	Then, there exists $\tilde C > 0$ such that
	\begin{equation}
		\label{eq:weak_descent_inequality}
		J(\psi)
		\le
		J(\bar\varphi) + \dual{\bar u}{\psi - \bar\varphi}
		+
		\frac{\tilde C}{2} \norm{\psi - \bar\varphi}_{C^1(\omega_1)}^2
		\qquad
		\forall \psi \in C^1(\omega_1)
		.
	\end{equation}
\end{lemma}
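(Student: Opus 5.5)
The plan is to repeat the pointwise argument from the proof of \cref{lem:descent_lemma}. The difference is that we now split $\omega_0$ into the points where the perturbed minimizer stays in the ball $B_\varepsilon(\bar T(x))$ and those where it leaves it. The second set will be controlled by the measure estimate \eqref{eq:weak_regularity:3}. Throughout, set $s := \norm{\psi - \bar\varphi}_{C^1(\omega_1)}$ and write $g(x) := -\psi^{\bar c}(x) + \bar\varphi^{\bar c}(x) - (\psi - \bar\varphi)(\bar T(x))$.

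\emph{Step 1: $\bar u = \bar T \# u_0$.} We first recover this identity under \cref{assu:weaker_regularity_assumptions_on_T}.
Applying \eqref{eq:weak_regularity:3} with $\kappa = 0$ gives $u_0(\set{\delta = 0}) = 0$. Hence $\delta(x) > 0$ for $u_0$-a.a.\ $x$, and by \eqref{eq:weak_regularity:1}--\eqref{eq:weak_regularity:2} the point $\bar T(x)$ is then the unique minimizer of $c(x,\cdot) - \bar\varphi$.
Let $\bar\pi$ be an optimal plan for $\bar u$. As before, $\bar\pi$ is concentrated on the zero set of the nonnegative continuous function $c(x,\eta) - \bar\varphi^{\bar c}(x) - \bar\varphi(\eta)$. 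Above $u_0$-a.a.\ $x$, this zero set is the single point $(x, \bar T(x))$. Disintegrating $\bar\pi$ with respect to its first marginal $u_0$ then yields $\bar\pi = (\id, \bar T)\# u_0$, and hence $\bar u = \bar T \# u_0$.
Consequently $\dual{\bar u}{\psi - \bar\varphi} = \int_{\omega_0} (\psi - \bar\varphi)(\bar T(x)) \d u_0(x)$, and
\[
	J(\psi) - J(\bar\varphi) - \dual{\bar u}{\psi - \bar\varphi} = \int_{\omega_0} g \d u_0 .
\]

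\emph{Step 2: pointwise bounds.} Fix $x$ and a minimizer $\eta := \eta_\psi^x$ of $c(x,\cdot) - \psi$. Exactly as in \cref{lem:descent_lemma},
\[
	g(x) \le (\psi - \bar\varphi)(\eta) - (\psi - \bar\varphi)(\bar T(x)) ,
\]
which is also bounded by $2\norm{\psi - \bar\varphi}_{C} \le 2s$.

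If $\eta \in B_\varepsilon(\bar T(x))$, then \eqref{eq:weak_regularity:1} may be used at $\eta$. The proof of \cref{lem:regularity_implies_nice} then goes through verbatim, because it only uses the growth inequality at the single point $\eta$ together with the global Lipschitz bound of \cref{lem:Lipschitz_constant_bounded_by_C1_norm}. This gives $g(x) \le \frac{2C_L^2}{\gamma} s^2$.

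If $\eta \notin B_\varepsilon(\bar T(x))$, we combine \eqref{eq:weak_regularity:2} with the minimality of $\eta$ for $c(x,\cdot) - \psi$:
\[
	\delta(x) \le \bigl(c(x,\eta) - \bar\varphi(\eta)\bigr) - \bigl(c(x,\bar T(x)) - \bar\varphi(\bar T(x))\bigr) \le (\psi - \bar\varphi)(\eta) - (\psi - \bar\varphi)(\bar T(x)) \le 2s .
\]
So the bad set is contained in $\set{\delta \le 2s}$, and on it we only use the crude bound $g \le 2s$.

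\emph{Step 3: integration.} Let $E$ denote the set of $x$ for which the selected minimizer leaves the ball. By Step 2, $E \subset \set{\delta \le 2s}$, so \eqref{eq:weak_regularity:3} with $\kappa = 2s$ gives
\[
	\int_{\omega_0} g \d u_0
	\le \frac{2C_L^2 u_0(\omega_0)}{\gamma} s^2 + 2s \, u_0(\set{\delta \le 2s})
	\le \Bigl(\frac{2C_L^2 u_0(\omega_0)}{\gamma} + 4C\Bigr) s^2 .
\]
This is \eqref{eq:weak_descent_inequality} with $\tilde C/2 := 2C_L^2 u_0(\omega_0)/\gamma + 4C$.

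For measurability, we choose $\eta = T(\psi)(x)$ from \cref{def:T_operator}, so that $E$ is Borel; $g$ is measurable since $\psi^{\bar c}$ is continuous and $\bar T$ is Borel. We read \eqref{eq:weak_regularity:3} as a statement about $u_0$-measurable sets, or else as an outer-measure bound, which suffices because we only need an upper bound.

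The main obstacle I expect is Step 1. Without the global quadratic growth, the uniqueness of $\bar T(x)$ holds only $u_0$-a.e., so the support argument has to be replaced by a disintegration argument. The descent estimate itself is then routine.
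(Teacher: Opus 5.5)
Your proof is correct and follows essentially the paper's argument: the set where the perturbed minimizer stays in $B_\varepsilon(\bar T(x))$ is handled by the local quadratic growth as in \cref{lem:regularity_implies_nice}, and the remaining set is handled by the crude bound $2\norm{\psi-\bar\varphi}_{C(\omega_1)}$ together with \eqref{eq:weak_regularity:3}. Two things differ from the paper. You phrase the split contrapositively, with threshold $2\norm{\psi-\bar\varphi}_{C(\omega_1)}$ instead of the paper's $\kappa = 3\norm{\psi-\bar\varphi}_{C(\omega_1)}$. Also, your Step~1 makes explicit the identity $\bar u = \bar T\#u_0$ under the weaker assumption, which the paper's proof only uses implicitly.
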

\begin{proof}
	Let $\psi \in C^1(\omega_1)$ be arbitrary.
	We define $\kappa := 3 \norm{\psi - \bar\varphi}_{C(\omega_1)}$
	and $M_\kappa := \set{x \in \omega_0 \given \delta(x) \le \kappa}$.
	For all $x \in \omega_0 \setminus M_\kappa$,
	we have $\delta(x) \ge \kappa$
	and thus for every $\eta \in \omega_1 \setminus B_\varepsilon(\bar T(x))$
	\eqref{eq:weak_regularity:2} yields
	\begin{align*}
		c(x,\bar T(x)) - \psi(\bar T(x))
		&
		\le
		c(x,\bar T(x)) - \bar\varphi(\bar T(x)) + \frac \kappa 3
		\\
		&\le
		c(x,\eta) - \bar\varphi(\eta) - \frac{2\kappa}3
		\le
		c(x,\eta) - \psi(\eta) - \frac{\kappa}3
		,
	\end{align*}
	i.e., $\eta$ cannot be a minimizer of $c(x, \cdot) - \psi$.
	Hence, every minimizer $\eta_\psi^x$ of $c(x,\cdot) - \psi$ belongs to $B_\varepsilon(\bar T(x))$.
	Thus, we can employ \eqref{eq:weak_regularity:1}
	for $\eta_\psi^x$ and, consequently,
	we obtain that \eqref{eq:perturbed_minimizer_inequalities} still holds.
	By arguing as in the proof of
	\cref{lem:descent_lemma},
	we find
	\begin{align*}
		\int_{\omega_0 \setminus M_\kappa}
		-\psi^{\bar c}(x) + \bar\varphi^{\bar c}(x) - (\psi - \bar \varphi)(\bar T(x))
		\d u_0(x)
		\le
		\frac {2 C_L u_0(\omega_0)} \gamma \norm{\psi - \bar\varphi}_{C^1(\omega_1)}^2
		.
	\end{align*}
	
	Next, we consider $x \in M_\kappa$.
	The definition of the $\bar c$-conjugate implies
	\begin{equation*}
		\bar\varphi^{\bar c}(x) \le c(x, \eta_\psi^x) - \bar\varphi(\eta_\psi^x)
		\qquad\text{and}\qquad
		\psi^{\bar c}(x) = c(x, \eta_\psi^x) - \psi(\eta_\psi^x)
		,
	\end{equation*}
	where $\eta_\psi^x$ is a minimizer of $c(x,\cdot) - \psi$.
	Consequently,
	\begin{align*}
		-\psi^{\bar c}(x) + \bar\varphi^{\bar c}(x)
		-
		(\psi - \bar\varphi)(\bar T(x))
		&\le
		(\psi - \bar\varphi)(\eta_\psi^x)
		-
		(\psi - \bar\varphi)(\bar T(x))
		\\&
		\le
		2 \norm{\psi - \bar\varphi}_{C(\omega_1)}
		.
	\end{align*}
	Integration and the application of \eqref{eq:weak_regularity:3} yield
	\begin{align*}
		\MoveEqLeft
		\int_{M_\kappa}
		-\psi^{\bar c}(x) + \bar\varphi^{\bar c}(x)
		-
		(\psi - \bar\varphi)(\bar T(x))
		\d u_0(x)
		\le
		2 \norm{\psi - \bar\varphi}_{C(\omega_1)} u_0(M_\kappa)
		\\
		&\le
		2 \norm{\psi - \bar\varphi}_{C(\omega_1)} C \kappa
		\le
		6 C \norm{\psi - \bar\varphi}_{C^1(\omega_1)}^2
		.
	\end{align*}
	Together with the inequality on $\omega_0 \setminus M_\kappa$,
	the claim follows.
\end{proof}

Under the (weak) regularity assumption,
we can characterize the subdifferential of $J$.
\begin{corollary}[Subdifferential of $J$]
	\label{lem:subdiff_J}
	Let \cref{assu:weaker_regularity_assumptions_on_T} be satisfied.
	Then,
	$\partial J(\bar\varphi) = \set{\bar u} \subset C^1(\omega)\dualspace$,
	see \eqref{eq:identification_bar_u},
	and $J$ is Gâteaux-differentiable at $\bar\varphi$.

	Moreover, for every $\varphi \in C^1(\omega_1)$,
	we have
	$\eqclass{T(\varphi) \# u_0}{0} \in \partial J(\varphi)$.
	In case that $c(x,\eta) = \abs{x - \eta}^p$ with $p \in (1,\infty)$
	and $u_0$ being absolutely continuous w.r.t.\ the Lebesgue measure on $\omega_0$,
	$\partial J(\varphi) = \set{\eqclass{T(\varphi) \# u_0}{0}}$.
\end{corollary}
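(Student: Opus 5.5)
For the first claim, I would combine the weak descent inequality of \cref{lem:weak_descent_lemma} with convexity of $J$. Let $\Phi \in \partial J(\bar\varphi)$ and $h \in C^1(\omega_1)$, and take $t > 0$. The subgradient inequality together with \eqref{eq:weak_descent_inequality} gives
\begin{equation*}
	t \dual{\Phi}{h}
	\le
	J(\bar\varphi + t h) - J(\bar\varphi)
	\le
	t \dual{\bar u}{h} + \frac{\tilde C}{2} t^2 \norm{h}_{C^1(\omega_1)}^2 .
\end{equation*}
Dividing by $t$ and letting $t \searrow 0$ yields $\dual{\Phi - \bar u}{h} \le 0$. Replacing $h$ by $-h$ gives equality, so $\Phi = \bar u$.

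Nonemptiness also has to be checked, namely $\bar u \in \partial J(\bar\varphi)$. This follows from the Fenchel--Young characterization \eqref{eq:Kantorovich_potential}, because $\bar\varphi$ is a Kantorovich potential for $\bar u$ and $J^* = \wdist$ by \cref{thm:J*_is_distance}. Gâteaux differentiability then follows because $J$ is convex and continuous (\cite[Lemma~A.2]{MeyerWachsmuth2025}) with a singleton subdifferential. Alternatively, it can be read off directly: the same two-sided estimate, using convexity below and \eqref{eq:weak_descent_inequality} above, shows that the difference quotient converges to $\dual{\bar u}{h}$.

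The second claim was essentially derived in the discussion after \cref{def:T_operator}. Integrating $\varphi^{\bar c}(x) = c(x,T(\varphi)(x)) - \varphi(T(\varphi)(x))$ against $u_0$ shows that $\varphi$ attains the dual value for $u(\varphi) = T(\varphi)\# u_0$. By \eqref{eq:Kantorovich_potential} and Fenchel--Young, this gives $\eqclass{u(\varphi)}{0} \in \partial J(\varphi)$. I would simply recall that argument here.

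For the uniqueness statement under $c = \abs{x-\eta}^p$ with $u_0 \ll \mathcal L^d$, I would use strict convexity of $D$ from \itemref{lem:properties_D:2}. Let $\Phi_1, \Phi_2 \in \partial J(\varphi)$. Since $J^* = \wdist$ is finite only on $\ran(\Pi)$ and $\partial J(\varphi) = \partial J^{*}{}^{-1}$ at $\varphi$, each $\Phi_i \in \dom \wdist$, so $\Phi_i = \eqclass{u_i}{0}$. Then $\varphi \in \partial \wdist(\Phi_i)$, which means $u_1$ and $u_2$ both minimize the convex functional $u \mapsto D(u) - \int \varphi \d u$. That set of minimizers is convex; if $u_1 \ne u_2$, the midpoint would give a strictly smaller value by strict convexity of $D$ (the linear part is affine), a contradiction. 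Hence $\partial J(\varphi)$ is a singleton and, by the second claim, equals $\{\eqclass{T(\varphi)\#u_0}{0}\}$.

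The main subtlety I expect is the domain question. Strict convexity is stated only on measures with mass $u_0(\omega_0)$, because $D = \infty$ otherwise, and the subgradients must lie in that domain. This holds because Fenchel--Young with finite $J(\varphi)$ forces $\wdist(\Phi_i) < \infty$.
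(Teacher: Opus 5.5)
Your proposal is correct and follows essentially the same route as the paper. It obtains the second claim from the discussion after \cref{def:T_operator}, and it gets uniqueness of the subgradient and G\^ateaux differentiability at $\bar\varphi$ by sandwiching the difference quotient between the subgradient inequality and the descent inequality \eqref{eq:weak_descent_inequality}. The singleton property in the $\abs{x-\eta}^p$ case then comes from $\varphi \in \partial\wdist(\Phi_i)$ together with the strict convexity of $D$. Your justification of $\bar u \in \partial J(\bar\varphi)$ directly from $\bar\varphi$ being a Kantorovich potential of $\bar u$ (via \eqref{eq:Kantorovich_potential} and Fenchel--Young) is, if anything, slightly more direct than the paper's appeal to the first part of its proof, which implicitly needs $T(\bar\varphi)\# u_0 = \bar u$.
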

\begin{proof}
	Let $\varphi \in C^1(\omega_1)$ be arbitrary.
	We already have seen that $\varphi$ is a Kantorovich potential for $u := T(\varphi) \# u_0$
	and $\eqclass{u}{0} \in \partial J(\varphi)$,
	see the discussion after \cref{def:T_operator}.
	This yields the second claim.

	To see the first claim,
	we first note that
	$\bar u = \eqclass{\bar u}{0} \in \partial J(\bar\varphi)$
	due to the first part of the proof.
	Consequently, we have
	$J'(\bar\varphi; \psi) \geq \dual{\bar u}{\psi}_{C(\omega_1)}$
	for all $\psi \in C^1(\omega_1)$. On the other hand, \eqref{eq:weak_descent_inequality} implies 
	$J'(\bar\varphi; \psi) \leq \dual{\bar u}{\psi}_{C(\omega_1)}$ and thus 
	$J'(\bar\varphi; \psi) = \dual{\bar u}{\psi}_{C(\omega_1)}$
	for all $\psi \in C^1(\omega_1)$.
	This implies that $J$ is Gâteaux differentiable at $\bar\varphi$
	and, consequently, the uniqueness of the subgradient.

	Finally, in the case of $c(x,\eta) = \abs{x - \eta}^p$ and $u_0$ being absolutely continuous 
	w.r.t.\ the Lebesgue measure,
	the cost functional $D$ is strictly convex, cf.\ \itemref{lem:properties_D:2}.
	
		This implies uniqueness of subgradients of $J$ as follows.
		Let $\varphi \in C^1(\omega_1)$ and $\Phi_1, \Phi_2 \in \partial J(\varphi)$ be given.
		This implies $\varphi \in \partial \DD(\Phi_1) \cap \partial \DD(\Phi_2)$.
		In particular, this shows $\Phi_1, \Phi_2 \in \dom(\DD)$,
		i.e., $\Phi_1 = \Pi u_1$ and $\Phi_2 = \Pi u_2$ for some measures $u_1, u_2 \in \MM(\omega_1)$.
		Consequently,
		\begin{equation*}
			D(u)
			=
			\DD(\Pi u)
			\ge
			\DD(\Phi_i) + \dual{ \Pi u - \Phi_i }{ \varphi }_{C^1(\omega_1)}
			=
			D(u_i) + \dual{ u - u_i }{ \varphi }_{C(\omega_1)}
		\end{equation*}
		holds for any measure $u \in \MM(\omega_1)$ and all $i \in \set{1,2}$.
		This implies that $D$ is an affine function on the line segment from $u_1$ to $u_2$.
		The strict convexity of $D$ implies that $u_1 = u_2$.
\end{proof}

Using \cref{lem:weak_descent_lemma},
we can show the satisfaction of the crucial inequality which is needed for
the verification of the non-degeneracy condition,
cf.\ \cref{lem:ndc_sufficient}.
\begin{theorem}
	\label{thm:NDC_D_inequality}
	Let \Cref{assu:weaker_regularity_assumptions_on_T} be satisfied.
	Then,
	\begin{equation*}
		\wdist(u)
		\ge
		\wdist(\bar u) + \dual{u - \bar u}{\bar\varphi}
		+
		\frac{1}{2 \tilde C} \norm{u - \bar u}_{C^1(\omega_1)\dualspace}^2
		\qquad
		\forall u \in C^1(\omega_1)\dualspace
		.
	\end{equation*}
\end{theorem}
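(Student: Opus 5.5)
The plan is to dualize the descent inequality of \cref{lem:weak_descent_lemma}, in the spirit of \cite[Lemma~4.9]{WachsmuthWachsmuth2022}, using that $\wdist = J\conjugate$ by \cref{thm:J*_is_distance}.

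First, fix $u \in C^1(\omega_1)\dualspace$. If $\wdist(u) = \infty$ there is nothing to prove, but the argument below works uniformly, so no case split is needed. By the Fenchel--Young identity, since $\bar\varphi$ is a Kantorovich potential of $\bar u$ (equivalently $\bar u \in \partial J(\bar\varphi)$, see \eqref{eq:Kantorovich_potential}), we have
\begin{equation*}
	\wdist(\bar u) = J\conjugate(\bar u) = \dual{\bar u}{\bar\varphi} - J(\bar\varphi).
\end{equation*}
Next, for arbitrary $\psi \in C^1(\omega_1)$, the definition of the conjugate together with \eqref{eq:weak_descent_inequality} gives
\begin{align*}
	\wdist(u)
	&\ge \dual{u}{\psi} - J(\psi)
	\\
	&\ge \dual{u}{\psi} - J(\bar\varphi) - \dual{\bar u}{\psi - \bar\varphi} - \frac{\tilde C}{2}\norm{\psi - \bar\varphi}_{C^1(\omega_1)}^2 .
\end{align*}
Write $\psi = \bar\varphi + h$ and insert the Fenchel--Young identity. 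This yields
\begin{equation*}
	\wdist(u) \ge \wdist(\bar u) + \dual{u - \bar u}{\bar\varphi} + \dual{u - \bar u}{h} - \frac{\tilde C}{2}\norm{h}_{C^1(\omega_1)}^2 .
\end{equation*}

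Finally, take the supremum over $h \in C^1(\omega_1)$. Choose $h = s\, g$ with $\norm{g}_{C^1(\omega_1)} = 1$ and $s \ge 0$. Then
\begin{equation*}
	\sup_s \bigl( s \dual{u - \bar u}{g} - \tfrac{\tilde C}{2} s^2 \bigr) = \frac{\dual{u - \bar u}{g}^2}{2\tilde C}
\end{equation*}
whenever the pairing is nonnegative, which can always be arranged by replacing $g$ with $-g$. Taking the supremum over unit vectors $g$ then produces $\frac{1}{2\tilde C}\norm{u - \bar u}_{C^1(\omega_1)\dualspace}^2$, since the dual norm is defined as $\sup_{\norm{g}=1} \dual{u - \bar u}{g}$.

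The only subtle point is the Fenchel--Young identity for $\bar u$. This needs $\bar\varphi$ to be a Kantorovich potential for $\bar u$ in the sense of \eqref{eq:Kantorovich_potential}, together with the identification \eqref{eq:identification_bar_u}. Both are given by \cref{assu:weaker_regularity_assumptions_on_T}; alternatively one can invoke $\partial J(\bar\varphi) = \set{\bar u}$ from \cref{lem:subdiff_J}. Everything else is the elementary quadratic optimization above.
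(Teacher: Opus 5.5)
Your proof is correct and is essentially the paper's argument. The paper combines $\wdist = J\conjugate$ and the descent inequality of \cref{lem:weak_descent_lemma} with the conjugation argument of \cite[Lemma~4.9]{WachsmuthWachsmuth2022}, which is exactly the Fenchel--Young identity followed by the conjugate of $\frac{\tilde C}{2}\norm{\cdot}_{C^1(\omega_1)}^2$ that you carry out explicitly (and, as you note, only $\bar u \in \partial J(\bar\varphi)$ is needed there, not the full Gâteaux differentiability the paper mentions).
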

\begin{proof}
	As $\wdist = J\dualspace$ by \cref{thm:J*_is_distance},
	$J$ is Gâteaux-differentiable at $\bar\varphi$
	and
	the descent inequality for $J$ holds by \Cref{lem:weak_descent_lemma},
	we can argue exactly as in \cite[Lemma~4.9]{WachsmuthWachsmuth2022}
	in order to obtain the ascent property.
\end{proof}

\begin{remark}\label{rem:strongerassu}
    We emphasize that the above theorem clearly also holds under the stronger
    \cref{assu:regularity_assumptions_on_T}. Thus, at a fist glance, 
    \cref{assu:regularity_assumptions_on_T} now seems to be superfluous in our context, 
    and this is indeed true as long as one is only interested in the non-degeneracy condition 
    \eqref{NDC}. For the computation of a second directional derivative of $J$ 
    (which will ultimately allow us to compute the weak-$\star$ second subderivative of $D$)
    however, we need to resort to the stronger \cref{assu:regularity_assumptions_on_T}, 
    see \cref{rem:J''} below.
\end{remark}

Finally,
we present an example which indicates that the space $C^1(\omega_1)$
cannot be replaced by less regular spaces
in the analysis above.

\begin{example}
	\label{ex:counter_example_inequality}
	In this counterexample, we show that an inequality of type
	\begin{equation*}
		J(\psi) - J(\bar\varphi) - \dual{\bar u}{\psi - \bar\varphi}
		\le
		C \norm{\psi - \bar\varphi}_{Y}^2
	\end{equation*}
	cannot hold
	even in very regular situations
	for the spaces $Y = C(\omega_1)$,
	$Y = C^{0,\alpha}(\omega_1)$ in case $\alpha \in (0,1)$,
	and $Y = W^{1,q}(\omega_1)$ in case $q \le d$ and $d \ge 2$.
	We consider $c(x,\eta) = \abs{x - \eta}^2$,
	$\bar\varphi \equiv 0$,
	$\omega_0 = \omega_1$ is an arbitrary compact set 
	with positive Lebesgue measure
	(satisfying our standing assumption)
	and $u_0$ is the scaled Lebesgue measure on $\omega_0$,
	i.e., $u_0(\omega_0) = 1$.
	In view of $\bar T(x) = \argmin_{\eta \in \omega_1} |x - \eta|^2 - \bar\varphi(\eta)$,
	it is clear that $x = \bar T(x)$,
	i.e., $\bar u = u_0$,
	and
	\Cref{assu:regularity_assumptions_on_T} is satisfied (with equality) with $\gamma = 2$.
	
	We start by dealing with the simplest case $Y = C(\omega_1)$.
	Let $P \subset \omega_1$ be a finite set, such that
	$\dist(P, \cdot)$ is bounded on $\omega_1$ by a small constant $r > 0$.
	For constants $\varepsilon = 2 r^2$ and $M > 0$, we consider the function
	$\psi \in C(\omega_1)$ defined via
	\begin{equation*}
		\psi(\eta) := \max\set{0, \varepsilon - M \dist(P, \eta) }.
	\end{equation*}
	Note that $\norm{\psi}_{C(\omega_1)} = \varepsilon$.
	For an arbitrary $x \in \omega_0$,
	let $p \in P$ be a projection of $x$ onto $P$.
	Then
	$ \psi^{\bar c}(x) = \inf_{\eta \in \omega_1} c(x,\eta) - \psi(\eta) \le c(x,p) - \psi(p) \le r^2 - \varepsilon = -\varepsilon / 2$.
	Consequently,
	\begin{align*}
		\MoveEqLeft
		J(\psi) - J(\bar\varphi) - \dual{\bar u}{\psi - \bar\varphi}
		\\
		&=
		\int_{\omega_0}
		-\psi^{\bar c}(x)
		+
		\bracks{ c(x, \bar T(x)) - \bar\varphi(\bar T(x)) }
		-
		(\psi - \bar\varphi)(\bar T(x))
		\d u_0(x)
		\\
		&\ge
		\int_{\omega_0}
		-\bracks{ r^2 - \varepsilon }
		-
		\psi(\bar T(x))
		\d u_0(x)
		=
		\frac \varepsilon 2
		-
		\int_{\omega_0}
		\psi(x)
		\d u_0(x)
		.
	\end{align*}
	By choosing $M > 0$ large enough,
	the integral over $\psi$ becomes smaller than $\varepsilon / 4$.
	This shows that
	\begin{equation*}
		J(\psi) - J(\bar\varphi) - \dual{\bar u}{\psi - \bar\varphi}
		\le
		C \norm{\psi - \bar\varphi}_{C(\omega_1)}^2
	\end{equation*}
	cannot hold, since the left-hand side is bounded from below by $\varepsilon / 4$,
	the right-hand side is of order $\varepsilon^2$,
	and $\varepsilon = 2 r^2$ can be made arbitrarily small.

	In order to handle the Hölder continuous functions
	$Y = C^{0,\alpha}(\omega_1)$, $\alpha \in (0,1)$,
	we have to be a little bit more careful concerning the choice of $P$.
	In fact, it is possible to select $P \subset \omega_1$ as above
	such that it contains at most $C_d r^{-d}$ points. 
	Here, $C_d$ is some constant (depending on the dimension and on the fixed set $\omega_1$).
	Consequently,
	the graph of the function $\psi$ (defined as above)
	consists of (parts of) $C_d r^{-d}$ small cones
	with height $\varepsilon = 2 r^2$
	and a spherical base with radius $\varepsilon / M$.
	The volume of each cone is $\hat C_d \varepsilon (\varepsilon/M)^{d}$
	(again, $\hat C_d$ depends on the dimension $d$).
	If there is an overlap among cones,
	the integral only becomes smaller,
	thus
	\begin{equation*}
		\int_{\omega_0} \psi \d u_0
		\le
		C_d r^{-d} \hat C_d \varepsilon (\varepsilon / M)^{d}
		=
		C_d \hat C_d 2^{d/2}
		\varepsilon^{-d/2 + 1 + d} M^{- d}
		=:
		C \varepsilon^{d/2 + 1} M^{ - d}
	\end{equation*}
	By choosing $M := \tilde C \varepsilon^{1/2} := \sqrt[d]{4C} \varepsilon^{1/2}$ we can achieve
	\begin{equation*}
		J(\psi) - J(\bar\varphi) - \dual{\bar u}{\psi - \bar\varphi}
		\ge
		\frac\varepsilon2
		-
		\int_{\omega_0} \psi \d u_0
		\ge
		\frac \varepsilon 2
		-
		C \varepsilon^{d/2 + 1} M^{ - d}
		=
		\frac\varepsilon4
		.
	\end{equation*}
	It remains to estimate the Hölder norm of $\psi$.
	It can be checked that
	\begin{equation*}
		\psi(\eta)
		=
		\max\set*{
			\max\set{ 0, \varepsilon - M \abs{\eta - p} }
			\given
			p \in P
		}
	\end{equation*}
	and the maximum of Hölder continuous functions is
	again Hölder continuous
	with Hölder constant bounded by the maximum of the Hölder constants.
	Further, it is easy to check that the Hölder constant of the function $\psi_p \colon \R^d \to \R$,
	defined via
	$\psi_p(\eta) = \max\set{ 0, \varepsilon - M \abs{\eta - p} }$, is
	\begin{equation*}
		\abs{\psi_p}_{C^{0,\alpha}(\R^d)}
		=
		\sup\set*{
			\frac{\psi_p(\eta_1) - \psi_p(\eta_2)}{\abs{\eta_1 - \eta_2}^\alpha}
			\given
			\eta_1 \ne \eta_2
		}
		=
		\frac{ \varepsilon - 0 }{ \abs{\varepsilon / M}^\alpha }
		=
		\tilde C^\alpha \varepsilon^{1-\alpha/2}
		.
	\end{equation*}
	Consequently,
	$\abs{\psi}_{C^{0,\alpha}(\R^d)}$ is bounded by the same constant
	and
	\begin{equation*}
		\norm{\psi}_{C^{0,\alpha}(\omega_1)}
		=
		\max\set*{ \norm{\psi}_{C(\omega_1)}, \abs{\psi}_{C^{0,\alpha}(\omega_1)} }
		=
		\tilde C^\alpha \varepsilon^{1-\alpha/2}
	\end{equation*}
	for $\varepsilon > 0$ small enough.
	This implies
	that the inequality
	\begin{equation*}
		J(\psi) - J(\bar\varphi) - \dual{\bar u}{\psi - \bar\varphi}
		\le
		C \norm{\psi - \bar\varphi}_{C(\omega_1)}^2
	\end{equation*}
	cannot hold since the left-hand side is bounded from below by $\varepsilon/4$
	while the right-hand side is of order $\varepsilon^{2 - \alpha}$
	with
	$2 - \alpha > 1$ due to $\alpha \in (0,1)$.

	Finally, we address the case of
	Sobolev spaces.
	We assume that $d \ge 2$ and that $\omega_1 \subset \R^d$ is regular enough,
	i.e., the interior $\interior(\omega_1)$ is a Lipschitz domain
	and $\omega_1$ equals the closure of its interior.
	We set $Y = W^{1,q}(\omega_1) := W^{1,q}(\interior(\omega_1))$ with $q \le d$.
	Next, we fix $p \in \interior(\omega_1)$
	and denote by $D$ the diameter of $\omega_1$.
	Due to $q \le d$,
	a standard construction
	similar to \cite[Section~9.3, Remark~16]{Brezis2011}
	yields a function $\psi \in C_c^\infty(\interior(\omega_1)) \subset W^{1,q}(\omega_1)$
	with arbitrary small $\norm{\psi}_{W^{1,q}(\omega_1)}$
	such that $\psi(p) = 2 D^2$ and $0 \le \psi \le 2 D^2$ on $\omega_1$.
	By arguing as above, we find
	$ \psi^{\bar c}(x) = \inf_{\eta \in \omega_1} c(x,\eta) - \psi(\eta) \le c(x,p) - \psi(p) \le -D^2$
	and
	\begin{equation*}
		J(\psi) - J(\bar\varphi) - \dual{\bar u}{\psi - \bar\varphi}
		\ge
		\int_{\omega_0}
		-\psi^{\bar c}(x)
		-
		\psi (\bar T(x))
		\d u_0(x)
		\ge
		D^2
		-
		\int_{\omega_0}
		\psi (x)
		\d u_0(x)
		.
	\end{equation*}
	Note that the integral of $\psi$
	can be bounded (up to a constant) by its $W^{1,q}(\omega_1)$ norm.
	Thus, we see again that
	\begin{equation*}
		J(\psi) - J(\bar\varphi) - \dual{\bar u}{\psi - \bar\varphi}
		\le
		C \norm{ \psi - \bar\varphi }_{W^{1,q}(\omega_1)}^2
	\end{equation*}
	cannot hold, since 
	$\norm{ \psi - \bar\varphi }_{W^{1,q}(\omega_1)} = \norm{ \psi }_{W^{1,q}(\omega_1)}$
	can be made arbitrarily small.
\end{example}

\section{Computation of the weak-\texorpdfstring{$\star$}{*} second subderivative and epi-differentiability}
\label{sec:second_subderivative}

The aim of this section is to compute the weak-$\star$ second subderivative of the transport cost $\DD$ and to show its weak-$\star$ epi-differentiability.
To this end, we follow an idea of \cite{ChristofWachsmuth2019},
which is based on the following result.

\begin{lemma}[{\cite[Lem.~3.2]{ChristofWachsmuth2019}, \cite[Lem.~2.8]{BorchardWachsmuth2024}}]
	\label{lem:density_lemma}
	Let $x \in \dom(G)$ and $w \in Y$ be given.
	We suppose the existence of a set $V \subset X$
	and a functional $Q\colon X \to [-\infty,\infty]$ such that
	\begin{enumerate}
		\item\label{lem:density_lemma:1}
		for all $h \in X$ we have $G''(x,w;h) \ge Q(h)$,
		\item\label{lem:density_lemma:2}
		for all $h \in V$ and all $\seq{t_k} \subset \R^+$ with $t_k \to 0$,
		there exists a sequence $\seq{h_k} \subset X$ satisfying
		$h_k \weaklystar h$, $\norm{h_k}_X \to \norm{h}_X$, and
		\begin{align*}
			Q(h) = \lim_{k \to \infty} \frac{G(x + t_k h_k) - G(x) - \dual{w}{h_k}}{t_k^2 / 2} \in [-\infty,\infty],
		\end{align*}
		\item\label{lem:density_lemma:3}
		for all $h \in X$
		with $Q(h) < \infty$
		there exists a sequence $\seq{h^l} \subset V$
		with $h^l \weaklystar h$,
		$\norm{h^l}_X \to \norm{h}_X$
		and $Q(h) \ge \liminf_{l \to \infty} Q(h^l)$.
	\end{enumerate}
	Then, $Q = G''(x,w;\cdot)$
	and $G$ is strictly twice epi-differentiable at $x$ for $w$.
\end{lemma}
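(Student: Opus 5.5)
We show that for every $h \in X$ and every $\seq{t_k} \subset \R^+$ with $t_k \to 0$ there is a sequence $h_k \weaklystar h$ with $\norm{h_k}_X \to \norm{h}_X$ and
\[
	\limsup_{k\to\infty} \frac{G(x + t_k h_k) - G(x) - t_k\dual{w}{h_k}}{t_k^2/2} \le Q(h).
\]
Here we read the pairing in \ref{lem:density_lemma:2} as $t_k\dual{w}{h_k}$, in line with \cref{def:wsss}. Combining this $\limsup$ bound with \ref{lem:density_lemma:1} and the definition of $G''$ as an infimum of $\liminf$s gives the chain
\[
	Q(h) \le G''(x,w;h) \le \liminf_k(\dots) \le \limsup_k(\dots) \le Q(h).
\]
So all terms coincide, $Q = G''(x,w;\cdot)$, and the constructed sequence is a strict recovery sequence. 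This proves both claims at once.

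We split into three cases.

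\emph{Case $Q(h) = \infty$.} By \ref{lem:density_lemma:1}, $G''(x,w;h) = \infty$. Hence for the constant sequence $h_k := h$, the $\liminf$ of the quotients is $\ge G''(x,w;h) = \infty$, so the quotients tend to $\infty = Q(h)$. Trivially $h_k \weaklystar h$ and $\norm{h_k}_X \to \norm{h}_X$.

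\emph{Case $h \in V$.} Assumption \ref{lem:density_lemma:2} provides the recovery sequence directly.

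\emph{Case $Q(h) < \infty$, general $h$.} This is the main case and needs a diagonal argument.

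First take $\seq{h^l} \subset V$ from \ref{lem:density_lemma:3}. Passing to a subsequence, we may assume that $\lim_l Q(h^l)$ exists in $[-\infty,\infty)$ and is $\le Q(h)$; to get this, pick a subsequence realizing the $\liminf$.

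Since $\norm{h^l}_X \to \norm{h}_X$, all relevant elements lie in the ball $B := B_R(0)$ with $R := \norm{h}_X + 2$. Because $Y$ is separable, the weak-$\star$ topology on $B$ is metrizable; let $d$ be a metric for it.

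For each $l$, apply \ref{lem:density_lemma:2} to $h^l$ and the given $\seq{t_k}$, obtaining a sequence $\seq{h^l_k}_k$. Let $q^l_k$ denote the corresponding difference quotient. Choose $K(l)$, increasing in $l$, such that for all $k \ge K(l)$:
\begin{itemize}
	\item $d(h^l_k,h^l) \le 1/l$;
	\item $\abs{\norm{h^l_k}_X - \norm{h^l}_X} \le 1/l$ (so in particular $h^l_k \in B$);
	\item $q^l_k \le \max\{Q(h^l), -l\} + 1/l$.
\end{itemize}

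Define $l(k) := \max\{l \le k : K(l) \le k\}$, with $l(k) := 1$ if this set is empty, and set $h_k := h^{l(k)}_k$. Then $l(k)$ is nondecreasing and $l(k) \to \infty$.

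It follows that
\[
	d(h_k,h) \le 1/l(k) + d(h^{l(k)},h) \to 0,
\]
so $h_k \weaklystar h$. Likewise $\norm{h_k}_X \to \norm{h}_X$. Finally,
\[
	\limsup_k q^{l(k)}_k \le \lim_l \max\{Q(h^l),-l\} \le Q(h),
\]
which is the required $\limsup$ bound.

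The only delicate point is this diagonalization. It needs the uniform norm bound, so that weak-$\star$ convergence is metrizable on $B$, and it needs the preliminary passage to a subsequence, because \ref{lem:density_lemma:3} only controls $\liminf_l Q(h^l)$. The possible value $-\infty$ of $Q(h^l)$ is handled by the truncation $\max\{Q(h^l),-l\}$.
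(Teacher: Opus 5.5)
Your proof is correct and takes essentially the same route as the paper, which cites this lemma and mirrors its proof in \cref{thm:second_subdiff_W22_epidiff}: you sandwich $Q(h)\le G''(x,w;h)$ against a diagonal sequence built from recovery sequences of approximants $h^l\in V$, taken along a subsequence realizing $\liminf_l Q(h^l)$, and you control weak-$\star$ convergence on a bounded set using separability of $Y$. The differences are cosmetic: you use a metric on a ball where the paper uses a countable dense set of test functions, you treat the values $\pm\infty$ of $Q$ explicitly, and you correctly supply the missing factor $t_k$ in front of $\dual{w}{h_k}$.
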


As we will see below, we cannot directly apply this result, since we were not able 
to establish the existence of sequences $\seq{h_k}$, $\seq{h^l}$ fulfilling
the convergence of norms required in 
\itemref{lem:density_lemma:2} and \ref{lem:density_lemma:3}.
Nevertheless, it turns out that the weak-$\star$ convergence
of the sequences
together with some uniform bound on the norms
is sufficient for our purpose and \cref{lem:density_lemma} serves as a good guide for the 
computation of the weak-$\star$ second subderivative of $\DD$.
A crucial ingredient in \cref{lem:density_lemma} is the functional $Q$ which provides a lower bound.
We construct this functional via the pre-conjugate functional $J$.
To be precise, we
verify that $J$ is twice directionally Hadamard differentiable at $\bar\varphi$, see \Cref{subsec:J''},
and
\cite[Lemma~4.4]{WachsmuthWachsmuth2022} will provide us with the function $Q$.
Afterwards,
the weak-$\star$ second subderivative of $\DD$ is calculated in \cref{subsec:W''}.

Throughout this section,
we make use of the following assumption.

\begin{assumption}
	\label{assu:varphireg}
	The transportation cost $c$ is twice continuously differentiable on $\omega_0 \times \omega_1$. 
	The optimal control satisfies $\supp(\bar u) \subset \interior(\omega_1)$ and 
	the Kantorovich potential $\bar\varphi \in C^1(\omega_1)$ is twice continuously differentiable on
	$\supp(\bar u) + B_{\hat \varepsilon}(0) \subset \interior(\omega_1)$ for some $\hat \varepsilon > 0$.
	Moreover, we assume that the regularity condition \Cref{assu:regularity_assumptions_on_T} is satisfied
	for $\bar\varphi$.
\end{assumption}

Since $\bar T$ is continuous due to
\cref{lem:continuity_bar_T},
we can apply
\cite[Lemma~B.3]{MeyerWachsmuth2025}
and obtain
\begin{equation}
	\label{eq:support}
	\supp(\bar u)
	=
	\bar T(\supp(u_0))
	=
	\bar T(\omega_0)
	,
\end{equation}
where we used
$\bar u = \bar T \# u_0$ and
\itemref{assu:standing_assumptions:4}.

\subsection{Computing \texorpdfstring{$J''$}{J''}}
\label{subsec:J''}
We return to the pre-conjugate function $J$ from \eqref{eq:predualobj}.
	We already have seen in \cref{lem:subdiff_J}
	that $J$ is Gâteaux differentiable at $\bar\varphi$
	and $\bar u = J'(\bar\varphi)$.
	In what follows, we show that $J$ is even
	\emph{twice directionally Hadamard differentiable}
	at $\bar\varphi$
	in the sense
	that the limit
	\begin{equation*}
		J''(\bar\varphi; \psi)
		:=
		\liminf_{k\to\infty}
		\frac{J(\bar \varphi + t_k \psi_k) - J(\bar \varphi) - t_k \dual{\bar u}{\psi_k}}{t_k^2 /2}
	\end{equation*}
	exists for all sequences
	$\seq{t_k} \subset \R^+$ and $\seq{\psi_k} \subset C^1(\omega_1)$
	such that
	$t_k \to 0$ and $\psi_k \to \psi$ in $C^1(\omega_1)$.
	Note that
	the notation slightly differs from
	the weak-$\star$ second subderivative
	introduced in \cref{def:wsss},
	which would be denoted as $J''(\bar\varphi, \bar u; \cdot)$.

The term $J(\bar\varphi + t_k \psi_k)$ contains the $\bar c$-conjugate
of $\bar\varphi + t_k \psi_k$, see \eqref{eq:predualobj}.
Hence, we have to investigate the perturbed optimization problems
$\min_{\eta \in \omega_1} c(x,\eta) - (\bar\varphi + t_k \psi_k)(\eta)$.
For this,
the following lemma will be essential.

\begin{lemma}
	\label{lem:sensitivity}
    Let $\omega \subset \R^d$ be compact and $f \in C^1(\omega)$.
    Suppose that $\bar\eta \in \interior(\omega)$ is the unique solution of 
    \begin{equation*}
        \min_{\eta \in \omega} \; f(\eta).
    \end{equation*}
    Additionally,
    we assume that $f$ is
    twice differentiable at $\bar\eta$ and
    strongly convex with constant $\hat\gamma > 0$ on 
    $B_\varepsilon(\bar\eta) \subset \interior(\omega)$ for some $\varepsilon > 0$, i.e., 
    \begin{equation}\label{eq:funiconv}
        \dual{\nabla f(\eta_1) - \nabla f(\eta_2)}{\eta_1 - \eta_2}
        \geq
        \hat\gamma \, \abs{\eta_1 - \eta_2}^2
        \quad
        \forall\, \eta_1, \eta_2 \in B_\varepsilon(\bar\eta).
    \end{equation}
    Finally, we assume the existence of a constant $\delta > 0$ such that 
    \begin{equation}\label{eq:xiglobopt}
        f(\eta) \ge f(\bar\eta) + \delta
        \quad
        \forall\, \eta \in \omega \setminus B_\varepsilon(\bar\eta).
    \end{equation}
    We define the set-valued mapping $S \colon C^1(\omega) \rightrightarrows \omega$
    via
    \begin{equation*}
        S(v)
        :=
        \argmin_{\eta \in \omega} v(\eta) \subset \omega
        \qquad\forall v \in C^1(\omega)
        .
    \end{equation*}
    Then,
    $S$ is calm at $f$,
    i.e.,
    for arbitrary $v \in C^1(\omega)$ with $\norm{v}_{C^1(\omega)} < \delta/2$
    and arbitrary $\eta \in S(f + v)$ it holds
    \begin{equation}
    	\label{eq:calm_solution_operator}
    	\abs{\eta - \bar \eta}
    	\le
			\frac 1 {\hat\gamma}
    	\norm{v}_{C^1(\omega)}
    	.
    \end{equation}
    Further, let a null sequence $\seq{t_k}  \subset \R^+$ and
    a sequence $\seq{h_k} \subset C^1(\omega)$ with $h_k \to h$ in $C^1(\omega)$ be given 
    and, for every $k\in \N$,
    pick an arbitrary $\eta_k \in S(f + t_k h_k)$.
    Then,
    \begin{equation}\label{eq:taylorsensitivity}
        \eta_k = \bar\eta + t_k\, \theta + \oo(t_k)
        \qquad\text{as } k \to \infty
    \end{equation}
    with $\theta = - \nabla^2 f(\bar\eta)^{-1} \nabla h(\bar\eta)$.
\end{lemma}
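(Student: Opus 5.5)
The plan has two steps: localize the perturbed minimizers to the ball $B_\varepsilon(\bar\eta)$ using \eqref{eq:xiglobopt}, and then apply strong monotonicity of $\nabla f$ there. The expansion \eqref{eq:taylorsensitivity} will follow from the calmness estimate combined with the first-order condition and twice differentiability of $f$ at $\bar\eta$.

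\emph{Calmness.} Let $\norm{v}_{C^1(\omega)} < \delta/2$ and $\eta \in S(f+v)$. If $\eta \notin B_\varepsilon(\bar\eta)$, then by \eqref{eq:xiglobopt}
\begin{equation*}
	(f+v)(\eta) \ge f(\bar\eta) + \delta - \norm{v}_{C(\omega)} > f(\bar\eta) + \norm{v}_{C(\omega)} \ge (f+v)(\bar\eta),
\end{equation*}
which contradicts minimality. Hence $\eta \in B_\varepsilon(\bar\eta) \subset \interior(\omega)$, so $\eta$ is an interior minimizer and $\nabla f(\eta) + \nabla v(\eta) = 0$. Likewise $\nabla f(\bar\eta) = 0$. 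Inserting these into \eqref{eq:funiconv} gives
\begin{equation*}
	\hat\gamma\abs{\eta-\bar\eta}^2 \le \dual{\nabla f(\eta)-\nabla f(\bar\eta)}{\eta-\bar\eta} = -\dual{\nabla v(\eta)}{\eta-\bar\eta} \le \norm{v}_{C^1(\omega)}\abs{\eta-\bar\eta}.
\end{equation*}
This yields \eqref{eq:calm_solution_operator}. I am implicitly assuming that the norm of $\nabla v$ is the Euclidean sup norm, as in the paper's definition of $\norm{\cdot}_{C^1(\omega)}$.

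\emph{Expansion.} Since $t_k h_k \to 0$ in $C^1(\omega)$, for large $k$ we have $\norm{t_k h_k}_{C^1} < \delta/2$. Calmness then gives $\abs{\eta_k-\bar\eta} \le \hat\gamma^{-1} t_k\norm{h_k}_{C^1} = O(t_k)$. As above, $\eta_k$ is interior, so $\nabla f(\eta_k) + t_k\nabla h_k(\eta_k) = 0$. Because $f$ is twice differentiable at $\bar\eta$ and $\nabla f(\bar\eta)=0$,
\begin{equation*}
	\nabla f(\eta_k) = \nabla^2 f(\bar\eta)(\eta_k-\bar\eta) + \oo(\abs{\eta_k-\bar\eta}) = \nabla^2 f(\bar\eta)(\eta_k-\bar\eta) + \oo(t_k).
\end{equation*}
For the perturbation term, write
\begin{equation*}
	\nabla h_k(\eta_k) - \nabla h(\bar\eta) = \bigl(\nabla h_k(\eta_k) - \nabla h(\eta_k)\bigr) + \bigl(\nabla h(\eta_k) - \nabla h(\bar\eta)\bigr).
\end{equation*}
The first bracket tends to zero by uniform convergence of $\nabla h_k$, and the second by continuity of $\nabla h$ together with $\eta_k \to \bar\eta$. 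Hence $t_k\nabla h_k(\eta_k) = t_k\nabla h(\bar\eta) + \oo(t_k)$. Combining the two expansions gives
\begin{equation*}
	\nabla^2 f(\bar\eta)(\eta_k-\bar\eta) = -t_k\nabla h(\bar\eta) + \oo(t_k).
\end{equation*}

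The remaining step, and the one I expect to be the main obstacle, is justifying that $\nabla^2 f(\bar\eta)$ is invertible, with $\nabla^2 f(\bar\eta) \succeq \hat\gamma I$. I would set $\eta_1 = \bar\eta + s z$ and $\eta_2 = \bar\eta$ in \eqref{eq:funiconv}, divide by $s^2$, and let $s \searrow 0$. Twice differentiability at $\bar\eta$ then gives $z^\top\nabla^2 f(\bar\eta)z \ge \hat\gamma\abs{z}^2$. The care needed is that only pointwise twice differentiability at $\bar\eta$ is assumed, but this is exactly what the limit uses. Multiplying the last display by the bounded inverse $\nabla^2 f(\bar\eta)^{-1}$ gives \eqref{eq:taylorsensitivity} with $\theta = -\nabla^2 f(\bar\eta)^{-1}\nabla h(\bar\eta)$.
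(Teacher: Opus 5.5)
Your proof is correct and follows the paper's argument. You localize the perturbed minimizers to $B_\varepsilon(\bar\eta)$ via \eqref{eq:xiglobopt}, combine the first-order condition with \eqref{eq:funiconv} for calmness, and expand $\nabla f$ at $\bar\eta$ to get \eqref{eq:taylorsensitivity}. The only differences are cosmetic: you invert $\nabla^2 f(\bar\eta)$ directly in the asymptotic identity rather than extracting a convergent subsequence of $(\eta_k-\bar\eta)/t_k$ and using a subsequence--subsequence argument, and you give the short derivation of $\nabla^2 f(\bar\eta)\succeq\hat\gamma I$ from \eqref{eq:funiconv}, which the paper only asserts.
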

\begin{proof}
    Clearly, $S(v) \neq \emptyset$ for all $v\in C^1(\omega)$ due to the compactness of $\omega$.
    To prove the calmness of $S$ at $f$,
    let $v \in C^1(\omega)$ with $\norm{v}_{C^1(\omega)} < \delta/2$ be given. 
    Then, thanks to \eqref{eq:xiglobopt}, we have
    \begin{equation*}
    \begin{aligned}
        (f + v)(\eta)
        &\geq f(\eta) - \norm{v}_{C(\omega)} 
        \geq
        f(\bar\eta) + \delta - \norm{v}_{C(\omega)} \\
        &\geq
        (f+v)(\bar\eta) + \delta - 2 \norm{v}_{C(\omega)}
        >
        (f+v)(\bar\eta)
        \quad
        \forall\, \eta \in \omega \setminus B_\varepsilon(\bar\eta).
    \end{aligned}
    \end{equation*}
    Therefore, $S(f+v) \subset B_\varepsilon(\bar\eta) \subset \interior(\omega)$.
    Consequently, an arbitrary $\eta \in S(f+v)$
    satisfies the first-order necessary optimality condition,
    i.e., $\nabla (f+v)(\eta) = 0$.
    Together with $\nabla f(\bar\eta) = 0$ and
    \eqref{eq:funiconv} we get
    \begin{equation*}
      \hat\gamma \abs{\eta - \bar\eta}^2
      \le
      \dual{\nabla f(\eta) - \nabla f(\bar\eta)}{\eta - \bar\eta}
      =
      \dual{-\nabla v(\eta)}{\eta - \bar\eta}
      \le
      \norm{v}_{C^1(\omega)} \abs{\eta - \bar\eta}.
    \end{equation*}
    This implies the calmness \eqref{eq:calm_solution_operator}.
    
    Consider now an arbitrary null sequence $\seq{t_k} \subset \R^+$ and an arbitrary sequence $\seq{h_k} \subset C^1(\omega)$ with $h_k \to h$ in $C^1(\omega)$, 
    as in the statement of the lemma. For the rest of the proof, we assume w.l.o.g.\  that $\norm{t_k h_k}_{C^1(\omega)} < \delta/2$ for all $k \in \N$.
    For each $k\in \N$, pick an arbitrary $\eta_k \in S(f + t_k h_k)$. Then the calmness implies
    \begin{equation}\label{eq:estdiffquot}
        \frac{\abs{\eta_k - \bar\eta}}{t_k}
        \leq
        \frac{1}{\hat\gamma} \norm{h_k}_{C^1(\omega)}
        \leq
        \widetilde C
        <
        \infty
        \quad
        \forall \, k\in \N
    \end{equation}
    and thus there exists a converging subsequence, denoted by the same symbol to ease notation, i.e., 
    $(\eta_k - \bar\eta)/t_k \to \theta$ for some $\theta \in \R^d$. Furthermore, since $\eta_k \to \bar\eta \in \interior(\omega)$, we have $\eta_k \in \interior(\omega)$ for 
    $k\in \N$ sufficiently large and consequently, the first-order optimality conditions for $\bar\eta$ and $\eta_k$ read $\nabla f(\bar\eta) = 0$ and 
    $\nabla (f+t_k h_k)(\eta_k) = 0$. This leads to 
    \begin{equation*}
        \frac{\nabla f(\eta_k) -  \nabla f(\bar\eta)}{t_k}
        =
        -\nabla h_k(\eta_k)
    \end{equation*}
    and the twice differentiability of $f$ at $\bar\eta$ implies
    \begin{equation}\label{eq:tayloreta}
        \nabla^2 f(\bar\eta)\,\frac{\eta_k - \bar\eta}{t_k}
        +
        \frac{\oo(\abs{\eta_k - \bar\eta})}{t_k}
        =
        -\nabla h_k(\eta_k) .
    \end{equation}
    Now, since $\abs{\eta_k - \bar\eta} \leq \tilde C t_k$ and $\nabla h_k \to \nabla h$  in $C(\omega;\R^d)$, we can pass to the limit $k \to \infty$ and obtain
    \begin{equation}\label{eq:etagleichung}
        \nabla^2 f(\bar\eta) \theta = - \nabla h(\bar\eta)
        .
    \end{equation}
    From \eqref{eq:funiconv} it follows that the Hessian $\nabla^2 f(\bar\eta)$ is positive definite.
    Thus, the limit $\theta$ is uniquely determined by \eqref{eq:etagleichung}.
    Consequently,
    a subsequence-subsequence argument shows that
    the entire sequence of difference quotients $\{(\eta_k - \bar\eta)/t_k\}$ converges to $\theta$, which finally gives the result.
\end{proof}

\begin{remark}
	\label{rem:S_Hadamard}
    Note that, if $S$ were single-valued, then the above proof just shows that $S$ is Hadamard differentiable in $\bar\eta$.
\end{remark}

In the next result, we show that \cref{assu:varphireg}
allows to apply \cref{lem:sensitivity}
pointwise.

\begin{lemma}
	\label{lem:assumptions_imply_main_lemma's_assumptions}
	Let \Cref{assu:varphireg} be satisfied.
	Then,
	there exist $\hat\gamma, \varepsilon, \delta > 0$
	with $\varepsilon \in (0,\hat\varepsilon]$
	such that
	for every $x \in \omega_0$,
	the assumptions of \Cref{lem:sensitivity}
	hold for the function
	$c(x,\cdot) - \bar \varphi$
	with these constants $\hat\gamma, \varepsilon, \delta$.
	Moreover, there exists a constant $C_H$,
	such that the norm of the Hessian
	$\nabla^2_{\eta\eta}c(x,\eta) - \nabla^2\bar\varphi(\eta)$
	is bounded by $C_H$
	for all $x \in \omega_0$ and $\eta \in B_\varepsilon(\bar T(x))$.
\end{lemma}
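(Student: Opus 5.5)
Fix $x \in \omega_0$ and set $f_x := c(x,\cdot) - \bar\varphi$ with unique minimizer $\bar\eta = \bar T(x)$. The plan is to derive all constants uniformly in $x$ from compactness and continuity, using two facts already established: $\bar T$ is continuous by \cref{lem:continuity_bar_T}, and $\bar T(\omega_0) = \supp(\bar u)$ by \eqref{eq:support}.

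First I fix $\varepsilon$. Since $\supp(\bar u)$ is compact and $\bar\varphi$ is $C^2$ on $K := \supp(\bar u) + B_{\hat\varepsilon}(0)$, I take $\varepsilon \le \hat\varepsilon$ (possibly smaller later). Then $B_\varepsilon(\bar T(x)) \subset K \subset \interior(\omega_1)$ for every $x$, and in particular $\bar\eta \in \interior(\omega_1)$. Consequently $f_x$ is $C^2$ on $B_\varepsilon(\bar\eta)$, since $c \in C^2$ and $\bar\varphi \in C^2(K)$. The Hessian bound $C_H$ is then immediate: the map $(x,\eta) \mapsto \nabla^2_{\eta\eta}c(x,\eta) - \nabla^2\bar\varphi(\eta)$ is continuous on the compact set $\omega_0 \times K$.

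The main step is strong convexity with a uniform $\hat\gamma$. From \eqref{eq:regularity}, every $\eta \in B_\varepsilon(\bar\eta)$ satisfies $f_x(\eta) \ge f_x(\bar\eta) + \frac\gamma2\abs{\eta-\bar\eta}^2$. Because $\bar\eta$ is an interior minimizer, $\nabla f_x(\bar\eta)=0$, and a second-order Taylor expansion along rays gives $\nabla^2 f_x(\bar\eta) \succeq \gamma I$ for all $x$. To propagate this to the whole ball, I use uniform continuity of $g(x,\eta) := \nabla^2_{\eta\eta}c(x,\eta) - \nabla^2\bar\varphi(\eta)$ on $\omega_0 \times K$. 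After shrinking $\varepsilon$, this yields $\lambda_{\min}(g(x,\eta)) \ge \gamma/2 =: \hat\gamma$ for all $\eta \in B_\varepsilon(\bar T(x))$. Since the ball is convex, integrating the Hessian along the segment $[\eta_2,\eta_1]$ gives \eqref{eq:funiconv}.

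Finally, \eqref{eq:xiglobopt} follows again from \eqref{eq:regularity}: for $\eta \notin B_\varepsilon(\bar\eta)$ we have $f_x(\eta) \ge f_x(\bar\eta) + \frac\gamma2\varepsilon^2$, so $\delta := \gamma\varepsilon^2/2$ works independently of $x$. Uniqueness of the minimizer also follows from \eqref{eq:regularity}, and $f_x \in C^1(\omega_1)$ holds because $c \in C^2$ and $\bar\varphi \in C^1(\omega_1)$.

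The only genuinely delicate point is the uniform strong-convexity constant. It requires using $\nabla f_x(\bar\eta)=0$, which holds since $\bar\eta$ is interior, together with uniform continuity of the Hessian over the compact set $\omega_0 \times K$, rather than an $x$-by-$x$ argument.
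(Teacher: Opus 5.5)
Your proposal is correct and follows essentially the same route as the paper. The paper also bounds the Hessian at $\bar T(x)$ below by $\gamma$ via \eqref{eq:regularity} and the interior first-order condition, propagates this to a uniform $\hat\gamma$ on $B_\varepsilon(\bar T(x))$ by uniform continuity of the Hessian on a compact set, obtains \eqref{eq:xiglobopt} from \eqref{eq:regularity} with $\delta = \frac\gamma2\varepsilon^2$, and gets $C_H$ from compactness; your choice of $\omega_0 \times (\supp(\bar u) + B_{\hat\varepsilon}(0))$ instead of $\omega_0\times\omega_1$ (where $\bar\varphi$ need not be $C^2$) and your explicit integration along segments to derive \eqref{eq:funiconv} are, if anything, slightly more careful than the paper's write-up.
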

\begin{proof}
	Let $x \in \omega_0$ be arbitrary.
	Since $\bar \eta := \bar T(x) \in \supp(\bar u)$,
	see \eqref{eq:support},
	\cref{assu:varphireg}
	implies that
	$c(x, \cdot) - \bar\varphi$
	is twice continuously differentiable in $\bar \eta$ and 
	\eqref{eq:regularity} yields
	that the smallest eigenvalue of
	its Hessian
	$\nabla^2_{\eta\eta} c(x,\bar\eta) - \nabla^2\bar\varphi(\bar\eta)$
	is at least $\gamma$, where we also used that 
	$\nabla_{\eta} c(x,\bar\eta) - \nabla\bar\varphi(\bar\eta) = 0$ due to $\supp(\bar u) \subset \interior(\omega_1)$ 
	by \cref{assu:varphireg}.
	For any $\hat\gamma \in (0,\gamma)$,
	we can choose $\varepsilon \in (0,\hat\varepsilon]$
	such that the smallest eigenvalue of
	$\nabla^2_{\eta\eta} c(x,\eta) - \nabla^2\bar\varphi(\eta)$
	is at least $\hat\gamma$
	for all $x \in \omega_0$ and $\eta \in B_\varepsilon(\bar T(x))$
	due to uniform continuity of the Hessian on the compact set $\omega_0 \times \omega_1$.
	In particular,
	\eqref{eq:funiconv} is satisfied by
	$f = c(x, \cdot) - \bar\varphi$
	for all $x \in \omega_0$.
	Moreover, \eqref{eq:regularity}
	gives that \eqref{eq:xiglobopt} is fulfilled for $\delta = \frac \gamma 2 \varepsilon^2$ and the uniqueness of the minimizer.
	The upper bound for the Hessian follows
	since
	$\omega_0 \times ( \supp(\bar u) + B_\varepsilon(0))$
	is a compact set and the Hessian is continuous on this set
	due to \cref{assu:varphireg}.
\end{proof}

Before we use \Cref{lem:sensitivity} to calculate $J''$,
we first introduce a substitution rule
that allows us to rewrite an integral w.r.t.\ $u_0$
as an integral w.r.t.\ $\bar u$.
We recall that the definition of the push-forward
implies
\begin{equation*}
	\int_{\omega_0} f(\bar T(x)) \d u_0(x)
	=
	\int_{\omega_1} f(\eta) \d \bar u(\eta)
\end{equation*}
for all $\bar u$-integrable functions $f \colon \omega_1 \to \R$,
since $\bar u = \bar T \# u_0$.
However, this does not work if the integrand in the former integral would also depend on $x$.
For this, one can employ the celebrated disintegration theorem from \cite[Thm.~5.3.1]{AmbrosioGigliSavare2005}.
This ensures the existence of a family $\set{\hat u_\eta}_{\eta \in \omega_1}$
of probability measures on $\omega_0$,
such that
$\eta \mapsto \hat u_\eta(B)$ is measurable for all Borel sets $B \subset \omega_0$,
$\hat u_\eta( \omega_0 \setminus \bar T^{-1}(\set{\eta})) = 0$ for $\bar u$-a.e.\ $\eta \in \omega_1$
and
\begin{equation*}
	\int_{\omega_0} f(x) \d u_0(x)
	=
	\int_{\omega_1} \int_{\bar T^{-1}(\set{\eta})} f(x) \d \hat u_\eta(x) \d \bar u(\eta)
\end{equation*}
for all Borel measurable $f \colon \omega_0 \to [0,\infty]$.
In particular, if $f \colon \omega_0 \times \omega_1 \to [0,\infty]$
is Borel measurable, this implies
\begin{equation}
	\label{eq:disintegration}
	\begin{aligned}
		\int_{\omega_0} f(x, \bar T(x)) \d u_0(x)
		&=
		\int_{\omega_1} \int_{\bar T^{-1}(\set{\eta})} f(x, \bar T(x)) \d \hat u_\eta(x) \d \bar u(\eta)
		\\
		&=
		\int_{\omega_1} \int_{\bar T^{-1}(\set{\eta})} f(x, \eta) \d \hat u_\eta(x) \d \bar u(\eta)
		\\
		&=
		\int_{\omega_1} F(\eta) \d \bar u(\eta)
	\end{aligned}
\end{equation}
with
$F(\eta) := \int_{\bar T^{-1}(\set{\eta})} f(x, \eta) \d \hat u_\eta(x)$.
In a certain sense,
the measure $\hat u_\eta$ indicates which points in $\omega_0$
have been transported to the point $\eta$
and
$F(\eta)$ is a weighted average over those points.

\begin{theorem}
	\label{thm:J''}
	Let \Cref{assu:varphireg} be true.
	Then,
	$J$ is twice directionally Hadamard differentiable at $\bar\varphi$ with
	\begin{equation}
		\label{eq:strong_second_subderivative_J}
		J''(\bar\varphi; \psi)
		=
		\int_{\omega_1}
		\nabla \psi(\eta)^\top
		H(\eta)^{-1}
		\nabla \psi(\eta)
		\d \bar u(\eta)
		\qquad\forall \psi \in C^1(\omega_1)
		,
	\end{equation}
	where
	$H \colon \omega_1 \to \R^{d \times d}$
	defined via
	\begin{equation}
		\label{eq:H}
		H(\eta)
		:=
		\parens*{
			\int_{\bar T^{-1}(\set{\eta})}
			\big[\nabla^2_{\eta\eta} c(y, \eta) - \nabla^2 \bar\varphi(\eta)\big]^{-1}
			\d \hat u_\eta(y)
		}^{-1}
	\end{equation}
	is a bounded Borel-measurable map
	and the family $\set{\hat u_\eta}_{\eta \in \omega_1}$ of probability measures on $\omega_0$
	is provided by the disintegration theorem (see above).
	For $\bar u$-a.a.\ $\eta \in \omega_1$, the eigenvalues of $H$ are in $[\gamma,C_H]$.
\end{theorem}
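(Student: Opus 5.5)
We reduce the second-order difference quotient of $J$ to a pointwise computation for every $x\in\omega_0$ and then integrate, using dominated convergence and the disintegration \eqref{eq:disintegration}. Let $t_k\searrow0$ and $\psi_k\to\psi$ in $C^1(\omega_1)$. Write $f_x := c(x,\cdot)-\bar\varphi$, $\bar\eta=\bar T(x)$, and let $\eta_k^x \in S(f_x - t_k\psi_k)$, i.e.\ a minimizer of $c(x,\cdot)-(\bar\varphi+t_k\psi_k)$. Since $\bar u=\bar T\#u_0$ and $\bar\varphi^{\bar c}(x)=f_x(\bar\eta)$, the quotient equals
\begin{equation*}
	\int_{\omega_0} q_k(x)\d u_0(x),
	\qquad
	q_k(x):=\frac{f_x(\bar\eta)+t_k\psi_k(\bar\eta)-f_x(\eta_k^x)+t_k\psi_k(\eta_k^x)}{t_k^2/2}.
\end{equation*}
By \cref{lem:assumptions_imply_main_lemma's_assumptions}, \cref{lem:sensitivity} applies to $f_x$ with constants uniform in $x$. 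Calmness \eqref{eq:calm_solution_operator} therefore gives $|\eta_k^x-\bar\eta|\le t_k\norm{\psi_k}_{C^1}/\hat\gamma$ uniformly in $x$ for large $k$. We also have $q_k\ge0$, because $\bar\eta$ minimizes $f_x$, and $q_k(x)\le \frac{2}{t_k}(\psi_k(\eta_k^x)-\psi_k(\bar\eta))\le 2C_L\norm{\psi_k}_{C^1}^2/\hat\gamma$ by \cref{lem:Lipschitz_constant_bounded_by_C1_norm}. This gives a uniform bound for dominated convergence.

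For the pointwise limit, let $A_x:=\nabla^2_{\eta\eta}c(x,\bar\eta)-\nabla^2\bar\varphi(\bar\eta)$. By \cref{lem:sensitivity}, $\eta_k^x=\bar\eta+t_k\theta_x+\oo(t_k)$ with $\theta_x=A_x^{-1}\nabla\psi(\bar\eta)$; the sign is flipped because the perturbation is $-t_k\psi_k$. Taylor expansion of $f_x$ at $\bar\eta$ (with $\nabla f_x(\bar\eta)=0$ since $\bar\eta$ is interior) gives $f_x(\eta_k^x)-f_x(\bar\eta)=\frac{t_k^2}{2}\theta_x^\top A_x\theta_x+\oo(t_k^2)$. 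The mean value theorem together with $\nabla\psi_k\to\nabla\psi$ uniformly, and continuity of $\nabla\psi$, gives $\psi_k(\eta_k^x)-\psi_k(\bar\eta)=t_k\nabla\psi(\bar\eta)^\top\theta_x+\oo(t_k)$. Hence
\begin{equation*}
	q_k(x)\to 2\nabla\psi(\bar\eta)^\top\theta_x-\theta_x^\top A_x\theta_x=\nabla\psi(\bar T(x))^\top A_x^{-1}\nabla\psi(\bar T(x)).
\end{equation*}
Dominated convergence then shows that the full limit exists, independently of subsequences, and equals $\int_{\omega_0}\nabla\psi(\bar T(x))^\top A_x^{-1}\nabla\psi(\bar T(x))\d u_0(x)$. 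Measurability in $x$ follows from Borel measurability of $\bar T$; it is even continuous by \cref{lem:continuity_bar_T}.

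It remains to rewrite the limit via \eqref{eq:disintegration}. Apply the disintegration to the integrand $f(x,\eta)=\nabla\psi(\eta)^\top[\nabla^2_{\eta\eta}c(x,\eta)-\nabla^2\bar\varphi(\eta)]^{-1}\nabla\psi(\eta)$, split into positive and negative parts or note that it is nonnegative. Because $\nabla\psi(\eta)$ is constant in the inner integral, it can be pulled out, and the inner integral becomes the averaged inverse Hessian $H(\eta)^{-1}$. For the spectral bounds, \eqref{eq:regularity} at interior minimizers gives $A_x\succeq\gamma I$, and \cref{lem:assumptions_imply_main_lemma's_assumptions} gives $A_x\preceq C_H I$. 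So $A_x^{-1}$ has eigenvalues in $[C_H^{-1},\gamma^{-1}]$, and this convex set of matrices is preserved by averaging against the probability measure $\hat u_\eta$ for $\bar u$-a.e.\ $\eta$. Therefore $H(\eta)$ is well defined, with eigenvalues in $[\gamma,C_H]$, and it is Borel measurable by the measurability of $\eta\mapsto\hat u_\eta$; we may set it to $\gamma I$ on the exceptional null set.

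The main obstacle is the uniformity needed for domination and the care with $\oo$-terms. The pointwise expansion from \cref{lem:sensitivity} is not uniform in $x$, which is why a uniform bound on $q_k$ is needed rather than a uniform expansion. The Lipschitz/calmness estimate above supplies that bound.
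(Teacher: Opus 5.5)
Your proof is correct and follows the paper's argument: a pointwise reduction via \cref{lem:sensitivity} with constants uniform in $x$, dominated convergence, the disintegration \eqref{eq:disintegration}, and averaging of the inverse Hessians. The only deviation is the domination step, where you use the sandwich $0 \le q_k \le \frac{2}{t_k}\bigl(\psi_k(\eta_k^x)-\psi_k(\bar\eta)\bigr)$ instead of bounding the paper's $I_k$ separately with the Hessian bound $C_H$; this is slightly more economical.

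There are two small slips. First, the term $t_k\psi_k(\bar\eta)$ in your definition of $q_k$ should carry a minus sign. Second, $q_k \ge 0$ follows from the minimality of $\eta_k^x$ for $f_x - t_k\psi_k$, not from the minimality of $\bar\eta$ for $f_x$; the latter is what gives your upper bound.
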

\begin{proof}
	Let sequences $\seq{t_k} \subset \R^+$
	and $\seq{\psi_k} \subset C^1(\omega)$
	with
	$t_k \to 0$ and
	$\psi_k \to \psi$ in $C^1(\omega_1)$ be given.
	Then,
	\eqref{eq:predualobj} implies
	\begin{equation*}
		\frac{J(\bar\varphi + t_k \psi_k) - J(\bar\varphi) - t_k \dual{\bar u}{\psi_k}}{t_k^2 / 2}
		=
		\int_{\omega_0}
		\frac{
			-( \bar\varphi + t_k \psi_k)^{\bar c}
			+\bar\varphi^{\bar c}
			-
			t_k (\psi_k \circ \bar T)
		}{t_k^2/2}
		\d u_0
		.
	\end{equation*}
	We prove the convergence of this integral via the dominated convergence theorem.
	To this end,
	let $x \in \omega_0$ be arbitrary.
	We define $\bar\eta := \bar T(x)$,
	which is the unique minimizer of $c(x,\cdot) - \bar\varphi$,
	and denote by $\eta_k$
	an arbitrary minimizer of $c(x, \cdot) - (\bar\varphi + t_k \psi_k)$,
	i.e., we have
	$\eta_k \in S(c(x,\cdot) - \bar\varphi - t_k \psi_k)$
	with $S$ from \cref{lem:sensitivity}.
	Then, the definition of the $\bar c$-conjugate, see \eqref{eq:cconjugate},
	implies that the value of the above integrand at $x$ is precisely
	\begin{equation*}
		\frac{
			- c(x, \eta_k) + (\bar\varphi + t_k \psi_k)(\eta_k)
			+ c(x, \bar\eta) - \bar\varphi(\bar\eta)
			- t_k \psi_k(\bar\eta)
		}{t_k^2/2}
		=
		I_{k}(x) + 2 J_k(x)
		,
	\end{equation*}
	where
	\begin{equation*}
		I_{k}(x) :=
		\frac{
			- c(x, \eta_k) + \bar\varphi(\eta_k)
			+ c(x, \bar\eta) - \bar\varphi(\bar\eta)
		}{t_k^2/2}
		\quad\text{and}\quad
		J_k(x) :=
		\frac{
			\psi_k(\eta_k)
			- \psi_k(\bar\eta)
		}{t_k}
		.
	\end{equation*}

	\Cref{lem:assumptions_imply_main_lemma's_assumptions} lets us apply
	\Cref{lem:sensitivity} to the function $f = c(x, \cdot) - \bar\varphi$ and the direction $h = -\psi$.
	In turn, we get
	\begin{equation}
		\label{eq:expansion_eta_k}	
		\eta_k = \bar \eta + t_k \theta(x) + \oo(t_k) 
		\quad \text{with} \quad 
		\theta(x) = [\nabla^2_{\eta\eta} c(x, \bar \eta) - \nabla^2 \bar\varphi(\bar \eta)]^{-1} \nabla \psi(\bar \eta).
	\end{equation}
	We mention that the $\oo(t_k)$ depends on $x$,
	but this will not cause any problems, since we only need pointwise convergence for
	the application of Lebesgue's dominated convergence theorem below.
	Let us denote by $C > 0$ an upper bound on $\norm{\psi_k}_{C^1(\omega_1)}$.
	Note that $C$ is independent of $x$.
	Next, we pick $k_0$ such that
	$t_k \le \min\set{\delta/(3 C), (\varepsilon \hat\gamma)/C }$
	for all $k \ge k_0$.
	In particular, $\norm{t_k \psi_k}_{C^1(\omega_1)} \le \delta/3 < \delta/2$
	for all $k \ge k_0$
	allows us to apply
	\eqref{eq:calm_solution_operator} with $v = - t_k \psi_k$
	and we obtain
	\begin{equation}
		\label{eq:etak_to_eta_uniformly}
		\abs{\eta_k - \bar \eta}
		\le
		\frac {t_k \norm{\psi_k}_{C^1(\omega_1)}}{\hat\gamma}
		\le
		\frac{C t_k}{\hat\gamma}
		\le
		\varepsilon
		\qquad\forall k \ge k_0
		.
	\end{equation}
	For $J_k$, this implies the uniform bound
	\begin{equation*}
		\abs{J_k(x)}
		\le
		\frac{ \lip( \psi_k ) \abs{\eta_k - \bar\eta} }{t_k}
		\le
		\frac{C_L \norm{\psi_k}_{C^1(\omega_1)} C}{\hat\gamma}
		\le
		\frac{C_L C^2}{\hat\gamma}
		\qquad\forall k \ge k_0
		,
	\end{equation*}
	where we used \cref{lem:Lipschitz_constant_bounded_by_C1_norm}.
	Moreover, we have the pointwise convergence
	\begin{equation*}
		J_k(x)
		=
		\frac{
			\psi(\eta_k)
			- \psi(\bar\eta)
		}{t_k}
		+
		\frac{
			(\psi_k - \psi)(\eta_k)
				- (\psi_k - \psi)(\bar\eta)
		}{t_k}
		\to
		\nabla \psi(\bar\eta)^\top \theta(x),
	\end{equation*}
	due to \eqref{eq:expansion_eta_k},
	since the second addend can be bounded
	by $\lip(\psi_k - \psi) \abs{\eta_k - \eta}/t_k \to 0$.
	To treat $I_{k}$, we note that the optimality condition for $\bar\eta \in \interior(\omega_1)$
	reads
	$\nabla \bar\varphi(\bar \eta) - \nabla_\eta c(x, \bar \eta) = 0$.
	Consequently,
	a second-order Taylor expansion
	and \eqref{eq:expansion_eta_k} yield
	\begin{align*}
		I_{k}(x)
		&
		=
		\frac{
			\bar\varphi(\eta_k)
			- \bar\varphi(\bar\eta)
			- \nabla\bar\varphi(\bar\eta)^\top (\eta_k - \bar\eta)
			- c(x, \eta_k)
			+ c(x, \bar\eta)
			+ \nabla_\eta c(x,\bar\eta)^\top (\eta_k - \bar\eta)
		}{t_k^2/2}
		\\&
		\to
		\theta(x)^\top
		\parens*{ \nabla^2\bar\varphi(\bar\eta) - \nabla^2_{\eta\eta}c(x,\bar\eta) }
		\theta(x)
		=
		-\theta(x)^\top
		\parens*{ \nabla^2_{\eta\eta}c(x,\bar\eta) - \nabla^2\bar\varphi(\bar\eta) }
		\theta(x)
	\end{align*}
	as $k \to \infty$.
	It remains to check the boundedness of $I_k(x)$.
	By \cref{assu:varphireg} and \cref{lem:assumptions_imply_main_lemma's_assumptions},
	we have $B_\varepsilon(\bar\eta) \subset B_{\hat\varepsilon}(\bar\eta) \subset \interior(\omega_1)$
	and \eqref{eq:etak_to_eta_uniformly} implies
	$\eta_k \in B_\varepsilon(\bar\eta)$ for all $k \ge k_0$.
	By compactness and continuity of $\nabla^2 \bar\varphi - \nabla^2_{\eta\eta}c$,
	we find a constant $C_H \ge 0$ (independent of $x$)
	such that the matrix norm of
	$\nabla^2 \bar\varphi(\eta) - \nabla^2_{\eta\eta}c(x,\eta)$
	is bounded by $C_H$
	for all $x \in \omega_0$ and all $\eta \in B_\varepsilon(\bar T(x))$.
	Consequently, a second-order Taylor estimate
	and \eqref{eq:etak_to_eta_uniformly} imply
	\begin{equation*}
		\abs{ I_k(x) }
		\le
		\frac{ C_H \abs{\eta_k - \eta}^2 }{t_k^2}
		\le
		C_H \parens*{ \frac{C}{\hat\gamma} }^2
	\end{equation*}
	for all $k \ge k_0$.
	Since $u_0$ is a finite measure,
	the uniform bound for $I_k + 2 J_k$ is integrable.
	Consequently, the dominated convergence theorem applies.
	By inserting the equation for $\theta(x)$ in the limiting expression,
	we obtain
	\begin{align*}
		\MoveEqLeft
		\lim_{k \to \infty}
		\frac{J(\bar\varphi + t_k \psi_k) - J(\bar\varphi) - t_k \dual{\bar u}{\psi_k}}{t_k^2 / 2}
		\\
		&=
		\int_{\omega_0}
		-\theta(x)^\top
		\parens*{ \nabla^2_{\eta\eta}c(x,\bar T(x)) - \nabla^2\bar\varphi(\bar T(x)) }
		\theta(x)
		+ 2
		\nabla \psi(\bar T(x))^\top \theta(x)
		\d u_0(x)
		\\
		&=
		\int_{\omega_0}
		\nabla\psi(\bar T(x))^\top
		\parens*{ \nabla^2_{\eta\eta}c(x,\bar T(x)) - \nabla^2\bar\varphi(\bar T(x)) }^{-1}
		\nabla\psi(\bar T(x))
		\d u_0(x)
		.
	\end{align*}

	Finally, we want to rewrite this integral as an integral w.r.t.\ $\bar u$.
	As explained above, this can be achieved by the disintegration theorem
	which provides the family of probability measures $\set{\hat u_\eta}_{\eta \in \omega_1}$.
	Since the above integrand is nonnegative,
	we can argue as in \eqref{eq:disintegration}.
	This yields
	\begin{align*}
		\MoveEqLeft
		\int_{\omega_0}
		\nabla \psi(\bar T(x))^\top
		\big[\nabla^2_{\eta\eta} c(x, \bar T(x)) - \nabla^2 \bar\varphi(\bar T(x))\big]^{-1}
		\nabla \psi(\bar T(x))
		\d u_0(x)
		\\&
		=
		\int_{\omega_1}
		\nabla \psi(\eta)^\top
		H(\eta)^{-1}
		\nabla \psi(\eta)
		\d \bar u(\eta)
		,
	\end{align*}
	with $H$ as in \eqref{eq:H}.
	In \cref{lem:assumptions_imply_main_lemma's_assumptions}
	we have seen
	that the eigenvalues of
	$\nabla^2_{\eta\eta} c(y, \eta) - \nabla^2\varphi(\eta)$
	are in $[\gamma, C_H]$
	for all
	$\eta \in \supp(\bar u)$
	and
	$y \in \bar T^{-1}(\set{\eta})$.
	Consequently, the integrand in the definition of $H$
	is bounded and
	the measurability of $H$ follows from \cite[(5.3.1)]{AmbrosioGigliSavare2005}.
	Since $\hat u_\eta$ is a probability measure,
	we infer that the eigenvalues of $H$
	also belong to $[\gamma, C_H]$
	for $\bar u$-a.a.\ $\eta \in \omega_1$.
	This shows the claim.
\end{proof}

\begin{remark}\label{rem:J''}
    We were not able to prove this theorem under the weaker assumptions in
    \Cref{assu:weaker_regularity_assumptions_on_T}. The current proof depends
    on the calmness \eqref{eq:calm_solution_operator}
    of the solution mapping from \Cref{lem:sensitivity} which
    (under the weaker assumption)
    only works if $\norm{v}_{C^1(\omega_1)} < \delta(x)/2$ which depends on $x$.
    In the setting of the proof of \cref{thm:J''},
    the number $k_0$ then also depends on $x$
    and this invalidates the uniform upper bound on the integrand.
    Some preliminary computations show
    that the structure of $J''(\bar\varphi; \cdot)$
    (if it exists)
    is much more complicated under
    the weaker assumption
    and this is subject to future research.
\end{remark}

In the important case of the quadratic cost, i.e., $c(x,\eta) = \abs{x - \eta}^2$,
we have $\nabla^2_{\eta\eta} c(y, \eta) = 2 \Id$,
which is independent of $y$.
In this case, formula \eqref{eq:H} simplifies to
\begin{equation}
	\label{eq:H_simple}
	H(\eta)
	=
	2 \Id - \nabla^2 \bar\varphi(\eta)
	.
\end{equation}
Consequently, we have the simple expression
\begin{equation*}
	J''(\bar\varphi; \psi)
	=
	\int_{\omega_1}
	\nabla \psi(\eta)^\top
	\parens*{
		2 \Id - \nabla^2 \bar\varphi(\eta)
	}^{-1}
	\nabla \psi(\eta)
	\d \bar u(\eta)
\end{equation*}
for the second directional derivative.
A similar simplification is valid whenever the partial Hessian
$\nabla^2_{\eta\eta} c(y, \eta)$ is independent of $y$.

In the next section,
we will see that the weak-$\star$ second subderivative
$\frac 12 \wdist''(\bar u, \bar\varphi; \cdot)$
is the convex conjugate of
$\frac 12 J''(\bar\varphi; \cdot)$.
Since these are quadratic functionals,
we get
\begin{equation*}
	\frac 12 J''(\bar\varphi; \psi)
	=
	\frac 12 \dual{\psi\dualspace}{\psi}
	=
	\frac 12 \wdist(\bar u, \bar\varphi; \psi\dualspace)
	,
\end{equation*}
whenever $\psi\dualspace \in \frac12 \partial J''(\bar\varphi; \cdot)(\psi)$.
Thus, we characterize this subdifferential.
\begin{corollary}
	\label{lem:Subdiff_Jpp_third_component}
	The functional $\frac 1 2 J''(\bar\varphi; \cdot)$ is convex
	and
	its subdifferential is
	\begin{equation}
		\label{eq:Hinvgradpsidu}
		\frac 1 2 \partial J''(\bar\varphi; \cdot)(\psi)
		=
		\set{\psi\dualspace},
		\quad\text{where}\quad
		\psi\dualspace := \eqclass{0}{(H^{-1} \nabla \psi) \bar u}
	\end{equation}
	for all $\psi \in C^1(\omega_1)$,
	cf. \eqref{eq:short_notation_element_C1*}.
\end{corollary}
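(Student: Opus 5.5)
The functional $q(\psi) := \frac12 J''(\bar\varphi;\psi) = \frac12\int_{\omega_1} \nabla\psi^\top H^{-1}\nabla\psi \d\bar u$ is a quadratic form. It is generated by the symmetric bilinear form
\[
	a(\psi,\chi) := \int_{\omega_1} \nabla\psi(\eta)^\top H(\eta)^{-1}\nabla\chi(\eta)\d\bar u(\eta),
	\qquad
	q(\psi) = \tfrac12 a(\psi,\psi).
\]
This form is well defined and bounded on $C^1(\omega_1)$ by \cref{thm:J''}. There, $H$ is bounded and Borel measurable and its eigenvalues lie in $[\gamma,C_H]$ $\bar u$-a.e. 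Hence $H^{-1}$ is measurable and symmetric with eigenvalues in $[C_H^{-1},\gamma^{-1}]$, and
\[
	\abs{a(\psi,\chi)} \le \gamma^{-1}\bar u(\omega_1)\norm{\psi}_{C^1(\omega_1)}\norm{\chi}_{C^1(\omega_1)}.
\]
Positive semidefiniteness of $H^{-1}$ gives $a(\psi,\psi)\ge0$. Therefore $q$ is a continuous, nonnegative quadratic form, and convexity follows from the standard identity
\[
	\lambda q(\psi)+(1-\lambda)q(\chi)-q(\lambda\psi+(1-\lambda)\chi) = \tfrac12\lambda(1-\lambda)\,a(\psi-\chi,\psi-\chi)\ge 0.
\]

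For the subdifferential, I would first show that $q$ is Fréchet (or at least Gâteaux) differentiable. Expanding gives
\[
	q(\psi+t\chi) = q(\psi) + t\,a(\psi,\chi) + t^2 q(\chi).
\]
So the derivative at $\psi$ is the linear functional $\chi \mapsto a(\psi,\chi)$. It is bounded by the estimate above, so it defines an element of $C^1(\omega_1)\dualspace$. For a convex function that is Gâteaux differentiable at $\psi$, the subdifferential is the singleton consisting of the derivative. Concretely, the inclusion ``$\supseteq$'' follows from convexity via $q(\chi)\ge q(\psi)+a(\psi,\chi-\psi)$, which is just the nonnegativity of $q(\chi-\psi)$. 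The inclusion ``$\subseteq$'' follows because any subgradient $\ell$ satisfies $\dual{\ell}{\chi}\le (q(\psi+t\chi)-q(\psi))/t \to a(\psi,\chi)$ for all $\chi$. Applying this to both $\chi$ and $-\chi$ forces $\ell = a(\psi,\cdot)$.

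The last step is to identify $a(\psi,\cdot)$ with $\psi\dualspace$. Set $\rho := H^{-1}\nabla\psi$. This function is Borel measurable and bounded $\bar u$-a.e., since $\nabla\psi$ is continuous and $H^{-1}$ is bounded, so $\rho\in L^2(\bar u;\R^d)$. By \eqref{eq:short_notation_element_C1*} and the symmetry of $H^{-1}$,
\[
	\dual{\eqclass{0}{\rho\bar u}}{\chi} = \int_{\omega_1}\rho^\top\nabla\chi\d\bar u = a(\psi,\chi).
\]
This is exactly the claim. The only mildly delicate point is checking that $H^{-1}$ is symmetric and measurable. Symmetry holds because $H$ is the inverse of an average of inverses of symmetric matrices, namely the Hessians from \eqref{eq:H}. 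Measurability holds because matrix inversion is continuous on the set of matrices with eigenvalues in $[\gamma,C_H]$. Everything else is routine.
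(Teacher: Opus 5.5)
Your proposal is correct and follows the paper's route; the paper's proof only states that the claim follows from the quadratic representation \eqref{eq:strong_second_subderivative_J} and the properties of $H$. You supply the omitted details: boundedness and nonnegativity of the bilinear form, convexity, G\^ateaux differentiability giving a singleton subdifferential, and the identification of the derivative with $\psi\dualspace$ via the symmetry of $H^{-1}$.
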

\begin{proof}
	This follows directly from \eqref{eq:strong_second_subderivative_J}
	and the properties of $H$.
\end{proof}

\subsection{Computing \texorpdfstring{$\wdist''$}{D''}}
\label{subsec:W''}

Having calculated $J''(\bar\varphi; \cdot)$, we can employ
\cite[Lemma~4.4]{WachsmuthWachsmuth2022}
in order to get a lower bound for the weak-$\star$ second subderivative of $\wdist$
at $\bar u = \eqclass{\bar u}{0}$
for $\bar\varphi$.
This lower bound corresponds to \itemref{lem:density_lemma:1}.

\begin{lemma}
	\label{lem:lower_bound_subderivative}
	Let \Cref{assu:varphireg} be satisfied.
	Then,
	$\frac12 \wdist''(\bar u, \bar\varphi; \cdot)
	\ge \parens*{ \frac12 J''(\bar\varphi;\cdot)}\dualspace$
	and
	\begin{equation*}
		\parens*{ \frac12 J''(\bar\varphi;\cdot)}\dualspace(\dualelement)
		=
		\inf\set*{
			\frac 1 2 \int_{\omega_1} \rho^\top H \rho \d \bar u
			\given
			\rho \in L^2(\bar u; \R^d),
			\dualelement = \eqclass{0}{\rho \bar u}
		}
	\end{equation*}
	for all $\dualelement \in C^1(\omega_1)\dualspace$.
	Further, the infimum is attained whenever the set on the right-hand side
	is not empty.
\end{lemma}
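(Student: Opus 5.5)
The proof has two parts: the inequality, and the explicit formula for the conjugate. For the inequality I would invoke \cite[Lemma~4.4]{WachsmuthWachsmuth2022} directly. Its hypotheses are the following.
\begin{enumerate}
	\item $\wdist = J\conjugate$, which is \cref{thm:J*_is_distance}.
	\item $\bar u \in \partial J(\bar\varphi)$, equivalently $\bar\varphi \in \partial\wdist(\bar u)$, which holds since $\bar\varphi$ is a Kantorovich potential; see \eqref{eq:Kantorovich_potential} and \cref{lem:subdiff_J}.
	\item $J$ is twice directionally Hadamard differentiable at $\bar\varphi$ with derivative $J''(\bar\varphi;\cdot)$, which is \cref{thm:J''}.
\end{enumerate}
That lemma then gives $\frac12\wdist''(\bar u,\bar\varphi;\cdot)\ge(\frac12 J''(\bar\varphi;\cdot))\conjugate$.

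If I had to reprove it, I would take sequences $t_k\searrow0$ and $h_k\weaklystar h$, and apply Fenchel--Young with the test function $\bar\varphi+t_k\psi$ for fixed $\psi\in C^1(\omega_1)$:
\[
	\wdist(\bar u+t_kh_k)\ge \dual{\bar u+t_kh_k}{\bar\varphi+t_k\psi}-J(\bar\varphi+t_k\psi).
\]
Subtracting $\wdist(\bar u)=\dual{\bar u}{\bar\varphi}-J(\bar\varphi)$ and $t_k\dual{h_k}{\bar\varphi}$, dividing by $t_k^2/2$, and passing to the limit with the expansion of $J$ yields a lower bound of $2\dual{h}{\psi}-J''(\bar\varphi;\psi)$, where weak-$\star$ convergence is used in $\dual{h_k}{\psi}\to\dual{h}{\psi}$. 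Taking the supremum over $\psi$ gives the conjugate.

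For the formula I would compute $(\frac12 J''(\bar\varphi;\cdot))\conjugate$ using \eqref{eq:strong_second_subderivative_J}. Write $q(\psi):=\frac12\int\nabla\psi^\top H^{-1}\nabla\psi\d\bar u$, which equals $g(\nabla\psi)$ for the quadratic form $g(\sigma):=\frac12\int\sigma^\top H^{-1}\sigma\d\bar u$ on $L^2(\bar u;\R^d)$. Then $q=g\circ A$ with $A\colon C^1(\omega_1)\to L^2(\bar u;\R^d)$, $A\psi=\nabla\psi$, a bounded linear operator. The conjugate is $g\conjugate(\rho)=\frac12\int\rho^\top H\rho\d\bar u$, since $H$ is symmetric with eigenvalues in $[\gamma,C_H]$ $\bar u$-a.e. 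I would then apply the standard composition rule $(g\circ A)\conjugate(\dualelement)=\min\{g\conjugate(\rho)\given A\conjugate\rho=\dualelement\}$, which holds with attainment because $g$ is finite and continuous on all of $L^2(\bar u;\R^d)$. Finally, $A\conjugate\rho=\eqclass{0}{\rho\bar u}$ by \eqref{eq:short_notation_element_C1*}, which gives exactly the stated infimum, attained whenever the feasible set is nonempty, and equal to $+\infty$ otherwise.

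The main point to check is this composition/duality step. Its qualification condition is continuity of $g$ at a point of $\ran A$, which is immediate. Attainment follows from weak compactness in $L^2(\bar u;\R^d)$: the constraint set $\{\rho\given A\conjugate\rho=\dualelement\}$ is affine and weakly closed, and $g\conjugate$ is coercive and weakly lower semicontinuous. If one prefers to avoid the abstract theorem, the two inequalities can be checked by hand. The bound ``$\le$'' follows from Cauchy--Schwarz in the $H$-weighted inner product:
\[
	\dual{\dualelement}{\psi}-q(\psi)=\int\rho^\top\nabla\psi\d\bar u-q(\psi)\le g\conjugate(\rho).
\]
The bound ``$\ge$'' would follow from the Hahn--Banach/closure argument standard in this kind of duality.
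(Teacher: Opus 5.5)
Your proposal is correct. For the inequality you follow the paper: it also cites \cite[Lemma~4.4]{WachsmuthWachsmuth2022}, using $\wdist = J\dualspace$ and \cref{thm:J''}. Your Fenchel--Young computation is an accurate unpacking of that lemma, and it only needs the expansion of $J$ along fixed directions $\psi$.

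For the conjugate formula, you and the paper use the same tool. Both apply the composition rule $(g\circ A)\dualspace(\dualelement)=\min\{g\dualspace(\rho) : A\dualspace\rho=\dualelement\}$ (the paper cites \cite[Theorem~3]{Rockafellar1971}), with continuity of $g$ on the whole space as the qualification condition. The difference is the intermediate space.

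\begin{itemize}
\item The paper factors $\frac12 J''(\bar\varphi;\cdot)$ through $C(\omega_1;\R^d)$, so its dual variables are vector measures $\nu\in\MM(\omega_1;\R^d)$. It then needs \cite[Corollary~4A]{Rockafellar1971} to compute the conjugate of the integral functional, in particular that $g\dualspace(\nu)=\infty$ unless $\nu\ll\bar u$. A final step then passes from $L^1(\bar u;\R^d)$ to $L^2(\bar u;\R^d)$ using the lower eigenvalue bound $\gamma$ on $H$.
\item You factor through the Hilbert space $L^2(\bar u;\R^d)$. There $g\dualspace$ is an elementary pointwise computation, the feasible set lands directly in $L^2$, and attainment can also be seen from weak compactness.
\end{itemize}

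Your route is more economical for this lemma. Your ``by hand'' remark for the nontrivial inequality is only sketched, but it is not needed, since the composition theorem supplies exactly that direction. The paper's route has a side benefit: it produces the measure-level formula \eqref{eq:g*}, which is reused in \cref{lem:equivalence_of_equivalence_classes_second_order} to state the second-order condition over $\MM(\omega_1;\R^d)$.
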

\begin{proof}
	Due to \Cref{thm:J*_is_distance},
	we have $\wdist = J\conjugate$.
	By utilizing
	that $J$ is twice directionally Hadamard differentiable at $\bar\varphi$, see \cref{thm:J''},
	we can argue as in \cite[Lemma~4.4]{WachsmuthWachsmuth2022}
	to obtain
	$\frac 1 2 \wdist''(\bar u,\bar\varphi;\cdot) \ge (\frac 1 2 J''(\bar\varphi; \cdot))\dualspace$.
	In order to evaluate this convex conjugate,
	we define the following maps
	\begin{align*}
		g \colon C(\omega_1; \R^d) \to \R
		,
		\qquad
		g(w)
		&:=
		\frac 1 2 \int_{\omega_1} w(\eta)^\top H(\eta)^{-1} w(\eta) \d \bar u(\eta)
		,
		\\
		L \colon C^1(\omega_1) \to C(\omega_1; \R^d)
		,
		\qquad
		L\phi
		&:=
		\nabla \phi
		.
	\end{align*}
	By \Cref{thm:J''}, $\frac12 J''(\bar\varphi; \cdot) = g \circ L$ holds.
	Since $\dom g = C(\omega_1; \R^d)$, we can apply \cite[Theorem~3]{Rockafellar1971}.
	This yields
	\begin{equation*}
		(g \circ L)\dualspace(\dualelement)
		=
		\inf\set*{g\dualspace(\nu) \given \nu \in \MM(\omega_1;\R^d), L\dualspace \nu = \dualelement}
	\end{equation*}
	and the infimum is attained
	whenever $\dualelement$ belongs to the range of the adjoint $L\dualspace$.

	It remains to compute the adjoint $L\dualspace$ and the convex conjugate $g\dualspace$.
	The definition of the adjoint $L\dualspace \colon \MM(\omega_1; \R^d) \to C^1(\omega_1)\dualspace$
	gives
	\begin{align*}
		\dual{L\dualspace\nu}{\varphi}_{C^1(\omega_1)}
		=
		\dual{\nu}{L \varphi}_{C(\omega_1;\R^d)}
		=
		\dual{\nu}{\nabla \varphi}_{C(\omega_1; \R^d)}
		=
		\dual{\eqclass{0}{\nu}}{\varphi}_{C^1(\omega_1)}
	\end{align*}
	for all $\nu \in \MM(\omega_1; \R^d)$ and $\varphi \in C^1(\omega_1)$.
	For the convex conjugate of $g$,
	we use
	\cite[Corollary~4A]{Rockafellar1971}
	and get
	\begin{equation}
		\label{eq:g*}
		g\dualspace(\nu)
		=
		\begin{cases}
			\frac 1 2 \int_{\omega_1} \rho_{\nu}(\eta)^\top H(\eta) \rho_{\nu}(\eta) \d \bar u
			& \nu \ll \bar u, \frac{\d\nu}{\d\bar u} = \rho_{\nu},
			\\
			\infty
			& \text{otherwise}
		\end{cases}
	\end{equation}
	for all $\nu \in \MM(\omega_1; \R^d)$.
	Here, $\nu \ll \bar u$ means that $\nu$ is absolutely continuous w.r.t.\ $\bar u$
	and $\frac{\d\nu}{\d\bar u}$ denotes the Radon--Nikodým derivative.
	Putting everything together, we arrive at
	\begin{align*}
		\frac 1 2 \wdist''(\bar u,\bar\varphi;\dualelement)
		&\ge
		\parens*{ \frac 1 2 J''(\bar\varphi; \cdot) }\dualspace(\dualelement)
		=
		(g \circ L)\dualspace(\dualelement)
		\\&
		=
		\inf\set*{g\dualspace(\nu) \given \nu \in \MM(\omega_1;\R^d), \eqclass{0}{\nu} = \dualelement}
		\\&
		=
		\inf\set*{
			\frac 1 2 \int_{\omega_1} \rho^\top H \rho \d \bar u
			\given
			\rho \in L^1(\bar u; \R^d),
			\dualelement = \eqclass{0}{\rho \bar u}
		}
		.
	\end{align*}
	Finally, we utilize \cref{thm:J''} which implies that
	$H$ is uniformly positive definite
	$\bar u$-a.e.\ and this implies
	that we can use $\rho \in L^2(\bar u; \R^d)$
	inside the infimum.
\end{proof}

In the next lemma, we show that a difference quotient of subgradients
(recall \cref{lem:subdiff_J})
converges weak-$\star$
and this will be used to show the property similar to \itemref{lem:density_lemma:2}.
\begin{lemma}
	\label{lem:recovery}
	Suppose that \cref{assu:varphireg} holds.
	Let $\seq{t_k}$ with $t_k \searrow 0$ be arbitrary. Let $\psi, \psi_k \in C^1(\omega_1)$ be given with $\psi_k \to \psi$.
	We define the sequence
	$\seq{\psi_k\dualspace} \subset C^1(\omega_1)\dualspace$ via
	\begin{equation}
		\label{eq:psi_k^*}
		\psi_k\dualspace
		:=
		\eqclass*{\frac{T(\bar\varphi+t_k \psi_k) \# u_0 - T(\bar\varphi) \# u_0}{t_k}}{0}
	\end{equation}
	Then,
	$\psi_k\dualspace \weaklystar \psi\dualspace := \eqclass{0}{(H^{-1} \nabla \psi) \bar u}$,
	cf. \eqref{eq:Hinvgradpsidu}.
	Further,
	\begin{align}
		\label{eq:replacement_for_norm_convergence}
		\limsup_{k\to\infty} \norm{\psi_k\dualspace}_{C^1(\omega_1)\dualspace}
		&\le
		C_L
		\sqrt{\frac{u_0(\omega_0)}{\gamma}}
		\sqrt{
			\int_{\omega_1} \nabla \psi^\top H^{-1} \nabla \psi \d \bar u(\eta)
		}
		.
	\end{align}
\end{lemma}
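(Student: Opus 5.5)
Test $\psi_k\dualspace$ against an arbitrary $\varphi \in C^1(\omega_1)$ and write everything as an integral over $\omega_0$. Since $\bar u = \bar T \# u_0$ and $\bar T = T(\bar\varphi)$, the definition of the pushforward gives
\begin{equation*}
	\dual{\psi_k\dualspace}{\varphi}
	=
	\int_{\omega_0} \frac{\varphi(\eta_k(x)) - \varphi(\bar T(x))}{t_k} \d u_0(x),
	\qquad \eta_k(x) := T(\bar\varphi + t_k\psi_k)(x).
\end{equation*}
Here $\eta_k(x)$ is a minimizer of $c(x,\cdot) - \bar\varphi - t_k\psi_k$. The plan is to pass to the limit pointwise and then use dominated convergence, reusing the estimates from the proof of \cref{thm:J''}.

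\textbf{Pointwise limit.} \Cref{lem:assumptions_imply_main_lemma's_assumptions} allows us to apply \cref{lem:sensitivity} with $f = c(x,\cdot)-\bar\varphi$ and $h_k = -\psi_k$. This gives $\eta_k(x) = \bar T(x) + t_k\theta(x) + \oo(t_k)$, where
\[
	\theta(x) = [\nabla^2_{\eta\eta}c(x,\bar T(x)) - \nabla^2\bar\varphi(\bar T(x))]^{-1}\nabla\psi(\bar T(x)).
\]
Since $\varphi \in C^1$ and $\bar T(x)\in\interior(\omega_1)$, the integrand converges to $\nabla\varphi(\bar T(x))^\top\theta(x)$.

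\textbf{Domination.} The calmness estimate \eqref{eq:etak_to_eta_uniformly} gives $\abs{\eta_k(x)-\bar T(x)}\le Ct_k/\hat\gamma$ for all $k\ge k_0$, uniformly in $x$. Together with \cref{lem:Lipschitz_constant_bounded_by_C1_norm}, this bounds the integrand by $C_L\norm{\varphi}_{C^1}C/\hat\gamma$, so dominated convergence applies. The disintegration identity \eqref{eq:disintegration} then rewrites the limit as
\[
	\int_{\omega_1}\nabla\varphi(\eta)^\top\Big(\int [\nabla^2_{\eta\eta}c(y,\eta)-\nabla^2\bar\varphi(\eta)]^{-1}\d\hat u_\eta(y)\Big)\nabla\psi(\eta)\d\bar u(\eta).
\]
By \eqref{eq:H} the inner integral is exactly $H(\eta)^{-1}$, so the limit equals $\dual{\eqclass{0}{(H^{-1}\nabla\psi)\bar u}}{\varphi}$. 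One technical point: \eqref{eq:disintegration} is stated for nonnegative integrands. I will handle this by splitting into positive and negative parts, which is legitimate because the integrand is bounded and $u_0$ is finite.

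\textbf{Weak-$\star$ convergence.} The bound above shows $\norm{\psi_k\dualspace}_{C^1(\omega_1)\dualspace}\le C_LC/\hat\gamma$, so the sequence is bounded. Pointwise convergence on all of $C^1(\omega_1)$ together with boundedness gives $\psi_k\dualspace\weaklystar\psi\dualspace$.

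\textbf{Norm estimate \eqref{eq:replacement_for_norm_convergence}.} This is the delicate part, because the constant must involve the sharper quantity $\gamma$ rather than the crude $\hat\gamma$. First apply Cauchy--Schwarz:
\[
	\abs{\dual{\psi_k\dualspace}{\varphi}}\le C_L\norm{\varphi}_{C^1}\sqrt{u_0(\omega_0)}\,\Big(\int_{\omega_0}\abs{\eta_k-\bar T}^2/t_k^2\d u_0\Big)^{1/2}.
\]
It then remains to show
\[
	\limsup_k\int_{\omega_0}\abs{\eta_k-\bar T}^2/t_k^2\d u_0\le\gamma^{-1}\int_{\omega_1}\nabla\psi^\top H^{-1}\nabla\psi\d\bar u.
\]
The first attempt is dominated convergence again. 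The integrand converges pointwise to $\abs{\theta(x)}^2$, and the eigenvalues of $A(x) := \nabla^2_{\eta\eta}c(x,\bar T(x)) - \nabla^2\bar\varphi(\bar T(x))$ are at least $\gamma$, so $\abs{\theta}^2 = \nabla\psi^\top A^{-2}\nabla\psi \le \gamma^{-1}\nabla\psi^\top A^{-1}\nabla\psi$. Disintegrating as in \cref{thm:J''} turns the integral of the right-hand side into the $H^{-1}$ expression. The bound is therefore obtained as a genuine limit, which is stronger than a limsup, and the supremum over $\norm{\varphi}_{C^1}\le1$ finishes the proof.

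A fallback, in case the pointwise route runs into trouble, is to use the quadratic growth \eqref{eq:regularity} directly. It gives
\[
	\tfrac\gamma2\abs{\eta_k-\bar T}^2\le t_k\big(\psi_k(\eta_k)-\psi_k(\bar T)\big)
	\quad\text{and}\quad
	\tfrac\gamma2\int\abs{\eta_k-\bar T}^2\d u_0\le J(\bar\varphi+t_k\psi_k)-J(\bar\varphi)-t_k\dual{\bar u}{\psi_k},
\]
and \cref{thm:J''} controls the right-hand side of the second estimate. This route only yields a constant off by a factor of $2$, however, so the pointwise argument is the preferred one.
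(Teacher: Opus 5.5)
Your main argument is correct and follows the paper's proof essentially step by step: the pushforward representation, the pointwise limit via \cref{lem:sensitivity}, domination by the uniform calmness bound, disintegration, and the estimate $\abs{Av}^2 \le \norm{A}\, v^\top A v$ with $\norm{A} \le 1/\gamma$; the only difference is that you apply Cauchy--Schwarz before passing to the limit rather than after, which is immaterial. You do not need the fallback, and you should drop it, because its second displayed inequality does not follow from \eqref{eq:regularity} for fixed $k$: it would require quadratic growth of $c(x,\cdot)-\bar\varphi-t_k\psi_k$ around $\eta_k$, and it fails, for instance, for quadratic cost whenever $\psi$ has positive curvature, holding only asymptotically as $k\to\infty$.
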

\begin{proof}
	Let $\phi \in C^1(\omega_1)$ be arbitrary.
	Using $\bar T = T(\bar\varphi)$
	and using the substitution rule for pushforward measures, we find
	\begin{equation*}
		\dual{\psi_k\dualspace}{\phi}
		=
		\int_{\omega_0}
		\frac{\phi(T(\bar\varphi + t_k \psi_k)(x)) - \phi(\bar T(x))}{t_k}
		\d u_0(x)
		.
	\end{equation*}
	Again, we use the dominated convergence theorem to pass to the limit.
	For each $x \in \omega_0$,
	we apply \cref{lem:sensitivity}
	to $f = c(x, \cdot) - \bar\varphi$ and $h_k = -\psi_k$.
	Then, \eqref{eq:taylorsensitivity}
	together with the differentiability of $\phi$ shows that
	the pointwise limit of the integrand is
	\begin{equation*}
		\nabla \phi(\bar T(x))^\top
		(\nabla^2_{\eta\eta} c(x, \bar T(x)) - \nabla^2 \bar\varphi(\bar T(x)))^{-1}
		\nabla \psi(\bar T(x))
	\end{equation*}
	and \eqref{eq:calm_solution_operator}
	yields the uniform bound
	\begin{equation*}
		\abs*{
			\frac{\phi(T(\bar\varphi + t_k \psi_k)(x)) - \phi(\bar T(x))}{t_k}
		}
		\le
		\frac{
			\lip(\phi) \norm{\psi_k}_{C^1(\omega_1)}
		}{\hat\gamma}
	\end{equation*}
	for all $k$ large enough.
	Thus, we can pass to the limit and obtain
	\begin{align*}
		\dual{\psi_k\dualspace}{\phi}
		&\to
		\int_{\omega_0}
		\nabla \phi(\bar T(x))^\top
		(\nabla^2_{\eta\eta} c(x, \bar T(x)) - \nabla^2 \bar\varphi(\bar T(x)))^{-1}
		\nabla \psi(\bar T(x))
		\d u_0(x)
		\\
		&=
		\int_{\omega_1}
			\int_{\bar T^{-1}(\set{\eta})}
			\nabla \phi(\eta)^\top
			(\nabla_{\eta\eta} c(x, \eta) - \nabla^2 \bar\varphi(\eta))^{-1}
			\nabla \psi(\eta)
			\d \hat u_\eta(x)
		\d\bar u(\eta)
		\\
		&=
		\int_{\omega_1}
		\nabla \phi(\eta)^\top
		H(\eta)^{-1}
		\nabla \psi(\eta)
		\d\bar u(\eta)
		=
		\dual{\psi\dualspace}{\phi}
		,
	\end{align*}
	where we used \eqref{eq:disintegration} and the definition of $H$, see \eqref{eq:H}.
	This proves $\psi_k\dualspace \weaklystar \psi\dualspace$.

	To verify \eqref{eq:replacement_for_norm_convergence},
	we start with
	\begin{align*}
		\norm{\psi_k\dualspace}_{C^1(\omega_1)\dualspace}
		&=
		\sup_{\norm{\phi}_{C^1(\omega_1)} \le 1}
		\dual{\psi_k\dualspace}{\phi}
		\\&
		=
		\sup_{\norm{\phi}_{C^1(\omega_1)} \le 1}
		\int_{\omega_0}  \frac{\phi(T(\bar\varphi + t_k \psi_k)(x)) - \phi(\bar T(x))}{t_k}
		\d u_0(x)
		\\
		&\le
		C_L
		\int_{\omega_0}  \frac{\abs{T(\bar\varphi + t_k \psi_k)(x) - \bar T(x)}}{t_k}
		\d u_0(x)
		,
	\end{align*}
	where we used $\lip(\phi) \le C_L \norm{\phi}_{C^1(\omega_1)}$,
	see \cref{lem:Lipschitz_constant_bounded_by_C1_norm}.
	By arguing as above, we can use the dominated convergence theorem
	to pass to the limit $k \to \infty$.
	This implies
	\begin{equation*}
		\limsup_{k \to \infty}
		\norm{\psi_k\dualspace}_{C^1(\omega_1)\dualspace}
		\le
		C_L
		\int_{\omega_0}
		\abs*{
			(\nabla^2_{\eta\eta} c(x, \bar T(x)) - \nabla^2 \bar\varphi(\bar T(x)))^{-1}
			\nabla \psi(\bar T(x))
		}
		\d u_0(x)
		.
	\end{equation*}
	We abbreviate $h(x) := \nabla^2_{\eta\eta} c(x, \bar T(x)) - \nabla^2 \bar\varphi(\bar T(x))$.
	With Hölder's inequality, we obtain
	\begin{equation*}
		\limsup_{k \to \infty}
		\norm{\psi_k\dualspace}_{C^1(\omega_1)\dualspace}
		\le
		C_L
		\sqrt{
			u_0(\omega_0)
		}
		\sqrt{
			\int_{\omega_0}
			\abs*{
				h(x)^{-1}
				\nabla \psi(\bar T(x))
			}^2
			\d u_0(x)
		}
		.
	\end{equation*}
	Next, we recall the inequality
	\begin{equation*}
		\abs{ A v }^2
		\le
		\norm{A} ( v^\top A v )
	\end{equation*}
	which holds for all symmetric, positive definite matrices $A \in \R^{d \times d}$
	and all vectors $v \in \R^d$.
	In our case, the norm of the matrix $h(x)^{-1}$
	is bounded by $1/\gamma$ due to \cref{assu:regularity_assumptions_on_T}.
	Thus, we find
	\begin{equation*}
		\limsup_{k \to \infty}
		\norm{\psi_k\dualspace}_{C^1(\omega_1)\dualspace}
		\le
		C_L
		\sqrt{
			\frac{u_0(\omega_0)}{\gamma}
		}
		\sqrt{
			\int_{\omega_0}
			\nabla \psi(\bar T(x))^\top
			h(x)^{-1}
			\nabla \psi(\bar T(x))
			\d u_0(x)
		}
		.
	\end{equation*}
	Using the disintegration formula \eqref{eq:disintegration}
	and the definition of $H$
	implies \eqref{eq:replacement_for_norm_convergence}.
\end{proof}

We will show now that
equality holds
in \Cref{lem:lower_bound_subderivative}
for $\psi\dualspace$ as in \eqref{eq:Hinvgradpsidu}.
This corresponds to \itemref{lem:density_lemma:2}.
\begin{theorem}
	\label{thm:Wpp_on_subspace}
	Let \Cref{assu:varphireg} be true.
	Let $\psi \in C^1(\omega_1)$ be given and define $\psi\dualspace := \eqclass{0}{(H^{-1} \nabla \psi) \bar u}$ as in \eqref{eq:Hinvgradpsidu}.
	Then,
	\begin{equation}
		\label{eq:Formel_4_20}
		\begin{aligned}
			\frac 1 2 \wdist''(\bar u, \bar\varphi; \psi\dualspace)
			&=
			\frac 12 \int_{\omega_1} \nabla \psi^\top H^{-1} \nabla \psi \d\bar u
			\\
			&=
			\inf\set*{
				\frac 1 2 \int_{\omega_1} \rho^\top H \rho \d \bar u
				\given
				\rho \in L^2(\bar u; \R^d),
				\psi\dualspace = \eqclass{0}{\rho \bar u}
			}
			.
		\end{aligned}
	\end{equation}
\end{theorem}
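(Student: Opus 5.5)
We establish the chain of equalities through two inequalities: a lower bound from \cref{lem:lower_bound_subderivative} and a matching upper bound from an explicit recovery sequence built in \cref{lem:recovery}. Throughout, write $q(\psi) := \int_{\omega_1} \nabla \psi^\top H^{-1} \nabla \psi \d\bar u$.

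\emph{Lower bound and the second equality.} Since $H$ is $\bar u$-a.e.\ symmetric positive definite, the infimum in \cref{lem:lower_bound_subderivative} at $\dualelement = \psi\dualspace$ is at most its value at $\rho = H^{-1}\nabla\psi$. That value is $\frac12 \int \nabla\psi^\top H^{-1} H H^{-1} \nabla\psi \d\bar u = \frac12 q(\psi)$. For the reverse inequality, take any admissible $\rho$, i.e.\ $\eqclass{0}{\rho\bar u} = \psi\dualspace$. Testing this identity with $\psi$ itself gives
\[
\int \rho^\top \nabla\psi \d\bar u = \int \nabla\psi^\top H^{-1}\nabla\psi \d\bar u .
\]
Combining this with the pointwise inequality $\frac12 \rho^\top H\rho \ge \rho^\top \nabla\psi - \frac12 \nabla\psi^\top H^{-1}\nabla\psi$ yields $\frac12\int\rho^\top H\rho \d\bar u \ge \frac12 q(\psi)$. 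Hence the infimum equals $\frac12 q(\psi)$, which is the second equality. \cref{lem:lower_bound_subderivative} then gives $\frac12\wdist''(\bar u,\bar\varphi;\psi\dualspace) \ge \frac12 q(\psi)$.

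\emph{Upper bound.} Fix $t_k\searrow 0$, take $\psi_k := \psi$, and set $u_k := T(\bar\varphi + t_k\psi)\#u_0$. Define $h_k := \psi_k\dualspace$ as in \eqref{eq:psi_k^*}, so that $\bar u + t_k h_k = \eqclass{u_k}{0}$. By \cref{lem:recovery}, $h_k \weaklystar \psi\dualspace$. We then bound the difference quotient using the Fenchel--Young equality. The discussion after \cref{def:T_operator} shows that $\bar\varphi + t_k\psi$ is a Kantorovich potential of $u_k$, so
\[
\wdist(\bar u + t_k h_k) = \dual{\bar u + t_k h_k}{\bar\varphi + t_k\psi} - J(\bar\varphi + t_k\psi),
\]
and likewise $\wdist(\bar u) = \dual{\bar u}{\bar\varphi} - J(\bar\varphi)$. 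Subtracting and removing $t_k\dual{h_k}{\bar\varphi}$ gives
\begin{multline*}
\wdist(\bar u + t_k h_k) - \wdist(\bar u) - t_k\dual{h_k}{\bar\varphi} \\
= t_k\dual{\bar u}{\psi} + t_k^2\dual{h_k}{\psi} - \bigl(J(\bar\varphi + t_k\psi) - J(\bar\varphi)\bigr).
\end{multline*}
Dividing by $t_k^2/2$, the right-hand side becomes
\[
2\dual{h_k}{\psi} - \frac{J(\bar\varphi + t_k\psi) - J(\bar\varphi) - t_k\dual{\bar u}{\psi}}{t_k^2/2}.
\]
By weak-$\star$ convergence of $h_k$ and by \cref{thm:J''}, this tends to $2\dual{\psi\dualspace}{\psi} - q(\psi) = 2q(\psi) - q(\psi) = q(\psi)$. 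Therefore $\wdist''(\bar u,\bar\varphi;\psi\dualspace) \le q(\psi)$, and together with the lower bound the first equality follows. The argument also shows that $(h_k)$ is a weak-$\star$ recovery sequence.

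The main obstacle is ensuring that the objects are exactly those covered by the earlier lemmas. First, $T(\bar\varphi + t_k\psi)$ must be a measurable minimizer selection, so that $u_k$ is well defined and the Fenchel--Young identity holds. Second, the recovery sequence $h_k$ must converge weak-$\star$ to $\psi\dualspace$. Both points are supplied by \cref{def:T_operator} and \cref{lem:recovery}. The only remaining check is the sign convention $w = \bar\varphi$ in \cref{def:wsss}, which matches $\bar\varphi \in \partial\wdist(\bar u)$.
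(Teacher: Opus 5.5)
Your proof is correct and follows the paper's argument. It uses the same Fenchel--Young identities for $J$ at $\bar\varphi + t_k\psi$ and $\bar\varphi$, the recovery sequence $\psi_k\dualspace$ from \cref{lem:recovery} together with \cref{thm:J''} for the upper bound, and \cref{lem:lower_bound_subderivative} for the lower bound; the only minor difference is that you evaluate the infimum explicitly via the pointwise inequality $\frac12\rho^\top H\rho \ge \rho^\top\nabla\psi - \frac12\nabla\psi^\top H^{-1}\nabla\psi$, whereas the paper closes the chain using $\dual{\psi\dualspace}{\psi} - \frac12 J''(\bar\varphi;\psi) \le (\frac12 J''(\bar\varphi;\cdot))\dualspace(\psi\dualspace)$.
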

\begin{proof}
	We use the idea of \cite[Lemma~4.5]{WachsmuthWachsmuth2022}.
	The application of \Cref{lem:subdiff_J} on $\bar \varphi + t_k \psi$ and $\bar\varphi$
	yields
	$\bar u \in \partial J(\bar\varphi)$
	and
	\begin{equation*}
		\eqclass{T(\bar\varphi + t_k \psi) \# u_0}{0}
		=
		\bar u + t_k \psi_k\dualspace
		\in
		\partial J(\bar\varphi + t_k \psi)
		,
	\end{equation*}
	where we used $\psi_k\dualspace$ from \eqref{eq:psi_k^*} with $\psi_k \equiv \psi$.
	Thus, the Fenchel--Young identity
	implies
	\begin{align*}
		J(\bar\varphi + t_k \psi)
		+
		\wdist(\bar u + t_k \psi_k\dualspace)
		&=
		\dual{\bar u + t_k \psi_k\dualspace}{\bar\varphi + t_k \psi}
		,
		\\
		\mrep{
			J(\bar\varphi)
		}{
			J(\bar\varphi + t_k \psi)
		}
		+
		\mrep{
			\wdist(\bar u)
		}{
			\wdist(\bar u + t_k \psi_k\dualspace)
		}
		&=
		\dual{
			\mrep{
				\bar u
			}{
				\bar u + t_k \psi_k\dualspace
			}
		}{
			\mrep{
				\bar\varphi
			}{
				\bar\varphi + t_k \psi
			}
		}
		.
	\end{align*}
	Taking the difference, rearranging and dividing by $t_k^2$
	yield
	\begin{equation*}
		\frac 1 2
		\frac{J(\bar\varphi+t_k \psi) - J(\bar\varphi) - t_k \dual{\bar u}{\psi}}{t_k^2 / 2}
		+
		\frac 1 2 \frac{\wdist(\bar u + t_k \psi_k\dualspace) - \wdist(\bar u) - t_k \dual{\psi_k\dualspace}{\bar\varphi}}{t_k^2 / 2}
		=
		\dual{\psi_k\dualspace}{\psi}
		.
	\end{equation*}
	The first addend converges towards $\frac12 J''(\bar\varphi; \psi)$,
	cf.\ \Cref{thm:J''}.
	The right-hand side converges towards $\dual{\psi\dualspace}{\psi}$ by \cref{lem:recovery}.
	Together with $\psi_k\dualspace \weaklystar \psi\dualspace$ and
	the definition of the weak-$\star$ second subderivative
	we get
	\begin{align*}
		\frac 1 2 \wdist''(\bar u, \bar\varphi; \psi\dualspace)
		&\le
		\frac 1 2 \liminf_{k\to\infty}
		\frac{\wdist(\bar u + t_k \psi_k\dualspace) - \wdist(\bar u) - t_k \dual{\psi_k\dualspace}{\bar\varphi}}{t_k^2 / 2}
		\\&
		=
		\dual{\psi\dualspace}{\psi}
		-
		\frac 1 2 J''(\bar\varphi; \psi)
		\le
		\parens*{ \frac 1 2 J''(\bar\varphi; \cdot) }\dualspace(\psi\dualspace)
		.
	\end{align*}
	The other inequality was already shown in \Cref{lem:lower_bound_subderivative}.
	Thus, all inequalities in the previous formula are equalities.
	With \cref{thm:J''} and \cref{lem:lower_bound_subderivative},
	we obtain \eqref{eq:Formel_4_20}.
\end{proof}
Formula \eqref{eq:Formel_4_20} is only valid for special elements of $C^1(\omega_1)\dualspace$
as given in \eqref{eq:Hinvgradpsidu}.
Interestingly, it also shows that the infimum is attained by the density
$\rho = H^{-1} \nabla\psi$.
We will use the set of these functionals similar to $V$ in \Cref{lem:density_lemma}.
For dealing with an arbitrary direction,
we need some density argument,
cf. \itemref{lem:density_lemma:3}.

\begin{lemma}
	\label{lem:density_for_second_subderivative}
	Under \cref{assu:varphireg},
	we consider the following subsets of $C^1(\omega_1)\dualspace$,
	\begin{align*}
		M &:=
		\set*{
			\eqclass{0}{\rho \bar u}
			\given
			\rho \in L^2(\bar u; \R^d),
			\int_{\omega_1} \rho^\top H \rho \d\bar u \le 1
		}
		,
		\\
		N &:=
		\set*{
			\eqclass{0}{(H^{-1} \nabla \psi ) \bar u}
			\given
			\psi \in C^1(\omega_1; \R^d),
			\int_{\omega_1} \nabla\psi^\top H^{-1} \nabla\psi \d\bar u \le 1
		}
		.
	\end{align*}
	Then, every element of $M$
	is the weak-$\star$ limit of a sequence from $N$.
\end{lemma}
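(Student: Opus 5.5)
The approach is a Hilbert space argument, followed by a transfer of the result to weak-$\star$ convergence in $C^1(\omega_1)\dualspace$. Consider the weighted space $L^2_H := L^2(\bar u;\R^d)$ with inner product $(\rho,\sigma)_H := \int_{\omega_1} \rho^\top H \sigma \d\bar u$. By \cref{thm:J''} the eigenvalues of $H$ lie in $[\gamma, C_H]$ $\bar u$-a.e., so this norm is equivalent to the usual $L^2(\bar u;\R^d)$ norm. Inside it, look at the linear subspace $G := \set{ H^{-1}\nabla\psi \given \psi \in C^1(\omega_1)}$; here I read $\psi \in C^1(\omega_1)$, scalar, in the definition of $N$. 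For $\rho = H^{-1}\nabla\psi$ one has $\int \rho^\top H\rho \d\bar u = \int \nabla\psi^\top H^{-1}\nabla\psi\d\bar u$. Hence $N$ is exactly the image of the unit ball of $G$ under the linear map $A\colon \rho\mapsto \eqclass{0}{\rho\bar u}$, and $M$ is the image of the unit ball of all of $L^2_H$.

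Now fix $\eqclass{0}{\rho\bar u}\in M$ and decompose $\rho = \rho_G + \rho_\perp$ with $\rho_G \in \bar G$ (closure in $L^2_H$) and $\rho_\perp \perp_H G$. The orthogonality means
\[
\int_{\omega_1} \rho_\perp^\top H H^{-1}\nabla\phi \d\bar u = \int_{\omega_1}\rho_\perp^\top\nabla\phi\d\bar u = 0 \quad\forall\phi\in C^1(\omega_1),
\]
that is, $\eqclass{0}{\rho_\perp\bar u}=0$. Consequently $\eqclass{0}{\rho\bar u}=\eqclass{0}{\rho_G\bar u}$, while $\norm{\rho_G}_H\le\norm{\rho}_H\le 1$. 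Next choose $\psi_n\in C^1(\omega_1)$ with $H^{-1}\nabla\psi_n\to\rho_G$ in $L^2_H$. To land in the unit ball, rescale by $s_n := \min\set{1, 1/\norm{H^{-1}\nabla\psi_n}_H}$. Since $\norm{\rho_G}_H\le1$, we get $s_n\to1$ whenever $\norm{\rho_G}_H = 1$; in the general case the rescaled sequence still converges to $\rho_G$, because $\norm{H^{-1}\nabla\psi_n}_H \to \norm{\rho_G}_H \le 1$. So we may assume $\norm{H^{-1}\nabla\psi_n}_H\le 1$ and $H^{-1}\nabla\psi_n\to\rho_G$ in $L^2_H$, which places the corresponding elements in $N$.

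It remains to check that $A$ maps $L^2_H$-convergent sequences to weak-$\star$ (in fact norm) convergent ones. For $\phi\in C^1(\omega_1)$,
\[
\abs{\dual{\eqclass{0}{\sigma\bar u}}{\phi}}\le \norm{\nabla\phi}_{C}\int\abs{\sigma}\d\bar u \le \norm{\phi}_{C^1}\,\bar u(\omega_1)^{1/2}\gamma^{-1/2}\norm{\sigma}_H .
\]
Hence $\eqclass{0}{H^{-1}\nabla\psi_n\,\bar u}\to\eqclass{0}{\rho_G\bar u}=\eqclass{0}{\rho\bar u}$ in norm, and in particular weak-$\star$.

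None of these steps is a real obstacle. The one point needing care is the projection step: it uses that the kernel of $A$ is precisely $G^{\perp_H}$, which the displayed identity supplies, so the $L^2_H$-norm can only decrease when $\rho$ is replaced by its projection onto $\bar G$. Measurability and boundedness of $H$ are already provided by \cref{thm:J''}.
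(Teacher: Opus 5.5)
Your proof is correct, but it takes a different route from the paper's. The paper argues by polarity: it shows $N\polar \subset M\polar$ via the Cauchy--Schwarz inequality for the $H$-weighted inner product, and deduces $M \subset M\bipolar \subset N\bipolar = \cl_\star N$ from the bipolar theorem. It then needs a separate step to turn weak-$\star$ closure points into limits of sequences: $N$ is bounded and $C^1(\omega_1)$ is separable, so the weak-$\star$ topology is metrizable on bounded sets. You work instead on the primal side, in the Hilbert space $L^2(\bar u;\R^d)$ with $(\cdot,\cdot)_H$. There the identity $\ker A = G^{\perp_H}$ plays the role of the polar inclusion. The orthogonal projection onto $\overline G$ yields $\rho_G$, the representative of $\eqclass{0}{\rho\bar u}$ with minimal $H$-norm. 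Finally, the estimate $\norm{A\sigma}_{C^1(\omega_1)\dualspace} \le \bar u(\omega_1)^{1/2}\gamma^{-1/2}\norm{\sigma}_H$ transfers $L^2$-convergence to $C^1(\omega_1)\dualspace$.

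Your route is more elementary, since it needs neither the bipolar theorem nor the metrizability argument, and it gives strictly more. First, the approximating elements of $N$ converge in norm, not merely weak-$\star$. Second, $\int_{\omega_1}\nabla\psi_n^\top H^{-1}\nabla\psi_n \,\mathrm{d}\bar u \to \norm{\rho_G}_H^2$, which is twice the infimum in \eqref{eq:weakstarsecondsubderivD}, because the feasible densities are exactly $\rho_G + G^{\perp_H}$. This also explains directly why that infimum is attained. In particular, condition (iii) of \cref{lem:density_lemma} holds here with norm convergence. So the obstacle to strict twice epi-differentiability mentioned after \cref{thm:second_subdiff_W22_epidiff} comes only from \cref{lem:recovery}.

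Two minor remarks. Your reading of $\psi$ as a scalar function in $C^1(\omega_1)$ is the intended one; the $\R^d$ in the definition of $N$ is a typo. The rescaling step can be stated more simply: $s_n \to 1$ in every case, because $\limsup_n \norm{H^{-1}\nabla\psi_n}_H = \norm{\rho_G}_H \le 1$.
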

\begin{proof}
	We consider the polar sets, i.e.,
	\begin{align*}
		M\polar
		&:=
		\set{ \phi \in C^1(\omega_1) \given \dual{\dualelement}{\phi} \le 1 \; \forall \dualelement \in M },
		\\
		M\bipolar
		&:=
		\set{ \dualelement \in C^1(\omega_1)\dualspace \given \dual{\dualelement}{\phi} \le 1 \; \forall \phi \in M\polar }
	\end{align*}
	and $N\polar$, $N\bipolar$ are defined analogously.
	First, we show $N\polar \subset M\polar$.
	Let $g \in N\polar$ be given.
	The function $\psi := g / \sqrt{\int_{\omega_1} \nabla g^\top H^{-1} \nabla g \d\bar u} \in C^1(\omega_1; \R^d)$
	satisfies
	\begin{equation*}
		\int_{\omega_1} \nabla \psi^\top H^{-1} \nabla \psi \d\bar u
		=
		\frac{
			\int_{\omega_1} \nabla g^\top H^{-1} \nabla g \d\bar u
		}{
			\int_{\omega_1} \nabla g^\top H^{-1} \nabla g \d\bar u
		}
		=
		1
		.
	\end{equation*}
	Hence, the corresponding functional
	$\psi\dualspace := \eqclass{0}{(H^{-1}\nabla\psi) \bar u}$
	belongs to $N$.
	From $g \in N\polar$, we thus get
	\begin{equation*}
		1
		\ge
		\dual{\psi\dualspace}{g}
		=
		\int_{\omega_1} \nabla \psi^\top H^{-1} \nabla g \d\bar u
		=
		\sqrt{\int_{\omega_1} \nabla g^\top H^{-1} \nabla g \d\bar u}
		.
	\end{equation*}
	Now, let $\rho \in L^2(\bar u; \R^d)$ with
	$\int_{\omega_1} \rho^\top H \rho \d\bar u \le 1$ be arbitrary.
	Then, the Cauchy--Schwarz inequality
	on $\R^d$ equipped with the inner product induced by the matrix $H(\eta)$
	yields
	\begin{equation*}
		\abs{
			\rho^\top \nabla g
		}
		=
		\abs{
			\rho^\top H (H^{-1} \nabla g)
		}
		\le
		\sqrt{
			\rho^\top H \rho
		}
		\sqrt{
			\nabla g^\top H^{-1} \nabla g
		}
	\end{equation*}
	on $\omega_1$.
	Integration and using Cauchy--Schwarz in $L^2(\bar u)$
	give
	\begin{equation*}
		\dual{\eqclass{0}{\rho \bar u}}{g}
		=
		\int_{\omega_1} \rho^\top \nabla g \d\bar u
		\le
		\sqrt{\int_{\omega_1} \rho^\top H \rho \d\bar u}
		\sqrt{\int_{\omega_1} \nabla g^\top H^{-1} \nabla g \d\bar u}
		\le
		1.
	\end{equation*}
	Since $\rho$ was arbitrary,
	this shows $g \in M\polar$.
	Since $g$ was arbitrary, we have $N\polar \subset M\polar$.

	This implies $M \subset M\bipolar \subset N\bipolar$,
	while the bipolar theorem yields that $N\bipolar$ coincides
	with the closure of $N$ in the weak-$\star$ topology of $C^1(\omega_1)\dualspace$,
	see \cite[Theorem~IV.1.5]{SchaeferWolff1999}.
	Thus, $M \subset \cl_\star N$.
	Note that this does not immediately prove the existence of the desired sequence,
	since the weak-$\star$ topology is not sequential.
	However, the set $N$ is also bounded in $C^1(\omega_1)\dualspace$,
	since, for every $\psi\dualspace \in N$ and $\phi \in C^1(\omega_1)$,
	the lower bound on the eigenvalues of $H$ from \cref{thm:J''} yields that
	\begin{align*}
		\dual{\psi\dualspace}{\phi}
		&=
		\int_{\omega_1} \nabla\psi^\top H^{-1} \nabla \phi \d\bar u
		\le
		\sqrt{
		\int_{\omega_1} \nabla\psi^\top H^{-1} \nabla \psi \d\bar u
		}
		\sqrt{
		\int_{\omega_1} \nabla \phi^\top H^{-1} \nabla \phi \d\bar u
		}
		\\
		&\le
		\sqrt{
			\int_{\omega_1} \nabla \phi^\top H^{-1} \nabla \phi \d\bar u
		}
		\le
		C \norm{\phi}_{C^1(\omega_1)},
	\end{align*}
	where $\psi$ is the function used in the definition of $\psi\dualspace \in N$.
	Together with the separability of $C^1(\omega_1)$,
	we get that the weak-$\star$ topology is metrizable on the bounded set $\cl_\star N$,
	see \cite[Theorem~3.28]{Brezis2011}.
	The existence of the desired sequence follows.
\end{proof}

As a final step, we aim to prove epi-differentiability
which we will do similar to the proof of \Cref{lem:density_lemma}.
Note that we do not have the convergence of the norms as demanded in
\itemref{lem:density_lemma:2} and \ref{lem:density_lemma:3},
cf.\ \cref{lem:recovery,lem:density_for_second_subderivative}.
However, the proof of \cref{lem:density_lemma}
carries over and we finally obtain the weak-$\star$ epi-differentiability.

\begin{theorem}
	\label{thm:second_subdiff_W22_epidiff}
	Let \Cref{assu:varphireg} be true.
	Then,
	\begin{equation}\label{eq:weakstarsecondsubderivD}
		\begin{aligned}
			\frac 1 2 \wdist''(\bar u, \bar\varphi; \dualelement)
			&=
			\inf\set*{
				\frac 1 2 \int_{\omega_1} \rho^\top H \rho \d \bar u
				\given
				\rho \in L^2(\bar u; \R^d),
				\dualelement = \eqclass{0}{\rho \bar u}
			}
		\end{aligned}
	\end{equation}
	holds for all $\dualelement \in C^1(\omega_1)\dualspace$.
	The infimum is attained whenever the set on the right-hand side is not empty.
	Further, $\wdist$ is weak-$\star$ twice epi-differentiable at $\bar u$ for $\bar\varphi$.
\end{theorem}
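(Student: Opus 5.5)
We follow the proof of \cref{lem:density_lemma}, replacing the norm convergence in its items by uniform norm bounds. Denote by $Q(\dualelement)$ the right-hand side of \eqref{eq:weakstarsecondsubderivD}, i.e., $Q = (\frac12 J''(\bar\varphi;\cdot))\dualspace$. By \cref{lem:lower_bound_subderivative}, $\frac12\wdist''(\bar u,\bar\varphi;\cdot) \ge Q$, and the infimum in $Q$ is attained whenever the feasible set is nonempty. It therefore suffices to show the following. For every $\dualelement$ with $Q(\dualelement)<\infty$ and every null sequence $t_k \searrow 0$, there exists a recovery sequence $h_k \weaklystar \dualelement$ with
\[
\limsup_{k\to\infty} \frac{\wdist(\bar u + t_k h_k) - \wdist(\bar u) - t_k\dual{h_k}{\bar\varphi}}{t_k^2/2} \le 2Q(\dualelement).
\]
Combined with the lower bound, this gives equality, and the limit exists, which is epi-differentiability. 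If $Q(\dualelement)=\infty$, any sequence $h_k\weaklystar\dualelement$, e.g.\ the constant sequence, works trivially.

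First, we treat directions in $V := \set{\psi\dualspace = \eqclass{0}{(H^{-1}\nabla\psi)\bar u} \given \psi\in C^1(\omega_1)}$. Given $t_k$, we use $h_k := \psi_k\dualspace$ from \eqref{eq:psi_k^*} with $\psi_k\equiv\psi$. The Fenchel--Young computation in the proof of \cref{thm:Wpp_on_subspace} shows that the difference quotient of $\wdist$ along $h_k$ equals $2\dual{\psi_k\dualspace}{\psi} - \frac{J(\bar\varphi+t_k\psi)-J(\bar\varphi)-t_k\dual{\bar u}{\psi}}{t_k^2/2}$. By \cref{thm:J''} and \cref{lem:recovery}, this converges to $2\dual{\psi\dualspace}{\psi} - J''(\bar\varphi;\psi) = J''(\bar\varphi;\psi) = 2Q(\psi\dualspace)$, where the last identity is \cref{thm:Wpp_on_subspace}. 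Moreover, $h_k \weaklystar \psi\dualspace$, and \eqref{eq:replacement_for_norm_convergence} gives $\limsup\norm{h_k} \le C\sqrt{2Q(\psi\dualspace)}$ with a constant $C$ independent of $\psi$.

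Second, let $\dualelement$ be general with $Q(\dualelement) <\infty$, so $\dualelement=\eqclass{0}{\rho\bar u}$ with $\frac12\int\rho^\top H\rho \d\bar u = Q(\dualelement)$. After scaling (if $Q(\dualelement)=0$ we take $\psi^l=0$; otherwise we scale by $r = \sqrt{2Q(\dualelement)}$), \cref{lem:density_for_second_subderivative} yields $\psi^l$ with $\psi^{l\dualspace}\weaklystar\dualelement$ and $2Q(\psi^{l\dualspace}) = \int\nabla\psi^{l\top}H^{-1}\nabla\psi^l\d\bar u \le 2Q(\dualelement)$. By the first step, for each $l$ there is a recovery sequence $h^l_k$ such that the quotients converge to $2Q(\psi^{l\dualspace}) \le 2Q(\dualelement)$, $h^l_k \weaklystar \psi^{l\dualspace}$, and the norms are eventually bounded by $C\sqrt{2Q(\dualelement)}+1$, uniformly in $l$.

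The main obstacle is the diagonal extraction, since weak-$\star$ convergence is not normable. Here we use that all $h^l_k$ and $\dualelement$ lie in a fixed ball of $C^1(\omega_1)\dualspace$. Because $C^1(\omega_1)$ is separable, the weak-$\star$ topology on this ball is metrized by some metric $d$ \cite[Theorem~3.28]{Brezis2011}. We choose an increasing $k_l$ such that for all $k\ge k_l$ we have $d(h^l_k,\psi^{l\dualspace})\le 1/l$, the norm bound holds, and the quotient is at most $2Q(\dualelement)+1/l$. We then set $h_k := h^l_k$ for $k_l\le k<k_{l+1}$. This gives $h_k\weaklystar\dualelement$ and a $\limsup$ of the quotients of at most $2Q(\dualelement)$, completing the proof.
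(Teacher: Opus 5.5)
Your proposal is correct and takes essentially the same route as the paper. The lower bound comes from \cref{lem:lower_bound_subderivative}, and the minimizing $\rho$ is approximated via \cref{lem:density_for_second_subderivative}. Recovery sequences on the subspace come from \cref{lem:recovery} together with the Fenchel--Young identity in the proof of \cref{thm:Wpp_on_subspace}, and the diagonal argument is justified by the uniform bound \eqref{eq:replacement_for_norm_convergence} and weak-$\star$ metrizability of bounded sets. The differences are cosmetic: you use a metric instead of the paper's dense sequence $(f_i)$, a $\limsup$ bound $2Q(\dualelement)+1/l$ instead of a subsequence realizing the $\liminf$, and you treat $Q(\dualelement)\in\{0,\infty\}$ explicitly. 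One small addition is needed: the $\psi^{l\star}$ must also lie in your fixed ball for $d(h^l_k,\psi^{l\star})$ to make sense, which follows from weak-$\star$ lower semicontinuity of the norm.
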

\begin{proof}
	To abbreviate the right-hand side of the claimed identity, we define
	\begin{equation*}
		Q(\dualelement)
		:=
		\parens*{ \frac12 J''(\bar\varphi;\cdot)}\dualspace(\dualelement)
		=
		\inf\set*{
			\int_{\omega_1} \rho^\top H \rho \d \bar u
			\given
			\rho \in L^2(\bar u; \R^d),
			\dualelement = \eqclass{0}{\rho \bar u}
		}
		.
	\end{equation*}
	Let $\dualelement \in C^1(\omega_1)\dualspace$ be arbitrary.
	From \cref{lem:lower_bound_subderivative}
	we get $\wdist''(\bar u, \bar\varphi; \dualelement) \ge Q(\dualelement)$.
	It remains to show the other inequality.
	Thus, it is sufficient to consider the case that
	$Q(\dualelement)$ is finite.
	Consequently,
	\cref{lem:lower_bound_subderivative}
	yields
	$\rho \in L^2(\bar u; \R^d)$
	such that
	$\dualelement = \eqclass{0}{\rho \bar u}$
	and
	$Q(\dualelement) = \int_{\omega_1} \rho^\top H \rho \d\bar u$.

	Next, we approximate $\eqclass{0}{\rho \bar u}$
	by elements for which \cref{thm:Wpp_on_subspace} is applicable.
	To this end, define $\tilde\rho := \rho / \sqrt{\int_{\omega_1} \rho^\top H \rho \d\bar u} \in M$.
	By \cref{lem:density_for_second_subderivative}, there exists a sequence $\seq{\tilde\psi_n} \subset C^1(\omega_1)$
	such that $\eqclass{0}{(H^{-1} \nabla \tilde\psi_n ) \bar u} \weaklystar \eqclass{0}{\tilde\rho \bar u}$ and 
	$ \int_{\omega_1} \nabla \tilde\psi_n^\top H^{-1} \nabla \tilde\psi_n \d\bar u \leq 1$.
	Therefore, if we set $\psi_n := \tilde\psi_n \,\sqrt{\int_{\omega_1} \rho^\top H \rho \d\bar u} $, we obtain
	$\psi_n\dualspace := \eqclass{0}{(H^{-1} \nabla \psi_n ) \bar u} \weaklystar \eqclass{0}{\rho \bar u} = \dualelement$
	and
	\begin{equation*}
		Q(\dualelement)
		=
		\int_{\omega_1} \rho^\top H \rho \d\bar u
		\ge
		\int_{\omega_1} \nabla \psi_n^\top H^{-1} \nabla \psi_n \d\bar u
		\quad \forall\, n\in \N
		.
	\end{equation*}
	From \eqref{eq:Formel_4_20} and the definition of $Q$, we obtain that the right-hand side equals
	$Q(\psi_n\dualspace)$.
	Taking the limit inferior on both sides
	gives
	\begin{equation}
		\label{eq:Q_inequality}
		\wdist''(\bar u, \bar\varphi; \dualelement)
		\ge
		Q(\dualelement)
		\ge
		\liminf_{n \to \infty} Q(\psi_n\dualspace)
		.
	\end{equation}
	We choose a subsequence without renaming
	that realizes the limit inferior, i.e.,
	$\liminf_{n \to \infty} Q(\psi_n\dualspace) = \lim_{n \to \infty} Q(\psi_n\dualspace)$.
	As in the proof of \Cref{lem:density_lemma},
	we choose a recovery sequence for each of the $\psi_n\dualspace$
	(cf.\ \itemref{lem:density_lemma:2})
	and construct a diagonal sequence.
	As a first step, we note that
	\Cref{lem:recovery}
	implies
	\[
		\psi_{n,k}\dualspace
		:=
		\eqclass*{\frac{T(\bar\varphi + t_k \psi_n) \# u_0 - T(\bar\varphi) \# u_0 }{t_k}}{0}
		\weaklystar
		\eqclass{0}{(H^{-1} \nabla \psi_n) \bar u} = \psi_n\dualspace
	\]
	for $k \to \infty$ for all $n \in \N$.
	By \eqref{eq:replacement_for_norm_convergence} we obtain
	\begin{equation}
		\label{eq:sigmank*_bounded_in_k}
		\limsup_{k\to\infty} \norm{\psi_{n,k}\dualspace}_{C^1(\omega_1)\dualspace}
		\le
		C_L
		\sqrt{\frac{u_0(\omega_0)}{\gamma}}
		\sqrt{
			\int_{\omega_1} \nabla \psi_n^\top H^{-1} \nabla \psi_n \d \bar u
		}
		\le
		C
	\end{equation}
	with $C = C_L \sqrt{u_0(\omega_0) Q(\dualelement) /\gamma}$.
	In the last step of the proof, we construct a diagonal sequence
	which converges weak-$\star$ to $\dualelement$.
	As $C^1(\omega_1)$ is separable, we choose a dense subset $\seq{f_i} \subset C^1(\omega_1)$.
	The weak-$\star$ convergence $\psi_{n,k}\dualspace \weaklystar \psi_n\dualspace$ for $k \to \infty$
	as well as
	the proof of \cref{thm:Wpp_on_subspace}
	allow us to find a (strictly) increasing sequence $\seq{K_n} \subset \N$ with $K_1 := 1$
	such that for all $n \ge 2$
	\begin{equation}
		\label{eq:diag_sequence_boundedness_condition}
		\sum_{i=1}^n \abs{\dual{\psi_{n,k}\dualspace - \psi_n\dualspace}{f_i}}
		+
		\abs*{
			Q(\psi_n\dualspace)
			-
			\frac{\wdist(\bar u + t_k \psi_{n,k}\dualspace) - \wdist(\bar u) - t_k \dual{\psi_{n,k}\dualspace}{\bar\varphi}}{t_k^2 / 2}
		}
		\le
		\frac 1 n
	\end{equation}
	for all $k \ge K_n$
	and
	$\norm{\psi_{n,k}\dualspace}_{C^1(\omega_1)\dualspace} \le C + 1$
	for all $k \ge K_n$ (which is possible due to \eqref{eq:sigmank*_bounded_in_k}).
	Next, we define $n_k := \max\set{n \in \N \given k \ge K_n}$ (this is well-defined by $K_1 = 1$).
	As $\seq{K_n}$ is increasing, $n_k \to \infty$ monotonously for $k \to \infty$.
	This,
	$k \ge K_{n_k}$ and \eqref{eq:diag_sequence_boundedness_condition} show that
	$\hat \psi_k\dualspace := \psi_{n_k, k}\dualspace$ satisfies
	$\dual{\hat \psi_k\dualspace - \psi_{n_k}\dualspace}{f_i} \to 0$ for all $i \in \N$ and
	\begin{equation}
		\label{eq:diag_sequence_Wpp_converges}
		\abs*{
			Q(\psi_{n_k}\dualspace)
			-
			\frac{\wdist(\bar u + t_k \hat\psi_k\dualspace) - \wdist(\bar u) - t_k \dual{\hat \psi_k\dualspace}{\bar\varphi}}{t_k^2 / 2}
		}
		\to
		0
	\end{equation}
	for $k \to \infty$.
	Together with $\psi_{n_k}\dualspace \weaklystar \dualelement$
	we get
	$\dual{\hat \psi_k\dualspace - \dualelement}{f_i} \to 0$ for all $i \in \N$.
	Since we also have the boundedness
	$\norm{\hat\psi_k\dualspace}_{C^1(\omega_1)\dualspace} \le C + 1$,
	this implies
	$\hat\psi_k\dualspace \weaklystar \dualelement$.
	
	Finally, we check that $\seq{\hat \psi_k\dualspace}$ is a recovery sequence.
	As we chose $\psi_n\dualspace$ such that $Q(\psi_n\dualspace)$ converges,
	\eqref{eq:diag_sequence_Wpp_converges} yields
	\begin{equation*}
		\lim_{n \to \infty} Q(\psi_n\dualspace)
		=
		\lim_{k \to \infty} Q(\psi_{n_k}\dualspace)
		=
		\lim_{k\to\infty}
		\frac{\wdist(\bar u + t_k \hat\psi_k\dualspace) - \wdist(\bar u) - t_k \dual{\hat \psi_k\dualspace}{\bar\varphi}}{t_k^2 / 2}
		.
	\end{equation*}
	From \eqref{eq:Q_inequality} and the definition of the weak-$\star$ second subderivative,
	we get
	\begin{equation*}
		\wdist''(\bar u, \bar\varphi, \dualelement)
		\ge
		\lim_{k\to\infty}
		\frac{\wdist(\bar u + t_k \hat\psi_k\dualspace) - \wdist(\bar u) - t_k \dual{\hat \psi_k\dualspace}{\bar\varphi}}{t_k^2 / 2}
		\ge
		\wdist''(\bar u, \bar\varphi, \dualelement)
		.
	\end{equation*}
	This shows that $\seq{\hat\psi_k\dualspace}$ is indeed a recovery sequence.
\end{proof}
Note that the original blueprint from \Cref{lem:density_lemma}
even yields strict twice epi-differentiability.
In our case,
this does not seem to be possible,
since the approximation results
\cref{lem:recovery,lem:density_for_second_subderivative}
do not provide convergence of the norms.
However,
for the application of the second-order theory
strict twice epi-differentiability is not needed,
see \cref{subsec:abstract_second_order}.
We mention that only the strong twice epi-differentiability
allows to replace the
sequential weak-$\star$ continuity of $h \mapsto F''(\bar x) h^2$
in \cref{thm:abstract_second_order_result}
by
sequential weak-$\star$ lower semicontinuity,
see \cite[Theorem~2.20]{BorchardWachsmuth2024}.
However,
it is not expected that our functional $\wdist$
is strongly twice epi-differentiable.

\begin{remark}
	\label{rem:comparison}
	As mentioned in the introduction,
	\cite[Theorem~1.4]{Caja-LopezDelgadinoKitagawa2026}
	provides a second-order differentiability result for the squared $2$-Wasserstein distance,
	i.e., $c(x, \eta) = \abs{x - \eta}^2$,
	under rather restrictive regularity assumptions.
	Translated to our language,
	they assume that the sets
	$\omega_0 = \supp(u_0), \bar\omega := \supp(\bar u)$ are bounded, uniformly convex with $C^{3,\alpha}$ boundary, $\alpha > 0$,
	and
	that the measures $u_0, \bar u$ have a uniformly positive and Hölder regular
	density w.r.t.\ the Lebesgue measure, which we denote by 
	$U_0 := \frac{\d u_0}{\d \lambda^d} \in C^{0,\alpha}(\omega_0)$
	and $\bar U := \frac{\d \bar u}{\d \lambda^d} \in C^{0,\alpha}(\bar\omega)$.
	Given perturbations $h \in C^{0,\alpha}(\bar\omega)$ and  $k \in C^{0,\alpha}(\omega_0)$
	with
		\begin{equation*}
			\int_{\omega_0} U_0 k \d \lambda^d
			=
			\int_{\bar\omega} \bar U h \d \lambda^d
			=
			0
			,
		\end{equation*}
	they prove that the map
	\begin{equation*}
		\KK \colon \R \ni t
		\mapsto
		\inf\set*{
			\int_{\omega_0 \times \bar\omega} \abs{x - \eta}^2 \d\pi(x,\eta)
			\given
			\pi \in
			\Gamma\parens*{
				U_0 (1 + t k) \lambda^d
				,
				\bar U (1 + t h) \lambda^d
			}
		}
	\end{equation*}
	is twice differentiable at $t = 0$
	and the corresponding second-order derivative is computed.
	Note that such a differentiability result is not suited for the derivation
	of second-order sufficient optimality conditions.

	In what follows, we argue that, 
	in the setting of  \cite[Theorem~1.4]{Caja-LopezDelgadinoKitagawa2026},
	the second derivative of $\KK$ at $0$
	coincides with the right-hand side of
	our formula \eqref{eq:weakstarsecondsubderivD}
	for the weak-$\star$ second subderivative, 
	provided that $k = 0$, since we do not consider perturbations in $u_0$.
	From \cite[Lemma~1.7]{Caja-LopezDelgadinoKitagawa2026} we find that
	the Brenier potential $\phi_0$ (which is unique up to constants) for the transport from $\bar u$ to $u_0$
	satisfies $\phi_0 \in C^{2}(\bar\omega)$ and $\nabla^2 \phi_0$ is uniformly positive definite on $\bar\omega$.
	Moreover, the transport map $\bar T \colon \omega_0 \to \bar\omega$ from $u_0$ to $\bar u$
	satisfies $\bar T = (\nabla\phi_0)^{-1}$.

	In order to comply with \cref{assu:varphireg},
	we use Whitney's extension theorem to extend $\phi_0$ to $\phi_0 \in C^2(\R^d)$,
	where the necessary remainder term estimates can be shown using the convexity of $\bar\omega$.
	Next, we choose a convex and compact set $\omega_1 \subset \R^d$ such that $\bar\omega \subset \interior(\omega_1)$
	and such that $\nabla^2\phi_0$ is still uniformly positive definite on $\omega_1$.
	We further define the function $\bar\varphi \colon \omega_1 \to \R$
	via
	$\bar\varphi(\eta) := \abs{\eta}^2 - 2\phi_0(\eta)$.
	Let us check that \eqref{eq:regularity} holds.
	For a fixed $x \in \omega_0$, the derivative of the map
	$\eta \mapsto c(x, \eta) - \bar\varphi(\eta) = \abs{x}^2 - 2\eta^\top x + 2 \phi_0(\eta)$ at $\eta \in \omega_1$
	is
	\(
		2( \nabla\phi_0(\eta) - x)
	\).
	Note that this vanishes for $\eta = \bar T(x) \in \bar\omega$.
	Together with the strong convexity of $\phi_0$, this shows that $\eta = \bar T(x)$
	is the unique minimizer of $c(x, \cdot) - \bar\varphi$
	and that the quadratic growth condition \eqref{eq:regularity} holds.
	This also shows that the function $\bar\varphi$ is a Kantorovich potential (no matter how the 
	extension on $\omega_1\setminus \bar\omega$ precisely looks like), since \eqref{eq:regularity} implies
	$\bar\varphi^{\overline{c}}(x) = c(x, \bar T(x)) - \bar\varphi(\bar T(x))$ for all $x\in \omega_0$ and thus
	\begin{align*}
	    \int_{\omega_1} \bar\varphi(\eta) \d\bar u(\eta)
	    & = \int_{\bar\omega} \bar\varphi(\eta) \d (\bar T \# u_0)(\eta)
	      = \int_{\omega_0} \bar\varphi(\bar T(x)) \d u_0(x)\\
	    &=  \int_{\omega_0} c(x, \bar T(x)) \d u_0(x) - 
	    \int_{\omega_0} \bar\varphi^{\overline{c}}(x) \d u_0(x)
	    = \DD(\eqclass{\bar u}{0}) + J(\bar\varphi)
	\end{align*}
	so that, according to \eqref{eq:Kantorovich_potential}, $\bar\varphi$ is indeed a Kantorovich potential.
	Hence, \cref{assu:regularity_assumptions_on_T} and, consequently, \cref{assu:varphireg} are verified.

	For the upcoming formulas, we mention
	$\nabla^2\phi_0 = \frac12 H$, see \eqref{eq:H_simple}.
	Now, we are in position to evaluate the formula from
	\cite[Theorem~1.4]{Caja-LopezDelgadinoKitagawa2026}.
	This shows that the second derivative of $\KK$ is given by
	\begin{equation}\label{eq:KKprimeprime}
	\begin{aligned}
	    \frac12 \KK''(0)
	    =
		\int_{\bar\omega} \nabla\xi^\top (\nabla^2\phi_0)^{-1} \nabla\xi \,\bar U \d\lambda^d 
		,
	\end{aligned}
	\end{equation}
	where $\xi$ is the weak solution of the following Neumann problem
	\begin{equation*}
		-\div\bracks*{
			\bar U (\nabla^2 \phi_0)^{-1} \nabla \xi
		}
		=
		- h\, \bar U
		\quad\text{in } \bar\omega, 
		\qquad
		n^\top (\nabla^2 \phi_0)^{-1} \nabla \xi = 0
		\quad\text{on } \partial\bar\omega
	\end{equation*}
	with the unit outer normal $n$.
	The weak formulation of this PDE reads
	\begin{equation}\label{eq:neumann}
		\int_{\bar\omega} \nabla \varphi^\top (\nabla^2 \phi_0)^{-1} \nabla\xi \,\bar U \d\lambda^d
		= - \int_{\bar\omega} \varphi h\, \bar U \d\lambda^d
		\quad \forall\, \varphi \in H^1(\interior(\bar\omega)).
	\end{equation}
    Note that the uniform positive definiteness of $\nabla^2 \phi_0$ 
    and the uniform positivity of $\bar U$ imply that 
		this PDE admits a solution $\xi \in H^1(\interior(\bar\omega))$, which is unique up to a constant.

    In the following, we show that the above expression for $\frac12 \KK''(0)$ 
    equals the infimum on the right hand side of \eqref{eq:weakstarsecondsubderivD}, 
    if we set $\Phi := \eqclass{h \bar U \lambda^d}{0}$. 
    On the space $L^2(\bar u; \R^d)$, we define the equivalent inner product
    \begin{equation*}
        (v, w)_H := \int_{\bar\omega} v^\top \nabla^2\phi_0 \, w \,\bar U \d\lambda^d 
        .
    \end{equation*}
    Moreover, we define the closed subspace $\UU \subset L^2(\bar u; \R^d)$ by 
    \begin{equation*}
        \UU = \set*{ (\nabla^2\phi_0)^{-1}\nabla \zeta \given \zeta \in H^1(\interior(\bar\omega))}
    \end{equation*}        
    and employ the decomposition $L^2(\bar u; \R^d) = \UU \oplus \UU^\perp$, where $\UU^\perp$ is the orthogonal 
    complement of $\UU$ w.r.t.\ the $H$-inner product.
    Next,
    we abbreviate the feasible set of the minimization problem in \eqref{eq:weakstarsecondsubderivD} by
    \begin{equation*}
        R
        :=
        \set{
            \rho \in L^2(\bar u; \R^d)
            \given
            \Phi = \eqclass{0}{\rho \bar u}
        }.
    \end{equation*}
    Together with $\Phi = \eqclass{h \bar U \lambda^d}{0}$,
    we find that $\rho \in R$ is equivalent to
    \begin{equation*}
        \int_{\omega_1} \varphi \, h\, \bar U \d \lambda^d 
        =
        \dual{\Phi}{\varphi}
        =
        \dual{\eqclass{0}{\rho\bar u}}{\varphi}
        =
        \int_{\omega_1} \rho^\top \nabla \varphi \, \bar U \d\lambda^d
        \qquad\forall \varphi \in C^1(\omega_1).
    \end{equation*}
    Here, all integrals can be changed to integrals over $\bar\omega$, since $\bar U = 0$ on $\omega_1 \setminus \bar\omega$.
    Now, we employ the above orthogonal decomposition, i.e.,
    every $\rho \in L^2(\bar u; \R^d)$ can be written as
    $\rho = -(\nabla^2\phi_0)^{-1} \nabla \zeta + w$
    with
    $\zeta \in H^1(\interior(\bar\omega))$ and $w \in \UU^\perp$.
    Consequently, $\rho \in R$ is equivalent to
    \begin{align*}
        \int_{\bar\omega} \varphi \, h\, \bar U \d \lambda^d 
        &=
        -\int_{\bar\omega} \nabla\zeta^\top (\nabla^2\phi_0)^{-1} \nabla \varphi \, \bar U \d\lambda^d
        +
        \int_{\bar\omega} w^\top \nabla \varphi \, \bar U \d\lambda^d
        \\&
        =
        -\int_{\bar\omega} \nabla\zeta^\top (\nabla^2\phi_0)^{-1} \nabla \varphi \, \bar U \d\lambda^d
        +
        (w, (\nabla^2 \phi_0)^{-1}\nabla \varphi)_H
    \end{align*}
    for all $\varphi \in C^1(\omega_1)$.
    Due to $w \in \UU^\perp$, the last inner product vanishes.
    By density, the last equality is also satisfied for all
    $\varphi \in H^1(\interior(\bar\omega))$
    and we arrive at the weak formulation \eqref{eq:neumann}.
    Since the latter has a unique solution (up to constants),
    this shows
    \begin{equation*}
        R
        =
        \set{
            -(\nabla^2 \phi_0)^{-1}\nabla\xi + w
            \given
            w \in \UU^\perp
        }
        =
        -(\nabla^2 \phi_0)^{-1}\nabla\xi
        +
        \UU^\perp
        ,
    \end{equation*}
    where $\xi \in H^1(\interior(\bar\omega))$ is an arbitrary, fixed solution of \eqref{eq:neumann}.
    In particular, this shows that the set $R$ is not empty.
    Now,
    using the $H$-inner product
    and $\frac12 H = \nabla^2\phi_0$,
    the minimization problem in \eqref{eq:weakstarsecondsubderivD} becomes
    \begin{equation*}
        \inf \set*{
            (\rho, \rho)_H
            \given
            \rho \in R
        }
        .
    \end{equation*}
    Due to the above representation of $R$,
    which is orthogonal w.r.t.\ the $H$-inner product,
    it is immediate that the unique solution of this problem is given by
    $\rho = -(\nabla^2 \phi_0)^{-1}\nabla\xi$
    and the infimum is precisely
    \begin{equation*}
        \int_{\bar\omega} \nabla \xi^\top (\nabla^2 \phi_0)^{-1} \nabla \xi\, \bar U \d \lambda^d
        ,
    \end{equation*}
    which equals the value of $\frac12 \KK''(0)$ from \eqref{eq:KKprimeprime}.
    
    To summarize, we have shown that
    \begin{equation*}
        \wdist''(\bar u, \bar\varphi; \eqclass{h \bar U \lambda^d}{0} )
        =
        \KK''(0)
        =
        \frac{\d^2}{\d t^2}
        D(\bar u + t h \bar U \lambda^d) |_{t = 0}
        .
    \end{equation*}
    We further observe that the results of \cite[Theorem~1.4]{Caja-LopezDelgadinoKitagawa2026} 
    and \cref{thm:second_subdiff_W22_epidiff} complement each other.
    The data in \cite[Theorem~1.4]{Caja-LopezDelgadinoKitagawa2026} is supposed to be
    much smoother compared to our setting.
    This allows to prove that $u \mapsto D(u)$
    is twice directionally differentiable at $\bar u$ w.r.t.\ regular directions.
    On the contrary,
    we are able to characterize the weak-$\star$ second subderivative,
    which, ultimately, gives rise to no-gap second-order conditions.
\end{remark}

\section{Second-order conditions}
\label{sec:second_order_condition}
In this section, we want to apply the findings of the previous two section
to study our (lifted) optimization problem \eqref{P'}.
By adapting the abstract assumption from \cref{thm:abstract_second_order_result}
to our situation,
we state the following differentiability assumption on $\FF$.

\begin{assumption}
	\label{assu:F_second_order_Taylor}
	There exist $\FF'(\bar u) \in C^1(\omega_1)$ and
	a bounded bilinear form
	$\FF''(\bar u) \colon C^1(\omega_1)\dualspace \times C^1(\omega_1)\dualspace \to \R$
	with
	\begin{equation}\label{eq:taylor2}
		\lim_{k \to \infty}
		\frac
		{\FF(\bar u + t_k h_k) - \FF(\bar u) - t_k \FF'(\bar u) h_k - \frac 1 2 t_k^2 \FF''(\bar u) h_k^2}
		{t_k^2}
		=
		0
	\end{equation}
	for all sequences
	$\seq{t_k} \subset \R^+$, $\seq{h_k} \subset C^1(\omega_1)\dualspace$
	satisfying
	$t_k \to 0, h_k \weaklystar h \in C^1(\omega_1)\dualspace$ and
	$\bar u + t_k h_k \in \dom(\wdist)$.
	Additionally, the map
	$C^1(\omega_1)\dualspace \ni h \mapsto \FF''(\bar u) h^2$ is sequentially weak-$\star$ continuous.
\end{assumption}
Under \cref{assu:F_second_order_Taylor},
we obtain
\begin{equation*}
	-\FF'(\bar u) \in \frac\alpha2 \partial \wdist(\bar u)
\end{equation*}
as a first-order optimality condition for $\bar u$
by standard arguments.
In particular,
$-\FF'(\bar u) \in C^1(\omega_1)$
is a Kantorovich potential for $\bar u$.

Now we are in position to prove
our first second-order result.
\begin{theorem}
	\label{thm:SNC}
	We assume that \cref{assu:F_second_order_Taylor} holds
	and that \cref{assu:weaker_regularity_assumptions_on_T} is satisfied
	for $\bar\varphi = -\frac{2}{\alpha}\FF'(\bar u)$.
	Then,
	\begin{equation}
		\label{eq:Phi''_positive}
		\FF''(\bar u) h^2 + \frac \alpha 2 \wdist''(\bar u,\bar\varphi;h) > 0
		\quad
		\forall h \in C^1(\omega_1)\dualspace \setminus \set{0}
	\end{equation}
	is equivalent to the existence of $\kappa,\varepsilon > 0$ such that the quadratic growth condition
	\begin{equation}
		\label{eq:quadratic_growth}
		F(u) + \frac\alpha2 D(u)
		\ge
		F(\bar u) + \frac\alpha2 D(\bar u) + \frac \kappa 2 \norm{u - \bar u}_{\BL}^2
		\quad
		\forall u \in \MM(\omega_1),
		\norm{u - \bar u}_{\BL} \le \varepsilon
	\end{equation}
	holds.
\end{theorem}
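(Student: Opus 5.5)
We apply \cref{thm:abstract_second_order_result} to the lifted problem \eqref{P'}. The setting is $Y = C^1(\omega_1)$, which is separable, $X = C^1(\omega_1)\dualspace$, $F = \FF$, $G = \frac\alpha2\wdist$ and $\bar x = \bar u = \eqclass{\bar u}{0}$. \Cref{assu:F_second_order_Taylor} is exactly the twice differentiability \eqref{eq:abstract_differentiable_F} together with the sequential weak-$\star$ continuity of $h\mapsto \FF''(\bar u)h^2$. The weak-$\star$ second subderivative scales as $G''(\bar u,-\FF'(\bar u);h) = \frac\alpha2 \wdist''(\bar u,\bar\varphi;h)$, because $-\FF'(\bar u) = \frac\alpha2\bar\varphi$ and the difference quotient in \cref{def:wsss} is linear in $(G,w)$. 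The second-order sufficient condition of the abstract theorem therefore becomes \eqref{eq:Phi''_positive}. The abstract theorem then gives: \eqref{NDC} together with \eqref{eq:Phi''_positive} is equivalent to quadratic growth \eqref{eq:abstract_QG} in the norm of $C^1(\omega_1)\dualspace$.

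Next we verify \eqref{NDC} unconditionally, so that it drops out of the equivalence. By \cref{thm:NDC_D_inequality}, applied to $\bar\varphi = -\frac2\alpha\FF'(\bar u)$, we have
\[
\tfrac\alpha2\wdist(u) - \tfrac\alpha2\wdist(\bar u) + \dual{\FF'(\bar u)}{u-\bar u} \ge \tfrac{\alpha}{4\tilde C}\norm{u-\bar u}_{C^1(\omega_1)\dualspace}^2 \qquad \forall u.
\]
The linear term matches because $\frac\alpha2\dual{u-\bar u}{\bar\varphi} = -\dual{u-\bar u}{\FF'(\bar u)}$. \Cref{lem:ndc_sufficient} then yields \eqref{NDC}; its lower semicontinuity hypothesis follows from the weak-$\star$ continuity of $h\mapsto\FF''(\bar u)h^2$. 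Consequently \eqref{eq:Phi''_positive} is equivalent to \eqref{eq:abstract_QG} on a $C^1(\omega_1)\dualspace$-ball.

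It remains to translate \eqref{eq:abstract_QG} into \eqref{eq:quadratic_growth}. Outside $\ran(\Pi)$ the objective $\FF + \frac\alpha2\wdist$ equals $+\infty$, so \eqref{eq:abstract_QG} holds there trivially. On $\ran(\Pi)$ we have $\FF(\Pi u) = F(u)$ and $\wdist(\Pi u) = D(u)$. By \cref{lem:BLnorm_C1*norm_equivalent} there are constants $0<c_1\le c_2$ with
\[
c_1\norm{u-\bar u}_\BL \le \norm{\eqclass{u-\bar u}{0}}_{C^1(\omega_1)\dualspace} \le c_2\norm{u-\bar u}_\BL .
\]
Balls and the growth constant transfer in both directions once $\varepsilon$ and $\kappa$ are rescaled by these constants. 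This proves both implications.

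The main obstacle is bookkeeping rather than analysis: the scaling factor $\frac\alpha2$ and the sign convention in the subderivative direction $w = -\FF'(\bar u)$ versus $\bar\varphi$. We also need the fact that \cref{assu:weaker_regularity_assumptions_on_T} refers to a Kantorovich potential. This holds because the first-order condition makes $\bar\varphi$ a Kantorovich potential of $\bar u$, as noted before the theorem.
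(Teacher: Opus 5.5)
Your proposal is correct and follows the paper's proof: apply \cref{thm:abstract_second_order_result} in $X = C^1(\omega_1)\dualspace$, obtain \eqref{NDC} from \cref{thm:NDC_D_inequality} and \cref{lem:ndc_sufficient}, and transfer the growth condition to measures using $\wdist = \infty$ off $\ran(\Pi)$ together with \cref{lem:BLnorm_C1*norm_equivalent}; your handling of the factor $\frac\alpha2$ and of $w = -\FF'(\bar u) = \frac\alpha2\bar\varphi$ is right. One minor point: that $\bar\varphi$ is a Kantorovich potential of $\bar u$ is already part of the hypothesis, since \cref{assu:weaker_regularity_assumptions_on_T} is assumed for this $\bar\varphi$, so you should not justify it via the first-order condition, which presupposes local optimality of $\bar u$ and is therefore not available a priori in the sufficiency direction.
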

\begin{proof}
	We are going to apply \cref{thm:abstract_second_order_result}.
	We note that \eqref{NDC} follows from \Cref{thm:NDC_D_inequality} and \Cref{lem:ndc_sufficient}.
	Consequently, \cref{thm:abstract_second_order_result}
	implies that \eqref{eq:Phi''_positive} is equivalent to
	the existence of $\kappa', \varepsilon' > 0$ with
	\begin{equation*}
		\FF(\dualelement) + \frac\alpha2 \wdist(\dualelement)
		\ge
		\FF(\bar u) + \frac\alpha2 \wdist(\bar u) + \frac {\kappa'} 2 \norm{\dualelement - \bar u}_{C^1(\omega_1)\dualspace}^2
	\end{equation*}
	for all
	$\dualelement \in C^1(\omega_1)\dualspace$
	with
	$\norm{\dualelement - \bar u}_{C^1(\omega_1)\dualspace} \le \varepsilon'$.
	By definition of the lifted functionals,
	$\DD(\dualelement) = \infty$ if $\dualelement$ does not belong to the range $\Pi$,
	i.e., it cannot be represented by a measure.
	Together with the equivalence of
	$\norm{\eqclass{\cdot}{0}}_{C^1(\omega_1)\dualspace}$
	and
	$\norm{\cdot}_{\BL}$
	on $\MM(\omega_1)$,
	see \cref{lem:BLnorm_C1*norm_equivalent},
	we see that the quadratic growth condition in $C^1(\omega_1)\dualspace$
	is equivalent to \eqref{eq:quadratic_growth}.
\end{proof}
At this point it is interesting to note
that \cref{thm:SNC} can be proved
without having an explicit formula for the weak-$\star$ second subderivative 
and, therefore, the weaker \cref{assu:weaker_regularity_assumptions_on_T} 
suffices at this point, cf.\ \cref{rem:strongerassu}.
The precise computation of the weak-$\star$ second subderivative 
requires stronger assumptions, see \cref{sec:second_subderivative}.
Under these stronger assumptions, we can reformulate condition \eqref{eq:Phi''_positive}.

\begin{lemma}
	\label{lem:equivalence_of_equivalence_classes_second_order}
	We assume that \cref{assu:F_second_order_Taylor} holds
	and that \cref{assu:varphireg} is satisfied for $\bar\varphi = -\FF'(\bar u)$.
	Then, the conditions
	\begin{subequations}
		\begin{align}
			\label{eq:Phi''_positive_again}
			&\FF''(\bar u) h^2 + \frac \alpha 2 \wdist''(\bar u,\bar\varphi;h) > 0
			&&
			\forall h \in C^1(\omega_1)\dualspace \setminus \set{0}
			\\
			\label{lem:equivalence_of_equivalence_classes_second_order:i}
			&\FF''(\bar u) \eqclass{0}{\nu}^2 + \frac \alpha 2 \wdist''(\bar u,\bar\varphi;\eqclass{0}{\nu})
			>
			0
			&&
			\forall
			\nu \in \MM(\omega_1; \R^d) \setminus \set{0}
			\\
			\label{lem:equivalence_of_equivalence_classes_second_order:ii}
			&\FF''(\bar u) \eqclass{0}{\tilde \nu}^2 + \alpha g\dualspace(\tilde \nu)
			>
			0
			&&
			\forall
			\tilde \nu \in \MM(\omega_1; \R^d) \setminus \set{0}
			\\
			\label{lem:equivalence_of_equivalence_classes_second_order:iii}
			&\FF''(\bar u) \eqclass{0}{\rho \bar u}^2
			+
			\frac \alpha 2 \int_{\omega_1} \rho^\top H \rho \d\bar u
			>
			0
			&&
			\forall
			\rho \in L^2(\bar u; \R^d) \setminus \set{0}
		\end{align}
	\end{subequations}
	are equivalent,
	where we use $g\dualspace$ from \eqref{eq:g*}.
\end{lemma}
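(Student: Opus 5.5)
The plan is to prove the chain of equivalences by rewriting, at each step, the term involving $\wdist''$ with the formula of \cref{thm:second_subdiff_W22_epidiff}. Recall that it gives $\frac12\wdist''(\bar u,\bar\varphi;\dualelement) = \inf\{\frac12\int\rho^\top H\rho \d\bar u \mid \dualelement = \eqclass{0}{\rho\bar u}\}$. By the proof of \cref{lem:lower_bound_subderivative}, this infimum also equals $\inf\{g\dualspace(\nu) \mid \eqclass{0}{\nu} = \dualelement\}$, with attainment whenever the feasible set is nonempty. In particular, $\wdist''(\bar u,\bar\varphi;h) = \infty$ unless $h = \eqclass{0}{\rho\bar u}$ for some $\rho \in L^2(\bar u;\R^d)$. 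Since $\FF''(\bar u)$ is a bounded bilinear form, every such $h$ satisfies \eqref{eq:Phi''_positive_again} automatically. So \eqref{eq:Phi''_positive_again} reduces to directions of the form $h = \eqclass{0}{\nu}$, and then to $h = \eqclass{0}{\rho\bar u}$.

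\eqref{eq:Phi''_positive_again}$\Leftrightarrow$\eqref{lem:equivalence_of_equivalence_classes_second_order:i}. Forward: let $\nu \neq 0$. If $\eqclass{0}{\nu} \neq 0$, apply \eqref{eq:Phi''_positive_again} with $h = \eqclass{0}{\nu}$. If $\eqclass{0}{\nu} = 0$, then $\FF''(\bar u)0^2 = 0$, and $\wdist''(\bar u,\bar\varphi;0)$ is $0$ (take $\rho = 0$), so the inequality fails. This case needs care. I will handle it by checking that the intended reading of \eqref{lem:equivalence_of_equivalence_classes_second_order:i} is over $\nu$ with $\eqclass{0}{\nu} \neq 0$, or equivalently that $\eqclass{0}{\nu} = 0$ is excluded. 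More cleanly, I will show (iii)$\Leftrightarrow$(ii)$\Leftrightarrow$(i) with the same reading and treat nonzero elements of the quotient throughout. I expect this bookkeeping to be the main subtlety; if the statement is meant literally, I would restrict $\nu$ (respectively $\rho$) to those with $\eqclass{0}{\nu} \neq 0$. Backward: every $h$ with finite second subderivative is of the form $\eqclass{0}{\nu}$, and the remaining $h$ are trivial as noted above.

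\eqref{lem:equivalence_of_equivalence_classes_second_order:i}$\Leftrightarrow$\eqref{lem:equivalence_of_equivalence_classes_second_order:ii}. Since $\frac12\wdist''(\bar u,\bar\varphi;\eqclass{0}{\nu}) = \min\{g\dualspace(\tilde\nu) \mid \eqclass{0}{\tilde\nu} = \eqclass{0}{\nu}\} \le g\dualspace(\nu)$, condition (i) for $\nu$ implies (ii) for every $\tilde\nu$ in the same class, because $\FF''$ depends only on the class. Conversely, fix $\nu$ and pick the minimizer $\tilde\nu$ (it exists when the value is finite; the infinite case is trivial). Then (ii) for $\tilde\nu$ gives (i) for $\nu$.

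\eqref{lem:equivalence_of_equivalence_classes_second_order:ii}$\Leftrightarrow$\eqref{lem:equivalence_of_equivalence_classes_second_order:iii}. By \eqref{eq:g*}, $g\dualspace(\tilde\nu) < \infty$ exactly when $\tilde\nu = \rho\bar u$ with $\rho \in L^1(\bar u;\R^d)$. In that case $g\dualspace(\tilde\nu) = \frac12\int\rho^\top H\rho\d\bar u$, and the uniform ellipticity of $H$ from \cref{thm:J''} forces $\rho \in L^2(\bar u;\R^d)$. Moreover, $\tilde\nu \neq 0$ if and only if $\rho \neq 0$ in $L^2(\bar u)$. Hence (ii) restricted to finite values is literally (iii), and (ii) holds trivially when $g\dualspace(\tilde\nu) = \infty$.
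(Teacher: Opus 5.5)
Your proposal follows the paper's proof essentially step by step: \eqref{eq:Phi''_positive_again} reduces to directions $\eqclass{0}{\nu}$ because $\wdist''(\bar u,\bar\varphi;h)=\infty$ otherwise; \cref{thm:second_subdiff_W22_epidiff} together with \eqref{eq:g*} gives $\frac12\wdist''(\bar u,\bar\varphi;\eqclass{0}{\nu})=\inf\{g\dualspace(\tilde\nu) : \eqclass{0}{\tilde\nu}=\eqclass{0}{\nu}\}$, and attainment of this infimum when it is finite is what is needed to pass from \eqref{lem:equivalence_of_equivalence_classes_second_order:ii} back to \eqref{lem:equivalence_of_equivalence_classes_second_order:i}; finally, \eqref{lem:equivalence_of_equivalence_classes_second_order:iii} is \eqref{lem:equivalence_of_equivalence_classes_second_order:ii} rewritten in densities. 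Your caveat is justified and is a point the paper's proof passes over: for $d\ge2$ there are $\nu\neq0$ with $\eqclass{0}{\nu}=0$ (e.g.\ $\nu=(\partial_2\chi,-\partial_1\chi,0,\ldots,0)\lambda^d$ with $0\neq\chi\in C_c^\infty$ supported in the interior of $\omega_1$), for which the left-hand side of \eqref{lem:equivalence_of_equivalence_classes_second_order:i} equals $0$, so \eqref{lem:equivalence_of_equivalence_classes_second_order:i} must be read over $\nu$ with $\eqclass{0}{\nu}\neq0$; by contrast, \eqref{lem:equivalence_of_equivalence_classes_second_order:ii} and \eqref{lem:equivalence_of_equivalence_classes_second_order:iii} need no restriction, because $g\dualspace(\tilde\nu)>0$ and $\int_{\omega_1}\rho^\top H\rho\,\mathrm{d}\bar u>0$ for nonzero arguments by the lower eigenvalue bound on $H$ from \cref{thm:J''}, and this one-line observation is what closes your chain if those two conditions are taken literally.
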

\begin{proof}
	The equivalence of \eqref{eq:Phi''_positive_again}
	and \eqref{lem:equivalence_of_equivalence_classes_second_order:i}
	follows
	since $\wdist''(\bar u, \bar\varphi; h) = \infty$
	if $h$ cannot be represented as $\eqclass{0}{\nu}$, cf.\ \cref{thm:second_subdiff_W22_epidiff}.
	Similarly, \cref{thm:second_subdiff_W22_epidiff}
	together with \eqref{eq:g*} yields
	\begin{equation*}
		\frac12 \wdist''(\bar u, \bar\varphi; \eqclass{0}{\nu})
		=
		\inf\set{ g\dualspace(\tilde\nu) \given \eqclass{0}{\nu} = \eqclass{0}{\tilde\nu} }
		.
	\end{equation*}
	Thus,
	\eqref{lem:equivalence_of_equivalence_classes_second_order:i}
	implies
	\eqref{lem:equivalence_of_equivalence_classes_second_order:ii}.
	For the converse direction, we need that
	the infimum in the previous formula is attained (whenever it is not $\infty$).
	From this, we infer that
	\eqref{lem:equivalence_of_equivalence_classes_second_order:ii}
	implies
	\eqref{lem:equivalence_of_equivalence_classes_second_order:i}.
	Finally,
	\eqref{lem:equivalence_of_equivalence_classes_second_order:iii}
	is just a reformulation of
	\eqref{lem:equivalence_of_equivalence_classes_second_order:ii}
	using the densities.
\end{proof}

Finally,
our results allow to apply 
\cite[Theorem~6.11]{WachsmuthWalter2025:1}
to obtain a strong metric subregularity result.

\begin{theorem}
	\label{thm:SMS}
	We assume that the cost function is given by
	$c(x,\eta) := \abs{x - \eta}^p$ with $p \in [2,\infty)$.
	We assume that
	mappings $\FF' \colon C^1(\omega_1)\dualspace \to C^1(\omega_1)$
	and $\FF''(\bar u) \colon C^1(\omega_1)\dualspace \to C^1(\omega_1)$
	are given
	such that
	\[
		\frac{\FF'(\bar u + t_n h_n) - \FF'(\bar u)}{t_n}
		\to
		\FF''(\bar u) h
		\qquad
		\text{in } C^1(\omega_1)
	\]
	holds
	for all sequences $\seq{t_n} \subset \R^+$, $\seq{h_n} \subset C^1(\omega_1)\dualspace$ with
	$t_n \to 0$, $h_n \weaklystar h$ and $\bar u + t_n h_n \in \dom(\wdist)$.
	Finally,
	we assume that \cref{assu:varphireg}
	is satisfied for $\bar\varphi = -\FF'(\bar u)$.

	Then, the second-order condition
	\eqref{eq:Phi''_positive}
	with $\FF''(\bar u) h^2 = \dual{h}{\FF''(\bar u) h}$
	implies the existence of
	$\nu, R > 0$ such that
	\begin{equation*}
		\norm{u - \bar u}_{\BL}
		\le
		\frac 1 \nu
		\inf\set*{
			\norm*{\FF'(u) + \frac\alpha2 \varphi}_{C^1(\omega_1)}
			\given
			\varphi \in \partial D(u) \cap C^1(\omega_1)
		}
	\end{equation*}
	for all $u \in \MM(\omega_1)$ with $\norm{u - \bar u}_{\BL} \le R$.
\end{theorem}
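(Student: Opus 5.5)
The plan is to verify the hypotheses of \cite[Theorem~6.11]{WachsmuthWalter2025:1} in the setting $X = C^1(\omega_1)\dualspace$, $Y = C^1(\omega_1)$, with smooth part $\FF$ and nonsmooth part $G = \frac\alpha2\wdist$, and then pass back from $C^1(\omega_1)\dualspace$ to measures as in the proof of \cref{thm:SNC}. That abstract result is a strong metric subregularity statement. Its assumptions are, as far as we use them, the following:
\begin{itemize}
\item[(a)] the first-order condition $-\FF'(\bar u) \in \partial G(\bar u)$;
\item[(b)] a differentiability property of $\FF'$ along weak-$\star$ convergent directions with values converging in the predual $Y$;
\item[(c)] \eqref{NDC} or the ascent inequality of \cref{thm:NDC_D_inequality};
\item[(d)] the second-order condition \eqref{eq:Phi''_positive};
\item[(e)] a structural property of $\partial G$ linking nearby subgradients to the second subderivative, typically weak-$\star$ twice epi-differentiability or a convexity/monotonicity requirement.
\end{itemize}

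First, (a) is the first-order optimality condition stated after \cref{assu:F_second_order_Taylor}, and (b) is exactly the hypothesis on $\FF'$ and $\FF''(\bar u)$ in the theorem. For (c), \cref{assu:varphireg} contains \cref{assu:regularity_assumptions_on_T}, which implies \cref{assu:weaker_regularity_assumptions_on_T}. Hence \cref{thm:NDC_D_inequality} gives quadratic growth of $\wdist$ in the $C^1(\omega_1)\dualspace$ norm, and \cref{lem:ndc_sufficient} yields \eqref{NDC}. Here the weak-$\star$ continuity of $h\mapsto\dual{h}{\FF''(\bar u)h}$ follows because $\FF''(\bar u)$ maps into $C^1(\omega_1)$ and is weak-$\star$-to-norm sequentially continuous by the assumed difference-quotient convergence. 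For (e), $\wdist = J\conjugate$ is convex by \cref{thm:J*_is_distance}, and \cref{thm:second_subdiff_W22_epidiff} provides weak-$\star$ twice epi-differentiability at $\bar u$ for $\bar\varphi$, together with the formula for $\wdist''$.

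The role of $p \in [2,\infty)$ has to be identified. It makes $c$ twice continuously differentiable, which \cref{assu:varphireg} requires. Moreover, for $u_0$ absolutely continuous, \cref{lem:subdiff_J} gives single-valuedness of $\partial J$, which is the kind of structure one needs when relating $\partial\wdist(u)$ to $\partial D(u)$. We must also check that the subdifferential in the abstract result, namely $\partial(\frac\alpha2\wdist)(\Pi u)\cap C^1(\omega_1) = \frac\alpha2(\partial D(u)\cap C^1(\omega_1))$, matches the set in the infimum. This holds because, for $\varphi \in C^1(\omega_1)$, the subgradient inequality for $\wdist$ at $\Pi u$ is trivial off $\ran(\Pi)$ and coincides with that of $D$ on $\ran(\Pi)$. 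Writing $\frac\alpha2\varphi$ for the subgradient then gives the residual $\norm{\FF'(u)+\frac\alpha2\varphi}_{C^1(\omega_1)}$.

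Applying \cite[Theorem~6.11]{WachsmuthWalter2025:1} yields $\nu',R'$ with $\norm{\Pi u - \bar u}_{C^1(\omega_1)\dualspace} \le \frac{1}{\nu'}\operatorname{dist}(0, \FF'(\Pi u) + \partial G(\Pi u))$. Points outside $\ran(\Pi)$ have empty subdifferential and are irrelevant. Finally, \cref{lem:BLnorm_C1*norm_equivalent} converts between the $C^1(\omega_1)\dualspace$ norm and $\norm{\cdot}_{\BL}$ on measures, adjusting $\nu$ and $R$.

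The main obstacle I expect is (e): matching the precise form of the extra hypothesis in \cite[Theorem~6.11]{WachsmuthWalter2025:1} with what we have proved. That result may require strict, rather than weak-$\star$, twice epi-differentiability. If it does, I would rely on the fact that convexity of $\wdist$ plus \eqref{NDC} still yields the required liminf estimate along sequences of subgradients, and I would use the restriction $p\ge2$ through \cref{lem:subdiff_J} to control those subgradient sequences.
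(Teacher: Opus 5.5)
You follow the same route as the paper: verify the hypotheses of \cite[Theorem~6.11]{WachsmuthWalter2025:1} in $X = C^1(\omega_1)\dualspace$, then pass to $\norm{\cdot}_{\BL}$ via \cref{lem:BLnorm_C1*norm_equivalent}. However, the step you flag as (e) is where the real work lies, and neither of your proposed ways of handling it succeeds.

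As the paper's proof shows, apart from differentiability and \eqref{eq:Phi''_positive}, the abstract result needs two conditions on \emph{arbitrary} sequences of subgradient pairs. Take $t_n \to 0$, $h_n \weaklystar h$ and $\psi_n \to \psi$ in $C^1(\omega_1)$ with $\bar\varphi + t_n\psi_n \in \partial\wdist(\bar u + t_n h_n)$. Condition (iii) requires $\dual{h}{\psi} \ge \wdist''(\bar u,\bar\varphi;h)$. Condition (ii) requires that no such sequences exist with $h_n \weaklystar 0$, $\norm{h_n}_{C^1(\omega_1)\dualspace} = 1$ and $\psi_n \to 0$.

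Weak-$\star$ twice epi-differentiability (\cref{thm:second_subdiff_W22_epidiff}) only says that \emph{some} recovery sequence exists for each $h$. It says nothing about a prescribed subgradient sequence. Convexity alone is also too weak for (iii). Testing the subgradient inequality at $\bar u + t_n h_n$ with $\bar u$ only gives $\wdist''(\bar u,\bar\varphi;h) \le 2\dual{h}{\psi}$, which is off by a factor $2$; already for a quadratic $G$ the correct value is $\dual{h}{\psi}$. You also place the $p$-cost in the wrong step. The identification $\partial(\tfrac\alpha2\wdist)(\Pi u)\cap C^1(\omega_1) = \tfrac\alpha2(\partial D(u)\cap C^1(\omega_1))$ needs no uniqueness of subgradients.

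The missing idea is to \emph{identify} $h_n$. The paper uses single-valuedness of $\partial J$ from \cref{lem:subdiff_J}; this is where $c = \abs{x-\eta}^p$ enters, and strictly it also needs $u_0$ absolutely continuous, as you noticed. This gives $\bar u + t_n h_n = T(\bar\varphi + t_n\psi_n)\#u_0$, i.e.\ $h_n = \psi_n\dualspace$ as in \eqref{eq:psi_k^*}.
\begin{itemize}
\item For (iii): \cref{lem:recovery} then yields $h = \psi\dualspace = \eqclass{0}{(H^{-1}\nabla\psi)\bar u}$. Formula \eqref{eq:Formel_4_20} gives $\wdist''(\bar u,\bar\varphi;h) = \int_{\omega_1}\nabla\psi^\top H^{-1}\nabla\psi\d\bar u = \dual{h}{\psi}$.
\item For (ii): \eqref{eq:replacement_for_norm_convergence} with $\psi = 0$ forces $\norm{h_n}_{C^1(\omega_1)\dualspace}\to 0$, a contradiction.
\end{itemize}

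Your fallback does work for (ii), even without uniqueness. Add the ascent inequality of \cref{thm:NDC_D_inequality} to the subgradient inequality at $\bar u + t_n h_n$. This gives $\frac{1}{2\tilde C}\norm{h_n}_{C^1(\omega_1)\dualspace}^2 \le \dual{h_n}{\psi_n} \le \norm{h_n}_{C^1(\omega_1)\dualspace}\norm{\psi_n}_{C^1(\omega_1)}$.

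For (iii), here is a repair that also avoids uniqueness. Run the Fenchel--Young computation from the proof of \cref{thm:Wpp_on_subspace} with $(h_n,\psi_n)$ in place of $(\psi_k\dualspace,\psi)$. By \cref{thm:J''} this gives $\frac12\wdist''(\bar u,\bar\varphi;h)\le\dual{h}{\psi}-\frac12 J''(\bar\varphi;\psi)$. Together with \cref{lem:lower_bound_subderivative}, this forces equality in the Fenchel--Young inequality for $\frac12 J''(\bar\varphi;\cdot)$. Hence $h = \psi\dualspace$ by \cref{lem:Subdiff_Jpp_third_component}, and $\wdist''(\bar u,\bar\varphi;h) = J''(\bar\varphi;\psi) = \dual{h}{\psi}$.

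Some identification argument of this kind is indispensable, and it is absent from your proposal.
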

\begin{proof}
	We verify that the four properties from \cite[Theorem~6.11]{WachsmuthWalter2025:1}
	hold.

	The differentiability property
	\cite[Theorem~6.11(i)]{WachsmuthWalter2025:1}
	holds by assumption.

	The second-order condition
	\cite[Theorem~6.11(iv)]{WachsmuthWalter2025:1}
	is precisely \eqref{eq:Phi''_positive}.
	
	For
	\cite[Theorem~6.11(iii)]{WachsmuthWalter2025:1}
	we have to show that for all
	$\seq{t_n} \subset \R^+$,
	$\seq{h_n} \subset C^1(\omega_1)\dualspace$,
	$\seq{\psi_n} \subset C^1(\omega_1)$
	with
	$t_n \to 0$,
	$h_n \weaklystar h$ in $C^1(\omega_1)\dualspace$,
	$\psi_n \to \psi$ in $C^1(\omega_1)$
	and
	$\bar\varphi + t_n \psi_n \in \partial\wdist(\bar u + t_n h_n)$
	we have
	\begin{equation*}
		\dual{h}{\psi}
		\ge
		\wdist''(\bar u, \bar\varphi; h)
		.
	\end{equation*}
	Due to the assumption on the cost functional,
	\cref{lem:subdiff_J} implies the uniqueness of the subdifferential
	and, thus,
	$\bar u + t_n h_n = T(\bar\varphi + t_n \psi_n) \# u_0$.
	Consequently,
	$h_n = \psi_n\dualspace$ from \cref{lem:recovery}
	and we get
	$h_n = \psi_n\dualspace \weaklystar \psi\dualspace = \eqclass{0}{(H^{-1} \nabla \psi)\bar u}$,
	in particular, $h = \psi\dualspace$.
	Consequently,
	\begin{equation*}
		\dual{h}{\psi}
		=
		\dual{\psi\dualspace}{\psi}
		=
		\int_{\omega_1} \nabla\psi^\top H^{-1} \nabla\psi \d\bar u
		=
		\wdist(\bar u, \bar\varphi; h)
		,
	\end{equation*}
	where the last equality follows from \eqref{eq:Formel_4_20}
	and $h = \psi\dualspace$.

	For
	\cite[Theorem~6.11(ii)]{WachsmuthWalter2025:1}
	we have to show that
	the existence of sequences
	$\seq{t_n} \subset \R^+$,
	$\seq{h_n} \subset C^1(\omega_1)\dualspace$,
	$\seq{\psi_n} \subset C^1(\omega_1)$
	with
	$t_n \to 0$,
	$h_n \weaklystar 0$ in $C^1(\omega_1)\dualspace$,
	$\norm{h_n}_{C^1(\omega_1)\dualspace} = 1$,
	$\psi_n \to 0$ in $C^1(\omega_1)$
	and
	$\bar\varphi + t_n \psi_n \in \partial\wdist(\bar u + t_n h_n)$
	is impossible.
	Suppose that such sequences exist.
	By arguing as above, we find from \cref{lem:recovery}
	that $h_n = \psi_n\dualspace$
	and \eqref{eq:replacement_for_norm_convergence}
	implies
	$\norm{\psi_n\dualspace}_{C^1(\omega_1)\dualspace} \to 0$
	since $\psi = 0$ in our current situation.
	This is a contradiction.

	Consequently,
	\cite[Theorem~6.11]{WachsmuthWalter2025:1}
	implies the existence of $\nu', R' > 0$
	such that
	\begin{equation*}
		\norm{u - \bar u}_{C^1(\omega_1)\dualspace}
		\le
		\frac 1 {\nu'}
		\inf\set*{
			\norm*{\FF'(u) + \frac\alpha2 \varphi}_{C^1(\omega_1)}
			\given
			\varphi \in \partial \wdist(u) \cap C^1(\omega_1)
		}
	\end{equation*}
	holds for all $u \in C^1(\omega_1)\dualspace$
	with $\norm{u - \bar u}_{C^1(\omega_1)\dualspace} \le R'$.
	We can argue as in the proof of \cref{thm:SNC}
	to reduce this condition to measures.
	Indeed, $\partial \wdist(u) \ne \emptyset$
	implies that $u$ is represented by a measure
	and an application of \cref{lem:BLnorm_C1*norm_equivalent}
	yields the claim.
\end{proof}

We mention that for nice functionals $\FF$,
the bilinear form $\FF''(\bar u)$ from \cref{assu:F_second_order_Taylor}
essentially coincides with the operator $\FF''(\bar u)$
from \cref{thm:SMS}.

First, we want to show that the above can be applied to
a nonlinear optimal control problem governed by a linear equation
if the observation domain has a positive distance to the control domain.

\begin{example}
	\label{ex:nonlin_ctrl}
	Let $\Omega \subset \R^d$, $d = 2,3$, be a bounded Lipschitz domain
	such that the control domain satisfies $\omega_1 \subset \Omega$.
	For some fixed $q \in (1, d/(d-1))$,
	we want to consider the optimal control problem
	\begin{equation}
		\label{eq:nonlin_ctrl}
		\begin{aligned}
			\min \quad &\int_\Lambda a(x, y(x)) \d\lambda^d(x) + \frac\alpha2 D(u), \\
			\text{s.t.} \quad & y \in W^{1,q}_0(\Omega), \quad u \in \MM(\omega_1),\\
			& - \laplace y = u \quad \text{in } W^{-1,q}(\Omega).
		\end{aligned}
	\end{equation}
	Here, $\Lambda \in \BB(\Omega)$ is a given observation domain with positive Lebesgue measure
	and positive distance to the control domain $\omega_1$.
	The nonlinearity $a \colon \Lambda \times \R \to \R$ is assumed
	to satisfy the assumptions of
	\cite[Theorem~9]{GoldbergKampowskyTroeltzsch1992}
	such that the associated Nemytskii operator is $C^2$ on $L^\infty(\Lambda)$.
	As usual, $W^{1,q}_0(\Omega)$ denotes the closure of 
	$C^\infty_c(\Omega)$ w.r.t.\ the $W^{1,q}(\Omega)$-norm and $W^{-1,q}(\Omega)$ denotes the 
	dual of $W^{1,q'}_0(\Omega)$ where $1/q + 1/q' = 1$.

	According to \cite[Theorem~0.5]{JK95},
	we can choose the exponent $q$
	such that $-\laplace$
	is an isomorphism
	from $W_0^{1,q}(\Omega)$ to $W^{-1,q}(\Omega)$
	and
	from $W_0^{1,q'}(\Omega)$ to $W^{-1,q'}(\Omega)$.
	Consequently,
	the state equation has a unique solution $y \in W_0^{1,q}(\Omega)$
	for every $u \in \MM(\omega_1)$,
	see, e.g.,
	\cite[Remark~2.2, Lemma~3.1]{MeyerWachsmuth2025}.
	We want to show that the mapping $u \mapsto y_{|\Lambda}$
	is even continuous from $C^1(\omega_1)\dualspace$ to $L^\infty(\Lambda)$.
	To this end, we consider $u \in \MM(\omega_1)$ together with the associated solution
	$y \in W_0^{1,q}(\Omega)$.

	We prepare a duality argument.
	For some $w \in L^1(\Lambda)$, let $v \in W_0^{1,q}(\Omega)$
	be the unique solution of $-\laplace v = w \in W^{-1,q}(\Omega)$.
	Since $v$ is harmonic in a neighborhood of $\omega_1$,
	we have $v \in C^1(\omega_1)$ due to Weyl's lemma, see \cite[Proposition~2.18]{Ponce}.
	Consequently, an application of the closed graph theorem
	to the operator
	$L^1(\Lambda) \ni w \mapsto v \in \set{v \in W_0^{1,q}(\Omega) \given v_{| \omega_1} \in C^1(\omega_1) }$
	implies that this operator is bounded, in particular,
	$\norm{v}_{C^1(\omega_1)} \le C \norm{w}_{L^1(\Lambda)}$.
	We denote this operator by
	$S_\flat \colon L^1(\Lambda) \to C^1(\omega_1)$.
	Applying a similar argument with $C^2(\omega_1)$
	together with the Arzelà--Ascoli theorem
	shows that $S_\flat$ is compact.

	Now, we additionally assume
	$w \in L^{q'}(\Lambda) \subset W^{-1,q'}(\Omega)$
	which implies
	that the above solution $v$
	satisfies $v = (-\laplace)^{-1} w \in W_0^{1,q'}(\Omega)$.
	Consequently,
	\begin{align*}
		\int_\Lambda y w \d\lambda^d
		=
		\int_\Omega \nabla y \nabla v \d\lambda^d
		=
		\int_{\omega_1} v \d u
		&\le
		\norm{v}_{C^1(\omega_1)} \norm{u}_{C^1(\omega_1)\dualspace}
		\\&
		\le
		C \norm{w}_{L^1(\Lambda)} \norm{u}_{C^1(\omega_1)\dualspace}
		.
	\end{align*}
	Since $w \in L^{q'}(\Lambda)$ was arbitrary,
	this implies
	\begin{equation*}
		\norm{y}_{L^\infty(\Lambda)}
		\le
		C \norm{u}_{C^1(\omega_1)\dualspace}
	\end{equation*}
	which is the desired continuity
	of
	$C^1(\omega_1)\dualspace \ni u \mapsto y_{|\Lambda} \in L^\infty(\Lambda)$.
	Let us denote by $S \colon C^1(\omega_1)\dualspace \to L^\infty(\Lambda)$
	the associated solution operator.
	Note that the above calculation also yields
	that $S$ is the adjoint of $S_\flat$.
	Consequently, the reduced objective
	$\FF \colon C^1(\omega_1)\dualspace \to \R$
	defined via
	\begin{equation*}
		\FF(u)
		=
		\int_\Lambda a(x, (S u)(x)) \d\lambda^d
	\end{equation*}
	is $C^2$ and its derivatives are given by
	\begin{equation*}
		\FF'(u) v
		=
		\int_\Lambda \frac{\partial}{\partial y} a(x, y(x)) (S v)(x) \d\lambda^d
		=
		\int_{\omega_1} p v \d\lambda^d
	\end{equation*}
	and
	\begin{equation*}
		\FF''(u)[v_1, v_2]
		=
		\int_\Lambda \frac{\partial}{\partial y} a(x, y(x)) (S v_1)(x) (S v_2)(x) \d\lambda^d
		,
	\end{equation*}
	where $y = S u$ is the state and the adjoint state $p \in W_0^{1,q'}(\Omega)$
	satisfies
	$-\laplace p = \chi_\Lambda \frac{\partial}{\partial y} a(y)$.
	Arguing as above, we find $p \in C^1(\omega_1)$.

	Now, it is easy to check that \cref{assu:F_second_order_Taylor} is satisfied.
	Indeed,
	let $\bar u \in \MM(\omega_1)$ be given
	and denote by $\bar p$ the associated adjoint state.
	Then, the above observations yield the regularity
	$\FF'(\bar u) = \bar p_{|\omega_1} \in C^1(\omega_1)$.
	Moreover, the differentiability assumption \eqref{eq:taylor2}
	follows since $\FF$ is twice continuously differentiable on $C^1(\omega_1)\dualspace$.
	Finally,
	$C^1(\omega_1)\dualspace \ni h \mapsto \FF''(\bar u)h^2$
	is sequentially weak-$\star$ continuous,
	since $C^1(\omega_1)\dualspace \ni h \mapsto S h \in L^\infty(\Lambda)$
	is sequentially weak-$\star$-to-strong continuous,
	since $S$ is the adjoint of the compact operator $S_\flat$.
	Hence, we have verified \cref{assu:F_second_order_Taylor}.
	Consequently, if \cref{assu:weaker_regularity_assumptions_on_T}
	is satisfied by $\bar\varphi = -\frac{2}{\alpha}\bar p$,
	\cref{thm:SNC} is applicable
	and the second-order condition \eqref{eq:Phi''_positive}
	characterizes the quadratic growth \eqref{eq:quadratic_growth}.
\end{example}

Next, we briefly argue that these arguments do not apply for the control of a semilinear equation
even if the observation domain and the control domain are separated.
\begin{remark}
	\label{rem:semilinear control}
	Let us consider the control problem
	\begin{equation}
		\label{eq:optctrl_semilin}
		\begin{aligned}
			\min & \quad \frac12 \int_\Lambda \abs{y - y_d}^2 \d \lambda^d + \frac{\alpha}{2} D(u) \\
			\text{s.t.} & \quad y \in W^{1,q}_0(\Omega), \quad u \in \MM(\omega_1),\\
			& \quad - \laplace y + g(y) = u \quad \text{in } W^{-1,q}(\Omega),
		\end{aligned}
	\end{equation}
	similar to \cref{ex:nonlin_ctrl}, but including a semilinearity $g \in C^2(\R)$.
	Under some mild assumptions on $g$,
	one can prove that the control-to-state map $S \colon \MM(\omega_1) \to W_0^{1,q}(\Omega)$,
	$q \in (1,d/(d-1))$,
	is well defined and $C^2$, see \cite[Theorem~2.1 and 2.2]{CasasKunisch2014}.
	We denote by $F \colon \MM(\omega_1) \to \R$ the associated reduced objective.
	From the above work we obtain the expressions
	\begin{subequations}
		\label{eq:zweiteablF_zum_zweiten}
		\begin{align}
			\label{eq:zweiteablF_zum_zweiten:1}
			F''(u)[h_1, h_2]
			&= \int_{\Lambda} \eta_1 \eta_2 \d\lambda^d +  \int_\Lambda (y - y_d) w \d\lambda^d \\
			\label{eq:zweiteablF_zum_zweiten:2}
			&= \int_{\Lambda}  \eta_1 \eta_2 \d\lambda^d - \int_{\Omega} p  g''(y) \eta_1 \eta_2 \d \lambda^d,    
		\end{align}
	\end{subequations}
	where $y = S(u)$, while $\eta_i = S'(u) h_i$ and $w = S''(u)[h_1,h_2]$ are the solutions of
	\begin{subequations}
		\label{eq:lin_eq}
		\begin{align}
			- \laplace \eta_i + g'(y) \eta_i &= h_i & & \text{in } W^{-1,q}(\Omega), \label{eq:lin_eq:1}\\
			- \laplace w + g'(y) w &= - g''(y) \eta_1\eta_2  & & \text{in } W^{-1,q}(\Omega), \label{eq:lin_eq:2}
		\end{align}
	\end{subequations}
	and $p$ is the solution of the adjoint equation
	\begin{equation}
		\label{eq:adjeq_Zwei}
		- \laplace p + g'(y) p = \chi_\Lambda (y - y_d) \quad \text{in } W^{-1,q'}(\Omega)
		.
	\end{equation}
	Let us argue that $F''(u)[\cdot, \cdot]$ is, in general, not bounded w.r.t.\ the norm of $C^1(\omega_1)\dualspace$.

	Fix $x \in \omega_1$ and consider the function $h \in C^1(\omega_1)\dualspace$,
	$\dual{h}{\varphi} = \frac{\partial\varphi}{\partial x_1}(x)$.
	Then, one can check that $(-\laplace)^{-1} h$
	has a singularity in the neighborhood of $x$ which, essentially, coincides
	with $\frac{\partial \Phi}{\partial x_1}(\cdot - x)$,
	where $\Phi \colon \R^d \setminus \set{0} \to \R$ is the fundamental solution.
	In particular, $(-\laplace)^{-1} h$ is not $L^2$-integrable in a neighborhood of $x$.
	Similarly, we expect that the solution $\eta$ of \eqref{eq:lin_eq:1}
	(if it exists) has a similar singularity.
	With $h_1 = h_2 = h$, the term $\eta^2$ which appears in \eqref{eq:zweiteablF_zum_zweiten:2}
	and \eqref{eq:lin_eq:2}
	is not even integrable.
	That is, the expressions in \eqref{eq:zweiteablF_zum_zweiten}, \eqref{eq:lin_eq}
	cannot be used to extend $F''(u)$
	to $C^1(\omega_1)\dualspace$.
	Moreover, the above $h$ can be approximated by
	a sequence $\seq{h_k} \subset L^2(\omega_1)$
	such that $h_k \weaklystar h$ in $C^1(\omega_1)\dualspace$
	via, e.g., a mollification argument.
	Consequently, the sequence of associated linearized states
	$\eta_k := S'(u) h_k$
	cannot be uniformly $L^2$-integrable in a neighborhood of $x$,
	since they converge in a weak sense towards $\eta = S'(u) h$,
	which is not locally $L^2$-integrable.
	Now, we additionally assume that the expression
	$p g''(y)$, which appears in \eqref{eq:zweiteablF_zum_zweiten:2},
	is uniformly positive (or negative)
	in a neighborhood of $x$.
	This implies that $F''(u) h_k^2$ diverges as $k \to \infty$.
	Consequently, $F''(u) \colon \MM(\omega_1)^2 \to \R$
	is not bounded w.r.t.\ the norm of $C^1(\omega_1)\dualspace$.
	Hence, we expect that, in general, \cref{assu:F_second_order_Taylor} is not satisfied for this problem.

	Without going into details, we mention two possible modifications of \eqref{eq:optctrl_semilin}
	under which \cref{assu:F_second_order_Taylor} can be satisfied again.
	First, one could replace the nonlinearity $g(y)$ in the state equation by $\chi_\Lambda g(y)$.
	Under this modification, the problematic term $g''(y) \eta_1 \eta_2$
	appearing in \eqref{eq:zweiteablF_zum_zweiten:2} and \eqref{eq:lin_eq:2}
	becomes $\chi_\Lambda g''(y) \eta_1 \eta_2$.
	Since the right-hand side of the linearized equation \eqref{eq:lin_eq:1}
	only lives on $\omega_1$ one can show, similar to \cref{ex:nonlin_ctrl},
	that $\eta_i$ enjoys higher regularity on $\Lambda$.
	Consequently, $\chi_\Lambda g''(y) \eta_1 \eta_2$ is integrable and everything is fine again.

	Second, we could replace the right-hand side $u$ in the state equation by $B u$,
	where $B$ is a linear operator satisfying some suitable smoothing properties.
	If, for example, $B \colon C^1(\omega_1)\dualspace \to \MM(\omega_1)$ is bounded,
	the control-to-state operator becomes $u \mapsto S(B u)$,
	where $S$ is the solution operator of the semilinear equation.
	Since $S$ is $C^2$ on $\MM(\omega_1)$, the operator $S \circ B$
	is $C^2$ on $C^1(\omega_1)\dualspace$
	and, consequently, \cref{assu:F_second_order_Taylor} applies.
\end{remark}

\begin{remark}
	It would of course be desirable to verify the conditions in \cref{assu:F_second_order_Taylor} for the 
	case of an optimal control problem
	governed by a semilinear equation
	as in \cref{rem:semilinear control}.
	Our considerations, however, show that this is not possible if the space $X$ in the abstract second-order analysis 
	is chosen to be $C^1(\omega_1)\dualspace$. Instead a more regular space is needed for this.  
	On the other hand, \cref{ex:counter_example_inequality} shows that the non-degeneracy condition \eqref{NDC} 
	is in general not true in more regular spaces,
	especially in case of
	$X = W^{1,q'}(\omega_1)\dualspace = W^{-1,q}(\omega_1)$ with $q \geq d / (d - 1)$.
	It remains an open question, if \eqref{NDC} can be verified for the transportation distance, if $q$ is chosen to 
	be smaller than $d/(d-1)$. If this is the case, then it should be possible to verify \cref{assu:F_second_order_Taylor}
	for the control problem from \cref{rem:semilinear control}. This, however, is subject to future research.
\end{remark}

\printbibliography

@article{WachsmuthWachsmuth2022,
  author        = {Daniel Wachsmuth and Gerd Wachsmuth},
  title         = {Second-order conditions for non-uniformly convex integrands: quadratic growth in {$L^1$}},
  year          = {2022},
  doi = {10.46298/jnsao-2022-8733},
  publisher = {Centre pour la Communication Scientifique Directe ({CCSD})},
  volume = {3},
  journal = {Journal of Nonsmooth Analysis and Optimization}
}

@online{WachsmuthWalter2024,
  author        = {Gerd Wachsmuth and Daniel Walter},
  title         = {No-gap second-order conditions for minimization problems in spaces of measures},
  year          = {2024},
  eprint        = {2403.12001},
  eprinttype    = {arXiv}
}

@article {JK95,
    AUTHOR = {Jerison, David and Kenig, Carlos E.},
     TITLE = {The inhomogeneous {D}irichlet problem in {L}ipschitz domains},
   JOURNAL = {J. Funct. Anal.},
    VOLUME = {130},
      YEAR = {1995},
    NUMBER = {1},
     PAGES = {161--219},
      ISSN = {0022-1236,1096-0783},
       DOI = {10.1006/jfan.1995.1067},
}

@book{Ponce,
	author 		= {Ponce, Augusto C.},
	title 			= {Elliptic {PDEs}, Measures and Capacities},
	fseries 		= {EMS Tracts in Mathematics},
	series 		= {EMS Tracts Math.},
	volume 		= {23},
	year  		= {2016},
	publisher 		= {Z{\"u}rich: European Mathematical Society (EMS)},
	doi 			= {10.4171/140},
}

@BOOK{santambrogio,
 title = {Optimal Transport for Applied Mathematicians: Calculus of Variations, PDEs, and Modeling},
 ISBN = {9783319208282},
 ISSN = {2374-0280},
 DOI = {10.1007/978-3-319-20828-2},
 journal = {Progress in Nonlinear Differential Equations and Their Applications},
 publisher = {Springer International Publishing},
 author = {Santambrogio, Filippo},
 year = {2015}
}

@ARTICLE{PieperVexler2013,
 title = {A Priori Error Analysis for Discretization of Sparse Elliptic Optimal Control Problems in Measure Space},
 volume = {51},
 ISSN = {1095-7138},
 DOI = {10.1137/120889137},
 number = {4},
 journal = {SIAM Journal on Control and Optimization},
 publisher = {Society for Industrial & Applied Mathematics (SIAM)},
 author = {Pieper, Konstantin and Vexler, Boris},
 year = {2013},
 pages = {2788–2808}
}

@article {ClasonKunisch2011,
    AUTHOR = {Clason, Christian and Kunisch, Karl},
     TITLE = {A duality-based approach to elliptic control problems in
              non-reflexive {B}anach spaces},
   JOURNAL = {ESAIM Control Optim. Calc. Var.},
    VOLUME = {17},
      YEAR = {2011},
    NUMBER = {1},
     PAGES = {243--266},
       DOI = {10.1051/cocv/2010003},
}

@article {CasasKunisch2014,
    AUTHOR = {Casas, Eduardo and Kunisch, Karl},
     TITLE = {Optimal control of semilinear elliptic equations in measure
              spaces},
   JOURNAL = {SIAM J. Control Optim.},
    VOLUME = {52},
      YEAR = {2014},
    NUMBER = {1},
     PAGES = {339--364},
       DOI = {10.1137/13092188X},
}

@article{BrediesPikkarainen2013,
    AUTHOR = {Bredies, Kristian and Pikkarainen, Hanna Katriina},
     TITLE = {Inverse problems in spaces of measures},
   JOURNAL = {ESAIM Control Optim. Calc. Var.},
    VOLUME = {19},
      YEAR = {2013},
    NUMBER = {1},
     PAGES = {190--218},
       DOI = {10.1051/cocv/2011205},
}

@BOOK{AubinFrankowska2008,
 title = {Set-Valued Analysis},
 ISBN = {9780817648473},
 ISSN = {2197-1811},
 DOI = {10.1007/978-0-8176-4848-0},
 journal = {Modern Birkhäuser Classics},
 publisher = {Birkhäuser Boston, MA},
 author = {Aubin, Jean-Pierre and Frankowska, Hélène},
 year = {2008}
}

@article{Rockafellar1971,
author = "Rockafellar, R. T.",
journal = "Pacific Journal of Mathematics",
number = "2",
pages = "439--469",
publisher = "Pacific Journal of Mathematics, A Non-profit Corporation",
title = "Integrals which are convex functionals. II.",
url = "https://projecteuclid.org:443/euclid.pjm/1102969571",
volume = "39",
year = "1971"
}

@ARTICLE{ChristofWachsmuth2019,
 title = {Differential Sensitivity Analysis of Variational Inequalities with Locally Lipschitz Continuous Solution Operators},
 volume = {81},
 ISSN = {1432-0606},
 DOI = {10.1007/s00245-018-09553-y},
 number = {1},
 journal = {Applied Mathematics \& Optimization},
 publisher = {Springer Science and Business Media LLC},
 author = {Christof, Constantin and Wachsmuth, Gerd},
 year = {2019},
 pages = {23–62}
}

@ARTICLE{BorchardWachsmuth2024,
 title = {Second-order conditions for spatio-temporally sparse optimal control via second subderivatives},
 volume = {5},
 ISSN = {2700-7448},
 DOI = {10.46298/jnsao-2024-12604},
 journal = {Journal of Nonsmooth Analysis and Optimization},
 publisher = {Centre pour la Communication Scientifique Directe (CCSD)},
 author = {Borchard, Nicolas and Wachsmuth, Gerd},
 year = {2024}
}

@online{MeyerWachsmuth2025,
  author        = {Christian Meyer and Gerd Wachsmuth},
  title         = {Optimal control of the Poisson equation with transport regularization: Properties of optimal transport plans and transport map},
  year          = {2025},
  eprint        = {2506.02808},
  eprinttype    = {arXiv}
}

@BOOK{Brezis2011,
	author = {Brezis, Haim},
	title = {Functional analysis, {S}obolev spaces and partial differential
		equations},
	series = {Universitext},
	publisher = {Springer, New York},
	year = {2011},
	isbn = {978-0-387-70913-0},
	doi = {10.1007/978-0-387-70914-7},
}

@BOOK{AmbrosioGigliSavare2005,
 author = {Ambrosio, Luigi and Gigli, Nicola and Savaré, Giuseppe},
 title = {Gradient flows in metric spaces and in the space of probability measures},
 year = {2005},
 publisher = {Basel: Birkhäuser},
 doi = {10.1007/b137080}
}

@online{WachsmuthWalter2025:1,
  author        = {Gerd Wachsmuth and Daniel Walter},
  title         = {Proximal gradient methods in Banach spaces},
  year          = {2025},
  eprint        = {2509.24685},
  eprinttype    = {arXiv}
}

@ARTICLE{DeckelnickHinze2012,
    AUTHOR = {Deckelnick, Klaus and Hinze, Michael},
     TITLE = {A note on the approximation of elliptic control problems with
              bang-bang controls},
   JOURNAL = {Comput. Optim. Appl.},
    VOLUME = {51},
      YEAR = {2012},
    NUMBER = {2},
     PAGES = {931--939},
      ISSN = {0926-6003},
       DOI = {10.1007/s10589-010-9365-z},
}

@ARTICLE{WachsmuthWachsmuth2009,
	title                    = {Convergence and Regularization Results for Optimal Control Problems
		with Sparsity Functional},
	author                   = {Gerd Wachsmuth and Daniel Wachsmuth},
	journal                  = {ESAIM: Control, Optimisation and Calculus of Variations},
	year                     = {2011},
	number                   = {3},
	pages                    = {858--886},
	volume                   = {17},
	doi                      = {10.1051/cocv/2010027}
}

@online{Caja-LopezDelgadinoKitagawa2026,
  author        = {Caja-Lopez, Francisco-Unai and Matias G. Delgadino and Jun Kitagawa},
  title         = {Stability of optimal transport maps and second variation of the 2-Monge-Kantorovich distance},
  year          = {2026},
  eprint        = {2605.24232},
  eprinttype    = {arXiv}
}

@ARTICLE{ChristofWachsmuth2018,
	title= {No-Gap Second-Order Conditions via a Directional Curvature Functional},
	author= {Christof, Constantin and Wachsmuth, Gerd},
	journal = {SIAM Journal on Optimization},
	volume = {28},
	number = {3},
	pages = {2097-2130},
	year = {2018},
	doi = {10.1137/17M1140418},
}

@BOOK{SchaeferWolff1999,
 title = {Topological Vector Spaces},
 ISBN = {9781461214687},
 ISSN = {0072-5285},
 DOI = {10.1007/978-1-4612-1468-7},
 journal = {Graduate Texts in Mathematics},
 publisher = {Springer New York},
 author = {Schaefer, H. H. and Wolff, M. P.},
 year = {1999}
}

@ARTICLE{GoldbergKampowskyTroeltzsch1992,
	title                    = {On {N}emytskij operators in {$L_p$}-spaces of abstract functions},
	author                   = {Goldberg, Hyman and Kampowsky, Winfried and Tr{\"o}ltzsch, Fredi},
	journal                  = {Mathematische Nachrichten},
	year                     = {1992},
	pages                    = {127--140},
	volume                   = {155},
	doi                      = {10.1002/mana.19921550110},
	issn                     = {0025-584X}
}

@ARTICLE{Do1992,
	author = {Chi Ngoc Do},
	TITLE = {Generalized second-order derivatives of convex functions in reflexive {B}anach spaces},
	JOURNAL = {Transactions of the American Mathematical Society},
	VOLUME = {334},
	YEAR = {1992},
	NUMBER = {1},
	PAGES = {281--301},
	DOI = {10.2307/2153983}
}

@BOOK{RockafellarWets1998,
	doi = {10.1007/978-3-642-02431-3},
	year = 1998,
	publisher = {Springer Berlin Heidelberg},
	author = {Ralph Tyrrell Rockafellar and Roger J. B. Wets},
	title = {Variational Analysis}
}

@ARTICLE{CasasTroeltzsch2012,
 title = {Second Order Analysis for Optimal Control Problems: Improving Results Expected From Abstract Theory},
 volume = {22},
 ISSN = {1095-7189},
 DOI = {10.1137/110840406},
 number = {1},
 journal = {SIAM Journal on Optimization},
 publisher = {Society for Industrial & Applied Mathematics (SIAM)},
 author = {Casas, Eduardo and Tröltzsch, Fredi},
 year = {2012},
 pages = {261–279}
}

@article{Casas2012,
	Title = {Second Order Analysis for Bang-Bang Control Problems of {PDEs}},
	Author = {Eduardo Casas},
	Journal = {SIAM Journal on Control and Optimization},
	Year = {2012},
	Number = {4},
	Pages = {2355--2372},
	Volume = {50},
	DOI = {10.1137/120862892}
}

@ARTICLE{CasasWachsmuthWachsmuth2017,
	title = {Sufficient Second-Order Conditions for Bang-Bang Control Problems},
	author                   = {Eduardo Casas and Daniel Wachsmuth and Gerd Wachsmuth},
	journal                  = {SIAM Journal on Control and Optimization},
	year                     = {2017},
	volume = {55},
	number = {5},
	pages = {3066-3090},
	doi = {10.1137/16M1099674},
}

@ARTICLE{Levy1993,
 title = {Second-order epi-derivatives of integral functionals},
 volume = {1},
 ISSN = {1572-932X},
 DOI = {10.1007/bf01027827},
 number = {4},
 journal = {Set-Valued Analysis},
 publisher = {Springer Science and Business Media LLC},
 author = {Levy, A. B.},
 year = {1993},
 pages = {379–392}
}

@article {DuvalPeyre2015,
    AUTHOR = {Duval, Vincent and Peyr\'{e}, Gabriel},
     TITLE = {Exact support recovery for sparse spikes deconvolution},
   JOURNAL = {Found. Comput. Math.},
    VOLUME = {15},
      YEAR = {2015},
    NUMBER = {5},
     PAGES = {1315--1355},
       DOI = {10.1007/s10208-014-9228-6},
}

@article {ClasonKunisch2012,
    AUTHOR = {Clason, Christian and Kunisch, Karl},
     TITLE = {A measure space approach to optimal source placement},
   JOURNAL = {Comput. Optim. Appl.},
     VOLUME = {53},
      YEAR = {2012},
    NUMBER = {1},
     PAGES = {155--171},
         DOI = {10.1007/s10589-011-9444-9},
}

@ARTICLE{NeitzelPieperVexlerWalter2019,
 title = {A sparse control approach to optimal sensor placement in PDE-constrained parameter estimation problems},
 volume = {143},
 ISSN = {0945-3245},
 DOI = {10.1007/s00211-019-01073-3},
 number = {4},
 journal = {Numerische Mathematik},
 publisher = {Springer Science and Business Media LLC},
 author = {Neitzel, Ira and Pieper, Konstantin and Vexler, Boris and Walter, Daniel},
 year = {2019},
 month = Sept, pages={943–984}
}

@INBOOK{ScherzerWalch2009,
 title = {Sparsity Regularization for Radon Measures},
 ISBN = {9783642022562},
 ISSN = {1611-3349},
 DOI = {10.1007/978-3-642-02256-2_38},
 booktitle = {Scale Space and Variational Methods in Computer Vision},
 publisher = {Springer Berlin Heidelberg},
 author = {Scherzer, Otmar and Walch, Birgit},
 year = {2009},
 pages = {452–463}
}

@online{WachsmuthWachsmuth2025:1,
  author        = {Daniel Wachsmuth and Gerd Wachsmuth},
  title         = {Continuous differentiability of the signum function and Newton's method for bang-bang control},
  year          = {2025},
  eprint        = {2509.24829},
  eprinttype    = {arXiv}
}

\end{document}